\newcommand{\gpgp}[2]{
\mathcal{G}_{\ifthenelse{\isempty{#1}}{\ell}{#1}}(\overline{\mathbb{F}_{\ifthenelse{\isempty{#2}}{p}{#2}}})
}
\definecolor{cacolor}{RGB}{0,206,209}
\definecolor{jocolor}{RGB}{255,140,0}
\definecolor{jacolor}{RGB}{0,0,205}
\definecolor{kracolor}{RGB}{70,130,180}
 \definecolor{sacolor}{RGB}{147,112,219}
 \definecolor{trcolor}{RGB}{199,21,133}
  \DeclareSymbolFont{cyrletters}{OT2}{wncyr}{m}{n}
  \DeclareMathSymbol{\Sha}{\mathalpha}{cyrletters}{"58}
  \newcommand{\Q}{\mathbb{Q}}
  \newcommand{\Z}{\mathbb{Z}}
  \newtheorem{theorem}{Theorem}[section]
  \newtheorem{lemma}[theorem]{Lemma}
  \newtheorem{cor}[theorem]{Corollary}
  \newtheorem{prop}[theorem]{Proposition}
  \newtheorem{definition}[theorem]{Definition}
  \newtheorem{remark}[theorem]{Remark}
  \newtheorem{example}[theorem]{Example}
  \newcommand{\p}{\mathfrak{p}}
  \newcommand{\Cl}{\operatorname{Cl}}
  \newcommand{\OO}{\mathcal{O}}
  \renewcommand{\mod}{\, \operatorname{mod} \,}
  \newcommand{\Res}{\operatorname{Res}}
  \newcommand{\F}{\mathbb{F}}
  \newcommand{\End}{\operatorname{End}}
  \newcommand{\FpGraph}{{\mathcal{G}_\ell(\mathbb{F}_p)}}
  \newcommand{\FpSubGraph}{\mathcal{S}}
  \newcommand{\FpBarGraph}{{\mathcal{G}_\ell(\overline{\mathbb{F}_p})}}
\newcommand{\FpBarGraphtwo}{{\mathcal{G}_2(\overline{\mathbb{F}_p})}}
\newcommand{\FpGraphtwo}{{\mathcal{G}_2(\mathbb{F}_p})}
\newcommand{\FpGraphthree}{{\mathcal{G}_3(\mathbb{F}_p})}
\newcommand{\FpBarGraphthree}{{\mathcal{G}_3(\overline{\mathbb{F}_p})}}
\newcommand{\jp}{{\mathbf{j}}}
\newcommand{\jpp}{{\mathbcal{j}}}
\newcommand{\FF}{\mathbb{F}}
\newcommand{\QQ}{\mathbb Q}
\newcommand{\Fp}{\mathbb{F} _p}
\newcommand{\Glfp}{\mathcal{G}_{\ell}(\mathbb F_p)}
\newcommand{\Fpbar}{\overline{\mathbb{F}}_p}
\newcommand{\Fptwo}{\mathbb F_{p^2}}
\title{Adventures in Supersingularland}
\author{Sarah Arpin, Catalina Camacho-Navarro,
Kristin Lauter,\\ 
Joelle Lim, Kristina Nelson, Travis Scholl, Jana Sot\'akov\'a}
\date{September 2019}
\begin{document}

\maketitle

\begin{center}
    \textit{Dedicated to Alice Silverberg}
\end{center}

\begin{abstract}
   In this paper, we study isogeny graphs of supersingular elliptic curves. Supersingular isogeny graphs were introduced as a hard problem into cryptography by Charles, Goren, and Lauter for the construction of cryptographic hash functions (\cite{CGL06}). These are large expander graphs, and the hard problem is to find an efficient algorithm for routing, or path-finding, between two vertices of the graph. We consider four aspects of supersingular isogeny graphs, study each thoroughly and, where appropriate, discuss how they relate to one another. 
   
First, we consider two related graphs that help us understand the structure: the `spine' $\mathcal{S}$, which is the subgraph of $\mathcal{G}_\ell(\overline{\mathbb{F}_p})$ given by the $j$-invariants in $\mathbb{F}_p$, and the graph $\mathcal{G}_\ell(\mathbb{F}_p)$, in which both curves and isogenies must be defined over $\mathbb{F}_p$. We show how to pass from the latter to the former. The graph $\FpSubGraph$ is relevant for cryptanalysis because routing between vertices in $\mathbb{F}_p$ is easier than in the full isogeny graph. The $\mathbb{F}_p$-vertices are typically assumed to be randomly distributed in the graph, which is far from true. We provide an analysis of the distances of connected components of $\mathcal{S}$.

Next, we study the involution on $\mathcal{G}_\ell(\overline{\mathbb{F}_p})$ that is given by the Frobenius of $\mathbb{F}_p$ and give heuristics on how often shortest paths between two conjugate $j$-invariants are preserved by this involution (mirror paths). We also study the related question of what proportion of conjugate $j$-invariants are $\ell$-isogenous for $\ell = 2,3$. We conclude with experimental data on the diameters of supersingular isogeny graphs when $\ell = 2$ and compare this with previous results on diameters of LPS graphs and random Ramanujan graphs.

\end{abstract}

\newpage

\tableofcontents

\newpage 
\section{Introduction}
Supersingular Isogeny Graphs have been the subject of recent study due to their significance in recently proposed post-quantum cryptographic protocols. In 2006, Charles, Goren, and Lauter proposed a hash function based on the hardness of finding paths ({\it routing}) in supersingular isogeny graphs \cite{CGL06}. A few years later, Jao, De Feo, and Plut proposed a key exchange based on supersingular isogeny graphs \cite{DFJP11}. The security of most cryptographic systems currently deployed today relies on either the hardness of factoring large integers of a certain form or the hardness of computing discrete logarithms in certain abelian cyclic groups. Both problems can be efficiently solved using Shor's algorithm on a quantum computer which can handle large scale computation \cite{shor1999polynomial}. In 2015, NIST announced a contest to standardize cryptographic algorithms that are not known to be broken by quantum computers. Now in its second round, SIKE (\url{https://sike.org/}, based on supersingular isogeny graphs) is still in the running for the next public key exchange standard.

While there are no known classical or quantum attacks that break the cryptographic protocols that use supersingular isogeny graphs, the graphs themselves have been relatively unstudied until recently. More study is needed before we can confidently recommend protocols which rely on the difficulty of the hard problem of finding paths in supersingular isogeny graphs.

For distinct primes $p$ and $\ell$, let $\FpBarGraph$ denote the graph whose vertices consist of isomorphism classes of supersingular elliptic curves over $\Fpbar$ and whose edges correspond to isogenies of degree $\ell$ defined over $\Fpbar$. The vertices can be labelled with the $j$-invariant of the curve, which is an $\Fpbar$-isomorphism invariant. For $p \equiv 1 \pmod{12}$ the graph $\FpBarGraph$ is known to be a $\ell+1$-regular Ramanujan graph, and is one of two known families of Ramanujan graphs. 

In this paper, we study two related graphs to help understand the structure of $\FpBarGraph$. First, the full subgraph of $\FpBarGraph$ consisting of only vertices $j\in\Fp$: We denote this subgraph by $\FpSubGraph$ and call it the \textit{spine}, which is new terminology. Second, we look at the graph $\FpGraph$ whose vertices are elliptic curves up to $\Fp$-isomorphism and edges are $\Fp$-isogenies of degree $\ell$, already studied by Delfs and Galbraith (\cite{DelGal01}). As we will need to be specific about the field of definition, we use $j$ to denote a general $j$-invariant, $\jp$ to denote a $j$-invariant in $\Fp$, and $\jpp$ to denote a $j$-invariant in $\mathbb{F}_{p^2}\setminus\Fp$. Note that if two elliptic curves $\Fp$ are twists of each other, then they share the same $j$-invariant. A more formal discussion of the relationship between these graphs can be found in Section 2. 

There have been several approaches tried so far to attack cryptographic protocols based on supersingular isogeny graphs.  One of them uses the quaternion analogue of the graph and presents an efficient algorithm for navigating between maximal orders \cite{KLPT}.  This approach leads to the results presented 
in~\cite{EHLMP} showing that the hardness of the path finding problem is essentially equivalent to the hardness of computing endomorphism rings of supersingular elliptic curves.
One of the other methods considered so far uses the structure of the $\FpGraph$ (\cite{DelGal01}). Better quantum algorithms are known for navigating between $\Fp$-points, so paths to these points are of particular interest. 

In Section 3 of this paper, we compare $\FpGraph$ and the spine $\FpSubGraph$.  The main results of Section 3 show how the components of $\FpGraph$ (which look like volcanoes) fit together to form the spine when passing to the full graph $\FpBarGraph$.  We define the notions of {\it stacking, folding, and attaching} to describe how $\FpGraph$ becomes the spine, when isogenies defined over $\overline{\mathbb{F}_p}$ are added and $j$-invariants which are twists are identified.  In particular for $\ell=2$, Theorem~\ref{theorem:stacking-folding-attaching} shows that only stacking, folding, or at most one  attachment by a new edge are possible to form the spine. Theorem~\ref{theorem:sfa_ell_3} gives an analogous description for $\ell=3$.  For any fixed $\ell$ and $p$,  the resulting shape of the spine depends on the congruence class of $p$, the structure of the class group $\Cl(\OO_K)$, where $K=\QQ(\sqrt{-p})$, and the behavior of the prime above $\ell$ in the class group of $K$, and we show how to determine it explicitly.
In Section~\ref{subsection:distances_of_components} we generate experimental data to study how the components of the spine are distributed throughout the graph and we estimate how many components there are in the spine.

Another important property of Supersingular Isogeny Graphs is that they have an involution which fixes the spine, and which sends a $\jpp$-invariant in $\F_{p^2}$ to its Galois conjugate $\jpp^p$.  If a $\jpp$-invariant in $\F_{p^2}$ has a short path to the spine, then the involution can be applied to that path to produce a path from $\jpp$ to its conjugate $\jpp^p$.  We call such paths {\it mirror-paths}.
In Section \ref{sec:dist-conj-pairs} we study the distance between Galois conjugate pairs of vertices, that is, pairs of $\jpp$-invariants of the form $\jpp$, $\jpp^p$. Our data suggests these vertices are closer to each other than a random pair of vertices in $\FpBarGraphtwo$. In Section \ref{sec:shortest-paths-through-spine} we test how often the shortest path between two conjugate vertices goes through the spine $\FpSubGraph$, or equivalently, contains a $j$-invariant in $\mathbb F_p$. We find conjugate vertices are more likely than a random pair of vertices to be connected by a shortest path through the spine. Finally, we examine the distance between arbitrary vertices and the spine $\FpSubGraph$ in Section \ref{sec:dist-to-Fp}. 

Section 5 provides heuristics on how often conjugate $\jpp$-invariants are $\ell$-isogenous for $\ell = 2,3$, a question motivated by the study of mirror paths provided in Section 4.

Another known family of Ramanujan graphs are certain Cayley graphs constructed by Lubotzky-Philips-Sarnak (LPS graphs \cite{LPS}).  The relationship between LPS graphs and supersingular isogeny graphs is studied in~\cite{WIN4}.  Sardari \cite{Sardari} provides an analysis of the diameters of LPS graphs, and in Section 6 of this paper, we provide heuristics and a discussion of the diameters of supersingular $2$-isogeny graphs.

Our experiments and data suggest a noticeable difference in the Supersingluar Isogeny Graphs $\mathcal{G}_\ell(\overline{\mathbb{F}_p})$ depending on the congruence class of the prime $p \pmod{12}$. It has been known since the introduction of Supersingular Isogeny Graphs into cryptography~\cite{CGL06} that the congruence class of the prime $p$ has an important role to play in the properties of the graph.  In particular it was shown there how the existence of short cycles in the graph depends explicitly on the congruence conditions on $p$.  In this paper, we extend this observation and find significant differences in the graphs depending on the congruence class of $p$.
In summary, the data seems to suggest the following:
\begin{itemize}
    \item $p\equiv 1,7\mod{12}$: 
    \begin{itemize}
        \item smaller $2$-isogeny graph diameters (Section \ref{subsec:diameters_mod_12}),
        \item larger number of spinal components (Section \ref{sec:num-comp}),
        \item larger proportion of $2$-isogenous conjugate pairs (Section \ref{sec:conjugate_pairs_mod_12}),
    \end{itemize}
    
    \item $p\equiv 5,11\mod{12}$: 
    \begin{itemize}
        \item larger $2$-isogeny graph diameters,
        \item smaller number of spinal components,
        \item smaller proportion of $2$-isogenous conjugate pairs.
    \end{itemize}
\end{itemize}

To accompany the experimental results of this paper, we have made the  {\tt Sage}\nocite{sage} code for all the computations available, along with a short discussion of the different algorithms included. The code is posted at

\url{https://github.com/krstnmnlsn/Adventures-in-Supersingularland-Data}.

\section*{Acknowledgements}

This paper is the result of a collaboration started at Alice Silverberg's 60th birthday conference on open questions in cryptography and number theory \url{https://sites.google.com/site/silverberg2018/}.
We would like to thank Heidi Goodson for her significant contributions to this project. We are grateful to Microsoft Research for hosting the authors for a follow-up visit. We would like to thank Steven Galbraith, Shahed Sharif, and Katherine E. Stange for helpful conversations. 
Travis Scholl was partially supported by Alfred P. Sloan Foundation grant number G-2014-13575. Catalina Camacho-Navarro was partially supported by Universidad de Costa Rica.

\section{Definitions}

\begin{definition}
An elliptic curve $E$ is a smooth, projective algebraic curve of genus 1 with a fixed point, denoted $\OO_E$.
\end{definition}

Elliptic curves have a group law, which makes them particularly rich objects to work with. For more background on this, see \cite{AEC}.  Elliptic curves defined over fields of characteristic $p<\infty$ come in two flavors: ordinary and supersingular. In the graphs of elliptic curves considered here, the ordinary and supersingular components are disjoint. We focus on the graph of supersingular elliptic curves, defined here as in Theorem 3.1 of \cite{AEC}:
\begin{definition}[Supersingular Elliptic Curve]
Let $E/K$ be an elliptic curve, $\operatorname{char}(K) = p<\infty$. We say that $E$ is supersingular if any of the following equivalent conditions hold:
\begin{enumerate}[(i)]
    \item The $p^k$-torsion of $E$ is trivial for all $k\in\mathbb{Z}_{k\geq1}$.
    \item The multiplication by $p$-map on $E$ is purely inseparable and the $j$-invariant of $E$ is in $\mathbb{F}_{p^2}$
    \item The endomorphism ring of $E$ is isomorphic to a maximal order in a quaternion algebra.
\end{enumerate}
Elliptic curves which are not supersingular are called ordinary.
\end{definition}
By definition, supersingular elliptic curves can all be identified with a $j$-invariant in $\mathbb{F}_{p^2}$. The $j$-invariants classify the $\Fpbar$-isomorphism classes of supersingular elliptic curves. When we consider supersingular elliptic curves with $j(E)\in\Fp$ up to $\Fp$-isomorphism, two $\Fp$-isomorphism classes will have a single $j$-invariant (\cite[Prop.~2.3]{DelGal01}). One can distinguish between the two curves for instance by considering Weierstrass models of the elliptic curves.

Whenever the field of definition of the $j$-invariant is relevant and not clear from context, we will use the following notation: 
\begin{itemize}
    \item $\jp$ denotes a $j$-invariant in $\Fp$
    \item $\jpp$ denotes a $j$-invariant in $\mathbb{F}_{p^2}\setminus\Fp$.
    \end{itemize}
Otherwise, or for a general $j$-invariant, we denote it simply by $j$.

A characterizing difference between the endomorphism rings of $\Fp$ $j$-invariants and $\Fptwo\setminus\Fp$ $j$-invariants is pointed out in \cite[Prop.~2.4]{DelGal01}: for $p>3$, a supersingular elliptic curve $E/\Fpbar$ is defined over $\Fp$ if and only if $\mathbb{Z}[\sqrt{-p}]\subset\End(E)$.

\subsection{Isogeny Graphs}
There are three graphs to consider. To introduce these graphs, we borrow the following notions from Sutherland \cite[Section 2.2]{Sut13}. We denote by $\Phi_{\ell}[X,Y]$ the $\ell$-modular polynomial. This is a polynomial of degree $\ell+1$ in both $X$ and $Y$, symmetric in $X$ and $Y$ and such that there exists a cyclic $\ell$-isogeny $\phi:E(j_1)\to E(j_2) $ if and only if $\Phi_{\ell}(j_1,j_2)=0$. For $\ell$ prime, all isogenies are cyclic.

In principle, the modular polynomials can be computed and they are accessible via tables for small values of $\ell$, however, their coefficients are rather large, as we see already for $\phi_2(X,Y)$:
\begin{equation}\label{phi2}
\begin{split}
 \Phi_2(X,Y) & =  -X^{2} Y^{2} + X^{3} + 1488 X^{2} Y + 1488 X Y^{2} + Y^{3} - 162000 X^{2} + 40773375 X Y  \\
    & - 162000 Y^{2} + 8748000000 X + 8748000000 Y - 157464000000000
\end{split}
\end{equation}

\begin{definition}[Supersingular $\ell$-isogeny graph over $\Fpbar$: $\FpBarGraph$]
\label{def:Fpbarsupersingular_graph}
The $\FpBarGraph$ graph has vertex set consisting of the $\Fpbar$-isomorphism classes of supersingular elliptic curves over $\Fpbar$, labeled by their $j$-invariants over $\mathbb{F}_{p^2}$. The directed edges from a vertex $j$ correspond to $(j,j')$ where $j'$ is a root of the modular polynomial $\Phi_\ell(j,Y)$. 
\end{definition}

Except possibly at vertices corresponding to $\jp=0,1728$, this is defines an $\ell+1$ regular graph. 
These graphs are known to be Ramanujan graphs (see \cite{CGL06} or \cite{WIN4}).

\begin{definition}[Spine: $\FpSubGraph$]
\label{def:spine}
The $\Fp$ spine, denoted $\FpSubGraph$, is the full subgraph of $\FpBarGraph$ consisting of all vertices with $j$-invariants defined over $\Fp$ and all their edges in $\FpBarGraph$. 
\end{definition}

The number of vertices in $\FpSubGraph$ can be determined from \cite{Cox}
$$
\#\FpSubGraph = \begin{cases}
\frac{1}{2}h(-4p) & \text{ if } p\equiv 1\pmod{4}\\
h(-p) & \text{ if } p\equiv 7\pmod{8}\\
2h(-p) & \text{ if } p\equiv 8\pmod{8}.
\end{cases}
$$
where $h(d)$ is the class number of the imaginary quadratic field $\QQ(\sqrt{d})$.

\begin{definition}[Supersingular $\ell$-isogeny graph over $\Fp$: $\FpGraph$]
\label{def:Fpgraph}
The $\FpGraph$ has vertex set $\Fp$-isomorphism classes of $j$-invariants $\jp\in\Fp$. The edges correspond to $\ell$-isogenies defined over $\Fp$ as well. As noted before, each $j$-invariant will appear as two distinct vertices in this graph.
\end{definition}

\begin{remark}
\label{remark:SpineAndFpDifferences}
It is worthwhile to highlight the differences between $\FpSubGraph$ and $\FpGraph$:
\begin{itemize}
    \item $\FpSubGraph$ has fewer vertices than $\FpGraph$, since the vertices are considered up to $\Fpbar$-isomorphism in the former and $\Fp$-isomorphism in the later.
    \item $\FpSubGraph$ has (likely) more edges than $\FpGraph$, since we consider the edges defined over $\Fpbar$ in the former but only those defined over $\Fp$ in the later. The ``appearance" of these edges when we move from $\FpGraph$ to $\FpSubGraph$ will be discussed more thoroughly in the sequel. 
\end{itemize}
\end{remark}

\begin{remark}
\label{Remark:UndirectedGraphs}

Note that we can consider these graphs can be considered to be un-directed except at the $j$-invariants $\jp = 0,1728$: Every $\ell$-isogeny $\phi: E\to E'$ has a dual $\widehat{\phi}:E'\to E$ of the same degree. The only issues we run into with $\jp=0,1728$ are the extra automorphisms of these curves can compose with the isogenies, affecting the regularity of the graphs at these vertices. We can still consider the graph to be undirected at these vertices, but we will not preserve the multiplicity of the edges with this relaxation.
\end{remark}

\begin{definition}\label{def:equivalent_isog}
We say two isogenies $\phi: E_1 \to E_2$ and $\phi': E_1' \to E_2'$, are equivalent over $k$ if there exist isomorphisms $\varphi: E_1' \to E_1$ and $\psi: E_2 \to E_2'$ over $k$ such that $\phi' = \psi \circ \phi \circ \varphi$.
\end{definition}

Let us make some remarks about $\FpBarGraph$. By definition, this is an $\ell+1$ regular graph, where we can associate an edge $\{j_1,j_2\}$ to an equivalence class of isogenies between two elliptic curve $E_1$ and $E_2$ with $j_1=j(E_1)$ and $j_2=j(E_2)$. Kohel \cite[Chapter 7]{Kohel} proved that very pair of supersingular elliptic curves are connected by a chain of degree $\ell$ isogenies, which implies that the graph is connected. If $p>3$, the number of vertices of $\FpBarGraph$ is

$$\left\lfloor\frac{p}{12}\right\rfloor +\begin{cases}
0 & \text{ if } p\equiv 1\pmod{12},\\
1 & \text{ if } p\equiv 5,7\pmod{12},\\
2 & \text{ if } p\equiv 11\pmod{12}.
\end{cases}
$$

(See \cite[Section V.4]{AEC}.) The congruence condition follows from whether or not the $j$-invariants 0 and 1728 are supersingular or not. 

\begin{remark}
The graph $\FpBarGraph$ is a component (called the supersingular component) of the general $\ell$-isogeny graph where the vertices also include the ordinary $j$-invariants and $\ell$-isogenies between them. Isogenies preserve the properties of ``being ordinary" and ``being supersingular", so these vertices do not mix on connected components of the full $\ell$-isogeny graph.
\end{remark}

It is natural to consider connections between these three graphs. Moving from $\FpGraph$ to $\FpSubGraph$ identifies vertices with the same $j$-invariant and adds edges. To move from $\FpSubGraph$ to $\FpBarGraph$, we can consider adding $\jpp$-invariants in conjugate pairs: starting with $\jp$ in $\FpSubGraph$, if there is an isogeny from $E(\jp)$ to $E(\jpp)$, there is a conjugate isogeny from $E(\jp)$ to the conjugate $E(\jpp)^{(p)}$. 

Indeed, this works for any two $j$-invariants $j, j'$:
if $j$ and $ j'$ satisfy $
    \Phi_\ell(j,j') =0 $ then also 
\begin{align*}
    \Phi_\ell(j^p, (j')^p) = (\Phi_\ell(j, j') )^p = 0
\end{align*} because $\Phi_\ell $ has integer coefficients. This means that for any edge $[j,j']\in \FpBarGraph$, there is a \textit{mirror} edge $[j^p, (j')^p]$. 
Constructing the graph from this perspective leads to the idea of a mirror involution on $\FpBarGraph$: 

\begin{definition}
\label{def:mirror_involution}
If $j$ is a supersingular $j$-invariant, so is its $\Fptwo$-conjugate $j^p$ (in the case that $j = \jp\in \Fp$, $\jp^p = j$). If there is an $\ell$-isogeny $\phi: E(j_1) \to E(j_2)$ then there exists an $\ell$-isogeny $\phi':E(j_1)^{(p)} \to E(j_2)^{(p)}$. This implies that the $(p)$-power Frobenius map on $\Fptwo$ gives an involution on $\FpBarGraph$. We call this the \textit{mirror involution}. 
\end{definition}

The mirror involution fixes the $\Fp$-vertices of the graph.

\begin{definition}\label{def:mirror_path}
We say that a path $P$ with vertices $\{j_0,j_1,j_2,\ldots, j_{n-1}, j_n\}$ (considered as an undirected path) is a mirror path if it is invariant under the mirror involution.
\end{definition}

There exists at least one mirror path between any two conjugate $j$-invariants. One way to find such a mirror path is to find a path from one $j$-invariant, say $\mathbcal{j}_0$, to an $\mathbb{F}_p$ $j$-invariant. Then, conjugate that path to connect with the conjugate of $\mathbcal{j}_0$, which we denote $\mathbcal{j}_0^p$. In summary, a path of the form:
\[\mathbcal{j}_0\to \mathbcal{j}_1\to \cdots \to \mathbcal{j}_n\to  \mathbf{j} \to \mathbcal{j}_n^p\to \cdots \to \mathbcal{j}_1^p \to \mathbcal{j}_0^p \]

Another possibility is for a mirror path between conjugate $\jpp$-invariants to pass through a pair of isogenous conjugate $\jpp$-invariants:

\[\mathbcal{j}_0\to \mathbcal{j}_1\to \cdots \to \mathbcal{j}_n\to \mathbcal{j}_n^p\to \cdots \to \mathbcal{j}_1^p \to \mathbcal{j}_0^p \]

\subsection{Special $j$-invariants} \label{subsection:special_j_invariants}

In this section, we establish a few general facts about $j$-invariants that require special attention.

First of all, there are $j$-invariants corresponding to curves with extra automorphisms that result in the undirectedness of the graph (see Remark \ref{Remark:UndirectedGraphs}). It is a standard fact that $j=1728$ is supersingular if and only if $p \equiv 3 \mod 4$ and $j = 0$ is supersingular if and only if $p \equiv 2 \mod 3$. The computation can be easily argued by CM theory (see, for instance, \cite{IGUSA}). The main idea used is that the $j$-invariant of an elliptic curve $E$ with CM by a quadratic order $\mathcal{O}$ generates the ring class field of $\mathcal{O}$ and such a curve reduces to a supersingular curve modulo $p$ if and only if $p$ is inert in $\mathcal{O}$.

\begin{example}
\label{prop:8000}
Elliptic curves with $j$-invariant $j=8000$ are supersingular over $\Fp$ if and only if $p\equiv 5,7\mod 8$.
\end{example}
\begin{proof}
The number field $\mathbb{Q}(\sqrt{-2})$ has Hilbert class polynomial $x - 8000$, meaning that the $j$-invariant $j=8000$ has CM by $\mathbb{Q}(\sqrt{-2})$. This $j$-invariant will be supersingular whenever $p$ is inert in $\mathbb{Q}(\sqrt{-2})$. Hence we only need to compute $\left(\frac{-2}{p}\right) = -1$, which gives the congruence conditions.
\end{proof}

\begin{example}
\label{prop:-3375}
Elliptic curves with $j$-invariant $j = -3375$ are supersingular over $\Fp$ if and only if $p\equiv 3,5,6\mod{7}$.

\end{example}

\begin{proof}
The Hilbert class polynomial of $\mathbb{Q}(\sqrt{-7})$ is $x+3375$. Hence $j = -3375$ will be supersingular over $\mathbb{F}_p$ whenever $p$ is inert in $\mathbb{Q}(\sqrt{-7})$. The rest follows from evaluating $\left(\frac{-7}{p}\right) = -1$.
\end{proof}

\subsubsection{Self-isogenies}
\label{subsubsec:SelfIsogenies}

Next we turn our attention to the self-loops in the graph $\FpBarGraph$, which are easily read off from the factorization of $\Phi_\ell(X,X)$ as follows:
A $j$-invariant $j$ admits a self-isogeny if and only if $\Phi_\ell(j,j)=0$.

For instance, consider the modular polynomial $\Phi_2(X,Y)$ (as seen in (\ref{phi2})). Now, $\Phi_2(X,X)$ factors  over $\Z$ as 
\begin{equation}\label{eq:self-loops-for-1728}
\Phi_2(X,X) = -(X + 3375)^2(X - 1728)(X - 8000),    
\end{equation}

therefore, the only loops in $\FpBarGraphtwo$ are at the following vertices:
\begin{itemize}
    \item $\jp=-3375$ has two loops, 
    \item $\jp = 1728$ has one loop (we will show where this loop comes from in Example \ref{cor:endo_of_1728}),
    \item $\jp= 8000$ has one loop. 
\end{itemize}
In particular, no $j$-invariant over $\Fptwo\setminus \Fp$ has a self isogeny. Note that these $j$-invariants may not be supersingular, so they may not appear on $\FpBarGraphtwo$ for every $p$.

\subsubsection{Double edges}

We use the following general lemma about double edges in the $2$-isogeny graph. Note that this lemma applies mutatis mutandis for ordinary curves, replacing $\FpBarGraph$ by the $\ell$-isogeny graph of ordinary elliptic curves.

\begin{lemma}[Double edge lemma] \label{lemma:double_edge_lemma}
If two $j$-invariants $j_1,j_2$ in the $\ell$-isogeny graph $\FpBarGraph$  have a double-edge between them, then they are roots of the polynomial
\begin{align} \label{polynomial:double_edges}
  \Res_\ell(X) :=  \operatorname{Res}\left( \Phi_
\ell(X,Y), \frac{d}{dY}\Phi_\ell(X,Y); \,\, Y .
\right) 
\end{align} which is a polynomial of degree bounded by $2 \ell \cdot (2 \ell - 1)$.
\end{lemma}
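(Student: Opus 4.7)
The plan is to unpack the meaning of a double edge into a polynomial condition on $\Phi_\ell$ and then invoke the basic vanishing property of resultants.

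First, by Definition~\ref{def:Fpbarsupersingular_graph}, the edges emanating from $j_1$ in $\FpBarGraph$ correspond to the roots of $\Phi_\ell(j_1, Y) \in \Fpbar[Y]$, counted with multiplicity. So the assumption that $j_1$ and $j_2$ are joined by a double edge is precisely the statement that $Y = j_2$ is a root of $\Phi_\ell(j_1, Y)$ of multiplicity at least $2$, which is equivalent to the pair of conditions
\[
\Phi_\ell(j_1, j_2) = 0, \qquad \frac{\partial \Phi_\ell}{\partial Y}(j_1, j_2) = 0.
\]
Hence the two polynomials $\Phi_\ell(j_1, Y)$ and $\partial_Y \Phi_\ell(j_1, Y)$, viewed in $\Fpbar[Y]$, share the common root $j_2$, and the standard resultant principle forces $\Res_\ell(j_1) = 0$.

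Next, to show $\Res_\ell(j_2) = 0$ as well, I use that a double edge is a symmetric relation on its endpoints: the duals of two inequivalent $\ell$-isogenies $\phi_1, \phi_2 \colon E(j_1) \to E(j_2)$ are two inequivalent $\ell$-isogenies $\widehat{\phi}_1, \widehat{\phi}_2 \colon E(j_2) \to E(j_1)$, so $Y = j_1$ is a double root of $\Phi_\ell(j_2, Y)$. Applying the previous argument with the roles of $j_1$ and $j_2$ swapped yields $\Res_\ell(j_2) = 0$. Alternatively, one can invoke the symmetry $\Phi_\ell(X,Y) = \Phi_\ell(Y,X)$ directly to transfer the vanishing conditions.

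For the degree bound, I would appeal to the classical fact -- visible from Kronecker's congruence $\Phi_\ell(X,Y) \equiv (X^\ell - Y)(X - Y^\ell) \pmod{\ell}$ -- that $\Phi_\ell$ has total degree $2\ell$. Consequently $\partial_Y \Phi_\ell$ has total degree at most $2\ell - 1$, and the standard B\'ezout-type bound $\deg_X \Res_Y(f, g) \leq (\deg f)(\deg g)$ for bivariate polynomials (via a size estimate on the Sylvester matrix) gives $\deg \Res_\ell(X) \leq 2\ell(2\ell - 1)$. The main obstacle, such as it is, is only bookkeeping: cleanly matching up the three equivalent formulations "double edge", "common root of $\Phi_\ell$ and $\partial_Y \Phi_\ell$", and "vanishing of the resultant", and noting the symmetry of the first in its two endpoints; once these are aligned the lemma is immediate.
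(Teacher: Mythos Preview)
Your proof is correct and follows essentially the same approach as the paper: translate the double edge into $j_2$ being a repeated root of $\Phi_\ell(j_1,Y)$, deduce that $\Phi_\ell(j_1,Y)$ and $\partial_Y\Phi_\ell(j_1,Y)$ share a root, and conclude via the resultant, with the degree bound coming from the total degrees $2\ell$ and $2\ell-1$. You are slightly more thorough than the paper in explicitly arguing that $j_2$ is also a root of $\Res_\ell$ (via symmetry), whereas the paper's proof only spells out the case of $j_1$.
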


\begin{proof}[Proof of the double edge lemma.] 
Suppose that $j_1$ and $j_2$ are two vertices in the $\ell$-isogeny graph connected with a double edge. Considered as a polynomial in $Y$, this means
\[\Phi_\ell(j_1,Y) = (Y - j_2)^2\cdot g(j_1, Y)\]
for some $g(j_1, Y)\in\overline{\mathbb{F}}_{p}[Y]$. The derivative $$\frac{d}{dY}\Phi_\ell(j_1,Y)  $$with respect to $Y$ then also vanishes at $Y = j_2 $. 
This means that the polynomials $\Phi_\ell(X,Y)$ and $\frac{d}{dY}\Phi_\ell(X,Y) $ share a root when plugging in $X = j_1$. But this means that $j_1$ is a root of the resultant 
$$    \operatorname{Res}\left( \Phi_
\ell(X,Y), \frac{d}{dY}\Phi_\ell(X,Y); \,\, Y .
\right). $$

Since the total degree of $\Phi_\ell(X,Y)$ is $2 \ell$ and the total degree of $ \frac{d}{dY}\Phi_\ell(X,Y);$ is $2 \ell - 1$ and the resultant of two polynomials $P(X,Y)$ and $Q(X,Y)$ of total degrees $d$ and $e$ has generically degree $d\cdot e$, we obtain the bound 
\begin{align*}
    \# \{ j: \text{ there is a double edge from }j \} \leq 2\ell \cdot (2 \ell -1). \qedhere \end{align*}  
\end{proof}

The bound in the lemma is not tight as we will see in the following corollary for $\ell = 2$.

\begin{cor}[Double edges for $\ell = 2$] \label{cor:double_edges_for_ell_2}
If $\ell = 2$ and there is a double edge from $j$ in $\FpBarGraphtwo$, then the $j$-invariant $j$ is in the following list:
\begin{itemize}
    \item $\jp = 0$, 
    \item $\jp = 1728$,
    \item $\jp = -3375$,
    \item $j$ is a root of $X^2 + 191025X - 121287375$.
\end{itemize}

Moreover, at $\jp = 0$ we obtain a triple edge and $\jp= 0$ is the only $j$-invariant that admits a triple edge in $\FpBarGraphtwo$.
\end{cor}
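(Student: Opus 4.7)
The plan is to specialize the Double Edge Lemma (Lemma \ref{lemma:double_edge_lemma}) to $\ell = 2$ using the explicit expression for $\Phi_2(X,Y)$ from equation (\ref{phi2}), and then carry out a symbolic resultant computation. First I would compute
\[
\tfrac{\partial}{\partial Y}\Phi_2(X,Y) = -2X^2 Y + 1488 X^2 + 2976 X Y + 3Y^2 + 40773375 X - 324000 Y + 8748000000,
\]
and then
\[
\Res_2(X) \;=\; \operatorname{Res}\!\left(\Phi_2(X,Y),\, \tfrac{\partial}{\partial Y}\Phi_2(X,Y);\, Y\right)
\]
using a computer algebra system (Sage, Magma, or PARI/GP). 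This is a polynomial in $X$ of degree at most $12$. I would then factor it over $\mathbb{Z}$; the factorisation should produce the linear factors $X$, $X - 1728$, $X + 3375$, together with the quadratic $X^2 + 191025 X - 121287375$ (with appropriate multiplicities, and possibly a unit scalar). By Lemma \ref{lemma:double_edge_lemma}, any $j$ admitting a double edge in $\FpBarGraphtwo$ must be a root of $\Res_2(X)$, and since $\Res_2$ has integer coefficients, its roots in $\overline{\mathbb{F}_p}$ are exactly the reductions of its roots in characteristic zero; this yields the claimed list of candidates in every characteristic.

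For the triple-edge claim at $j = 0$, I would substitute $X = 0$ directly into $\Phi_2$ and obtain
\[
\Phi_2(0, Y) \;=\; Y^3 - 162000\, Y^2 + 8748000000\, Y - 157464000000000,
\]
then verify by matching coefficients that this polynomial equals $(Y - 54000)^3$. Thus whenever $0$ (equivalently $54000$) is supersingular, there is a triple edge at $\jp = 0$.

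For the uniqueness of the triple edge, I would observe that a triple edge at $j$ is equivalent to $\Phi_2(j, Y)$ being a perfect cube, i.e., to the linear polynomial $\tfrac{\partial^2}{\partial Y^2}\Phi_2(j, Y) = -2j^2 + 2976\, j + 6Y - 324000$ having its unique root coincide with the triple root of $\Phi_2(j, Y)$. Since the list of double-edge candidates has already been determined, it suffices to substitute each remaining candidate into $\Phi_2(j, Y)$ and check the factorisation: for $j = 1728$ one finds three distinct roots, for $j = -3375$ one finds a double root (consistent with the two self-loops at $-3375$ identified in (\ref{eq:self-loops-for-1728})) but no triple root, and for the two roots of $X^2 + 191025 X - 121287375$ one finds $\Phi_2(j, Y)$ with a repeated but not triply-repeated factor. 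Equivalently, one can confirm this in one stroke by computing $\operatorname{Res}(\Phi_2(X,Y), \tfrac{\partial^2}{\partial Y^2}\Phi_2(X, Y); Y)$ and checking that its only root in common with $\Res_2(X)$ is $X = 0$.

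The main obstacle is purely computational rather than conceptual: the coefficients of $\Phi_2$ are huge, so the resultant and the verification that $\Phi_2(0,Y) = (Y-54000)^3$ should be done symbolically in a CAS; no genuine mathematical subtlety arises.
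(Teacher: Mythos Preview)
Your approach for the first part is essentially the same as the paper's: compute and factor $\Res_2(X)$ explicitly and invoke Lemma~\ref{lemma:double_edge_lemma}. The paper records the factorisation
\[
\Res_2(X) = (-1)\cdot 2^2 \cdot X^2 \cdot (X-1728)\cdot (X+3375)^2\cdot (X^2+191025X-121287375)^2,
\]
and likewise factors $\Phi_2(0,Y) = (Y-54000)^3$ to exhibit the triple edge at $\jp=0$.

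Where you diverge is in the uniqueness of the triple edge. You propose case-checking each candidate (or equivalently taking a second resultant with $\partial_Y^2\Phi_2$). The paper instead gives a one-line graph-theoretic argument: away from $\jp=0,1728$ the graph $\FpBarGraphtwo$ is $3$-regular and undirected, so a triple edge at any $j$ would make $\{j,j'\}$ an isolated two-vertex component, contradicting connectedness of $\FpBarGraphtwo$. This is cleaner and avoids any worry about special collisions in characteristic $p$; your computational route is valid in principle but more laborious.

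One factual slip in your case analysis: $\Phi_2(1728,Y)$ does \emph{not} have three distinct roots; it factors as $(Y-1728)(Y-287496)^2$, so $\jp=1728$ has a self-loop and a \emph{double} edge to $287496$. This does not damage your conclusion (there is still no triple edge there), but you should correct the statement.
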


\begin{proof}
By lemma \ref{lemma:double_edge_lemma}, double edges in $\FpBarGraphtwo$ can only occur at the roots of
\begin{equation}\label{eq:resultantPhi2andDeriv}
  \Res_2(X) =   \left(-1\right) \cdot 2^{2} \cdot X^{2} \cdot (X - 1728) \cdot (X + 3375)^{2} \cdot (X^{2} + 191025 X - 121287375)^{2}
\end{equation}
For the $j$-invariants $0, 1728$ and $-3375$, we identify the double edges by factoring $
    \Phi_2(j, X) .$
\begin{enumerate}
    \item For $\jp=0$, we have
    \begin{align*}
        \Phi_2(0, X) = (X - 54000)^{3}.
    \end{align*}
    There are three outgoing $2$-isogenies from $\jp=0$ to $\jp = 54000$.
    
    \item For $\jp= 1728$, we have
    \begin{align*}
        \Phi_2(1728,X)= (X - 1728) \cdot (X - 287496)^{2}
    \end{align*}
    there is always a self-$2$-isogeny (explained further in \ref{cor:endo_of_1728}) and two $2$-isogenies to $\jp = 287496$. These are defined over $\Fp$ for any $p$: from the model $y^2 = x^3 -x$, they are given by maps
    \begin{align*}
     &    \left(\frac{x^{2} + x + 2}{x + 1}, \frac{x^{2} y + 2 x y - y}{x^{2} + 2 x + 1}\right)  \text{ and }   \left(\frac{x^{2} - x + 2}{x - 1}, \frac{x^{2} y - 2 x y - y}{x^{2} - 2 x + 1}\right).
    \end{align*}

     \item For $\jp = -3375$, we have
     \begin{align*}
         \Phi_2(-3375, X) = (X - 16581375) \cdot (X + 3375)^{2}
     \end{align*} and so there are always two self-$2$-isogenies.
\end{enumerate}

We also note that $j = 0$ is the only $\jp$-invariant that can admit a triple edge. Indeed, since away from the vertices $1728$ and $0$ we can think of the graph as being undirected with $3$ edges from every vertex, having a triple edge would mean having two isolated vertices in $\FpBarGraphtwo$. This is not possible.
\end{proof}

The double-edges from Corollary \ref{cor:double_edges_for_ell_2} appear in the supersingular $2$-isogeny graph only when these $j$-invariants are supersingular. 

\begin{remark}
The factors of the polynomial $\Res_2(X)$ (as seen in  \eqref{eq:resultantPhi2andDeriv}) are Hilbert class polynomials for imaginary quadratic fields. This is to be expected: A double edge $[j_1, j_2]$ is a $2$-cycle of non-dual $2$-isogenies (not equal to the multiplication map $[2]$). The ring $\End_{\Fp}(E_{J_1})$ has an non-trivial element of norm $4$ corresponding to this 2-cycle. The only quadratic imaginary fields that contain an element of norm $4$ are $\Q(\sqrt{-1}), \Q(\sqrt{-3}), \Q(\sqrt{-7}), \Q(\sqrt{-15}) $. \end{remark}

\begin{remark} \label{remark:double_edges_large_ell} The above remark generalizes for any $\ell$: 
the polynomial \begin{align*}
    \Res_\ell(X)
\end{align*} is a product of (ring) class polynomials of quadratic orders containing a nontrivial element of norm $\ell^2$.
\end{remark}

Short cycles are considered carefully in Section 6 of \cite{CGL06}: a sufficient condition so that there are no cycles of length $2$ in $\FpBarGraphtwo$ is $p\equiv 1\mod{420}$. 

We will use Lemma \ref{lemma:double_edge_lemma}  together with Lemma \ref{lemma:one-two-isogeny} that if there is an edge in $\FpBarGraphtwo$  between two $j$-invariants $\jp_1, \jp_2 \in \Fp$ that corresponds to an isogeny which is not defined over $\Fp$ (i.e., which is not an edge in $\FpGraph$), then there is a double edge between $\jp_1$ and $\jp_2$ in $\FpBarGraphtwo$, and hence $\jp_1$ and $\jp_2$ are among the values listed in Lemma \ref{lemma:double_edge_lemma}.

\section{Structure of the $\Fp$-subgraph: the spine $\FpSubGraph$}

In this section, we investigate the shape of the spine $\FpSubGraph$, which was defined in \ref{def:spine} to be the subgraph of $\FpBarGraph$ consisting of the vertices defined over $\Fp$. 
The motivation for studying the structure of this subgraph is the existence of attacks on SIDH that work by finding $\Fp$ $\jp$-invariants: the idea for such a possible attack was presented in~\cite{DelGal01}, and a quantum attack based on this idea was given by Biasse, Jao and Sankar~\cite{BJS}. See also \cite{GPSTOnTheSecurity} for an overview of the security considerations for SIDH.

In Section \ref{volcano}, we start with the graph $\FpGraph$, the structure of which is understood well. 
It was studied in depth by Delfs and Galbraith in \cite{DelGal01}, on which we base our investigations. Moreover, a version of the $\Fp$-graph (allowing edges corresponding to $\ell$-isogenies for multiple primes) has also been proposed for post-quantum cryptography in \cite{CSIDH}.
We recall the results of \cite{DelGal01} in some detail and give a few explicit examples of their results about endomorphisms of certain elliptic curves (for instance, Example \ref{example:j=0_always_on_the_floor}).

In Section \ref{subsec:folding-stacking-attaching}, we discuss how the Spine $\FpSubGraph$ can be obtained from $\FpGraph$ in two steps: first vertices corresponding to the same $j$-invariant are identified and then a few new edges are added. The possible ways the connected components can identify are given by Definition \ref{def:stacking} and we call them \textit{stacking, folding, attaching along a $j$-invariant} and \textit{attaching by a new edge}.  

In Section \ref{sec:stacking_folding_attaching_large} we study stacking, folding and attaching for $\ell >2$ and give an example of the theory we develop for $\ell = 3$ in Section \ref{sec:stacking_folding_attaching_3}. In this section we also give a complete description of stacking, folding and attaching in Theorem \ref{theorem:sfa_ell_3}. 
We return to the case $\ell =2$ in \ref{subsec:stacking_folding_attaching_2}, giving a similarly complete theorem in \ref{theorem:stacking-folding-attaching}, and some data on how often attachment happens. Section \ref{subsection:distances_of_components} contains some experimental data on the distances between the connected components of $\FpSubGraph \subset \FpGraphtwo$.

\subsection{Structure of the {$\Fp$}-Graph {$\FpGraph$}}
\label{volcano}

\subsubsection{Preliminaries} \label{subsection:volcano}

To understand the spine $\FpSubGraph$ (Definition \ref{def:spine}), we look at $\FpGraph$ (Definition \ref{def:Fpgraph}). Recall that the vertices of $\FpGraph$ are all supersingular elliptic curves defined over $\Fp$, up to $\Fp$-isomorphism, and the edges in $\FpGraph$ are isogenies defined over $\Fp$. Keep in mind the differences between $\FpSubGraph$ and $\FpGraph$, highlighted in Remark \ref{remark:SpineAndFpDifferences}.

To see how many vertices of $\FpGraph$ correspond to the same $j$-invariant, we look at twists of elliptic curves. By Proposition 5.4 of \cite{AEC}[Chapter X], for $\jp \neq 0, 1728$, the set of twists is isomorphic to (assuming $p > 3$)
\begin{align*}
    \Fp^*/ (\Fp^*)^2 \cong \Z / 2\Z,
\end{align*} so there are two vertices corresponding to the same $j$-invariant $\jp$. 

Similarly, for $\jp = 1728$, the set of twists is isomorphic to \begin{align*}
    \Fp^* / (\Fp^*)^4 .
\end{align*}
The $j$-invariant $\jp = 1728$ is supersingular if and only if $p \equiv 3 \mod 4$ (equivalently, $p-1 \equiv 2 \mod 4$), so $(\Fp^*)^4 = (\Fp^*)^2$. Hence, there are two vertices of $\FpGraph$ corresponding to $\jp=1728$, as well. These vertices correspond to quartic twists, rather than quadratic twits, which we will use in Example \ref{cor:endo_of_1728}.

For $\jp = 0$, the set of twists is isomorphic to
\begin{align*}
     \Fp^* / (\Fp^*)^6 .
\end{align*}
We know that $\jp=0$ is supersingular if and only if $p \equiv 2 \mod 3$ (equivalently $p- 1 \equiv 1 \mod 3$), so $(\Fp^*)^6 = (\Fp^*)^2$. Hence there are also two vertices of $\FpGraph$ corresponding to $\jp =0$.

The structure of $\Glfp$ is explained in \cite{DelGal01}. They show that the $\FpGraph$ graph looks very similar to an isogeny graph of ordinary curves. Upon recalling some of the main definitions, we present a simplified version of their construction results here, restricting many general results to the case $\ell = 2$.

Let $K := \Q(\sqrt{-p})$. Start with the  definition of supersingularity for elliptic curves over $\Fp$: an elliptic curve $E/ \Fp$ is supersingular if and only if $\Z[\sqrt{-p}] \subset \End_{\Fp}(E)$. If $p \equiv 3 \mod 4$, the order $\Z[\sqrt{-p}]$ is contained in the maximal order $\Z \left[ \frac{1+\sqrt{-p}}{2} \right]$. To distinguish between these two possible endomorphism rings, we have the following definitions.

\begin{definition}[Surface and Floor] \label{surfacefloor} Let $E$ be a supersingular elliptic curve over $\Fp$. We say $E$ is on the surface (resp. $E$ is on the floor) if $End_{\Fp}(E) = \mathcal O_K$ (resp. $End_{\Fp}(E) = \Z[\sqrt{-p}]$). For $p \equiv 1 \mod 4$, surface and floor coincide.
\end{definition}

\begin{definition}[Horizontal and Vertical Isogenies.] \label{horizontal}
Let $\varphi$ be an $\ell$-isogeny between supersingular elliptic curves $E$ and $E'$ over $\Fp$. If $\End_{\Fp}(E) \cong \End_{\Fp}(E')$ then $\varphi$ is called horizontal. Otherwise, if $E$ is on the floor and $E'$ is on the surface, or vice versa, $\varphi$ is called vertical.\\
\end{definition}

The following is Theorem 2.7 of \cite{DelGal01}. We revisit this result in Sections \ref{subsection:1mod4} and \ref{sec:p3mod4}.

\begin{theorem}\label{theorem:volcanos}
Let $p > 3$ be a prime.

\begin{enumerate}
    \item For $\ell > 2$, there are two horizontal isogenies from any vertex and there are no vertical isogenies, provided $\left( \frac{-p}{\ell} \right) =1$, otherwise there are no $\ell$-isogenies. Hence every connected component of $\FpGraph$ is a cycle.
    
    \item Case $p \equiv 1 \mod 4$. There is one level in $\FpGraphtwo$: all elliptic curves $E$ have $\End_{\Fp}(E) = \Z[\sqrt{-p}]$. For $\ell=2$: from each vertex there is one outgoing $\Fp$-rational $2$-isogeny. 
    
    There are $h(-4p)$ vertices on the surface (which coincides with the floor).

\item Case $p \equiv 3 \mod 4$. There are two levels in $\FpGraphtwo$: surface and floor. 
For $\ell = 2$:
\begin{enumerate}
\item If $p \equiv 7 \mod 8$, there is exactly one vertical isogeny from any vertex on the surface to a vertex on the floor, every vertex on the surface admits two horizontal isogenies and there are no horizontal isogenies between the curves on the floor.

There are $h(-p)$ vertices on the floor and $h(-p)$ vertices on the surface.

\item If $p \equiv 3 \mod 8$, from every vertex on the surface, there are exactly three vertical isogenies to the floor, and there are no horizontal isogenies between any vertices.

There are $3 \cdot h(-p)$ vertices on the floor and $h(-p)$ vertices on the surface.
\end{enumerate}
\end{enumerate}
\end{theorem}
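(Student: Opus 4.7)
The plan is to reduce the classification of $\Fp$-isogenies to the arithmetic of orders in $K=\Q(\sqrt{-p})$ and then apply an $\Fp$-version of the Deuring correspondence, essentially giving a volcano-type description as in the ordinary case. First I would use the characterization recalled just after Definition~2.2 that $E/\Fp$ is supersingular iff $\Z[\sqrt{-p}]\subset \End_{\Fp}(E)\subset \OO_K$; since there are no orders strictly between $\Z[\sqrt{-p}]$ and $\OO_K$, the only options are $\End_{\Fp}(E)=\OO_K$ (``surface'') and $\End_{\Fp}(E)=\Z[\sqrt{-p}]$ (``floor''). These coincide precisely when $\Z[\sqrt{-p}]=\OO_K$, i.e.\ when $-p\equiv 2,3\pmod 4$, i.e.\ when $p\equiv 1\pmod 4$ (giving case (2)); when $p\equiv 3\pmod 4$ the index $[\OO_K:\Z[\sqrt{-p}]]$ equals $2$, yielding the two-level structure of case (3).

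Next I would translate $\Fp$-rational $\ell$-isogenies from $E$ into kernel ideals in the relevant orders, exactly as in Kohel's thesis and the Delfs--Galbraith setup. The horizontal $\ell$-isogenies out of $E$ correspond to invertible proper ideals of norm $\ell$ in $\End_{\Fp}(E)$, and there are $1+\left(\frac{\disc}{\ell}\right)$ of them (two if $\ell$ splits, one if it ramifies, zero if inert). The vertical isogenies are governed by the order containment: an $\ell$-isogeny can change the endomorphism ring only if $\ell$ divides the conductor $[\OO_K:\Z[\sqrt{-p}]]$, which is $1$ or $2$. Ascending $\ell$-isogenies from a non-maximal order are unique (they correspond to the unique ``canonical lift'' direction), while descending isogenies fill in whatever portion of the $\ell+1$ total degree-$\ell$ isogenies is not accounted for by horizontal ones.

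With this dictionary in place, the cases fall out by computing Kronecker symbols and class numbers. In case (1) with $\ell>2$, the conductor is coprime to $\ell$, so every $\ell$-isogeny is horizontal, and there are $1+\left(\frac{-p}{\ell}\right)$ of them; the condition $\left(\frac{-p}{\ell}\right)=1$ forces each component to be a horizontal cycle. In case (2) with $\ell=2$ and $p\equiv 1\pmod 4$, the single order $\Z[\sqrt{-p}]=\OO_K$ has discriminant $-4p$, so $2\mid\disc$ gives exactly one ramified horizontal $2$-isogeny per vertex, no vertical isogenies are possible, and counting $\Fp$-isomorphism classes via $\Cl(\OO_K)$ (adjusted for twists, which come in pairs with identified endomorphism ring) gives $h(-4p)$ vertices. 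In case (3) with $p\equiv 3\pmod 4$, at the surface the splitting of $2$ in $\OO_K$ is controlled by $-p\bmod 8$: if $p\equiv 7\pmod 8$ then $2$ splits giving two horizontal edges, leaving exactly one descending edge; if $p\equiv 3\pmod 8$ then $2$ is inert giving no horizontals and three descending edges. From the floor, $2$ is never split in $\Z[\sqrt{-p}]$ (since $2$ divides the conductor), so no horizontal edges exist, and exactly one ascending isogeny per floor vertex connects it to the surface. The vertex counts follow from the classical ratio
\[
\frac{h(\Z[\sqrt{-p}])}{h(\OO_K)}=\frac{[\OO_K^\times:\Z[\sqrt{-p}]^\times]^{-1}\cdot 2}{1}\left(1-\left(\tfrac{-p}{2}\right)\tfrac{1}{2}\right),
\]
which gives $h(-p)$ floor vertices when $p\equiv 7\pmod 8$ and $3h(-p)$ when $p\equiv 3\pmod 8$, with $h(-p)$ surface vertices in both subcases after accounting for twists.

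The main obstacle I expect is handling the twist bookkeeping carefully: vertices of $\FpGraph$ are $\Fp$-isomorphism classes, not $\jp$-invariants, and one must verify both that vertical $\Fp$-edges really are $\Fp$-rational (not just $\Fpbar$-rational) and that the class number computations above give the count of $\Fp$-isomorphism classes rather than of $\jp$-values, keeping track of how the two quadratic twists of a given $\jp$ are distributed across surface and floor (with special attention to $\jp=0,1728$, where the twist group enlarges). These bookkeeping issues are where the proof has the most potential to go wrong, and where a direct appeal to the detailed analysis in \cite{DelGal01} is most useful.
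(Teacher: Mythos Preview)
Your proposal is correct and follows essentially the same approach as the paper: both reduce to the Delfs--Galbraith analysis of $\Fp$-endomorphism rings as orders in $\Q(\sqrt{-p})$, then read off the volcano structure from the splitting of $\ell$ and the conductor, with class number formulas giving the vertex counts. The paper's own proof is in fact little more than a citation to \cite[Theorem~2.7]{DelGal01} together with a three-line outline (Deuring correspondence with CM curves, volcano structure over $\C$ reduces to $\Fp$, and no extra edges appear because $E[\ell]\not\subset E(\Fp)$ for supersingular $E$ when $\ell>2$), so your write-up is already more detailed than what appears in the paper.
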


This implies that every connected component of $\FpGraphtwo$ is an isogeny volcano, first studied by Kohel \cite{Kohel}. For a reference on the name and basic properties we refer to \cite{Sut13}.

\begin{proof}Theorem 2.7 in \cite{DelGal01}. We will reference the methods in the proof:

\begin{enumerate}
    \item There is a one-to-one correspondence between supersingular elliptic curves over $\Fp$ and elliptic curves defined over $\mathbb{C}$ with CM by $\OO \in \{ \Z[\sqrt{-p}], \Z \left[ \frac{1+ \sqrt{-p}}{2 }  \right]\}$.
    
     \item Isogeny graphs of CM curves have a volcano structure, and the edges of the volcano reduce to edges in the graph $\FpGraph$. Hence, there will be a volcano-like structure over $\Fp$. 
    
    \item The reduction does not add any more edges. For $\ell = 2$ we reprove the main ingredient in Lemma \ref{lemma:full_two_torsion}. Adding more edges between $\Fp$ vertices would imply that $E[\ell] \subset E(\Fp)$ and this cannot happen for $\ell > 2$ because the curves are supersingular.
    
    Hence we will see the volcano structure over $\Fp$. \qedhere
\end{enumerate}

\end{proof}

Let $K = \Q(\sqrt{-p})$, $\p$ be a prime above $\ell = 2$ in $\OO_K$, and $h = \# \Cl(\OO_K)$ the class number of $K$. Let $n$ be the order of $\p$ in $\Cl(\OO_K)$. The surface of any volcano in $\FpGraphtwo$ is a cycle of precisely $n$ vertices. There are $h/n$ connected components (volcanoes) in $\FpGraphtwo$, the index of $\langle \p \rangle$ in $\Cl(\OO_K)$.

\begin{figure}[h!]
\centering
\includegraphics[scale=0.7]{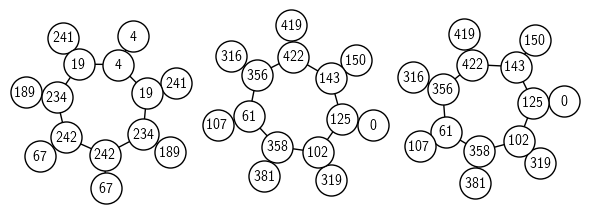}
\caption{The graph $\FpGraphtwo$ for $p = 431$. The integer labels are the $j$-invariants of each curve. Each component is a volcano, with an inner ring of surface curves and the outer vertices all being curves on the floor. $431 \equiv 7 \mod 8$, so we are in case $2.a$ of Theorem \ref{theorem:volcanos}. The class number of $\Q(\sqrt{-431})$ is $3 \cdot 7 = 21$ and the orders of the two primes above $2$ are $7$.}
\label{431}
\end{figure}

\begin{lemma} \label{lemma:full_two_torsion}
Let $p$ be a prime, $p>5$.  Let $E$ be a supersingular elliptic curve defined over $\mathbb{F}_p$. Then
\begin{align*}
   \End_{\Fp}(E) = \Z\left[\frac{1+\sqrt{-p}}{2}\right] \quad \text{if and only if} \quad  E[2] \subset E(\F_p).
\end{align*}
\end{lemma}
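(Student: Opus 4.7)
The strategy is to translate the condition $E[2] \subset E(\F_p)$ into an arithmetic condition on the Frobenius endomorphism $\pi_p$, and then identify that condition with the containment of $\frac{1+\sqrt{-p}}{2}$ in $\End_{\F_p}(E)$. Throughout I assume $p\equiv 3 \pmod 4$, which is the only case where the statement is not vacuous, since otherwise $\tfrac{1+\sqrt{-p}}{2}$ is not even an algebraic integer.

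First, because $E$ is supersingular and defined over $\F_p$, the $\F_p$-Frobenius $\pi_p \in \End_{\F_p}(E)$ satisfies $\pi_p^2 = -p$, so the embedding of the commutative ring $\End_{\F_p}(E)$ into its field of fractions identifies $\pi_p$ with $\sqrt{-p}\in K=\Q(\sqrt{-p})$. Then $\End_{\F_p}(E)$ is an order in $K$ containing $\Z[\pi_p] = \Z[\sqrt{-p}]$. Since the conductor of $\Z[\sqrt{-p}]$ in $\OO_K = \Z\!\left[\tfrac{1+\sqrt{-p}}{2}\right]$ equals $2$ (as $p\equiv 3\pmod 4$), the only two orders in $K$ containing $\Z[\sqrt{-p}]$ are $\Z[\sqrt{-p}]$ and $\OO_K$ itself. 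So the conclusion of the lemma amounts to showing that $\frac{1+\sqrt{-p}}{2}$ belongs to $\End_{\F_p}(E)$ if and only if $E[2]\subset E(\F_p)$.

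Next, I observe that $E[2]\subset E(\F_p)$ is equivalent to $\pi_p$ acting as the identity on $E[2]$, i.e., to the endomorphism $\pi_p - 1$ vanishing on $E[2]$. The key step is the standard fact that an endomorphism of $E$ that vanishes on $\ker[2] = E[2]$ factors through $[2]$; concretely, if $(\pi_p-1)|_{E[2]} = 0$ then there exists $\alpha \in \End(E)$ with $\pi_p - 1 = [2]\circ\alpha$, so $\alpha = \frac{\pi_p-1}{2} \in \End(E)$. Conversely, if $\frac{\pi_p-1}{2} \in \End(E)$ then certainly $\pi_p - 1$ kills $E[2]$. Moreover, this $\alpha$ lies in the commutative subalgebra $\Q(\pi_p)\cap\End(E) = \End_{\F_p}(E)$ since it commutes with $\pi_p$, so the distinction between $\End(E)$ and $\End_{\F_p}(E)$ is harmless here.

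Finally, the identity
\[
\frac{\pi_p - 1}{2} \;=\; \frac{-1 + \sqrt{-p}}{2} \;=\; \frac{1+\sqrt{-p}}{2} - 1
\]
shows that $\tfrac{\pi_p-1}{2}\in\End_{\F_p}(E)$ iff $\tfrac{1+\sqrt{-p}}{2}\in\End_{\F_p}(E)$, which by the dichotomy above is equivalent to $\End_{\F_p}(E) = \Z\!\left[\tfrac{1+\sqrt{-p}}{2}\right]$. The main (and only) non-formal ingredient is the factoring-through-$[2]$ step, which is the standard universal property of isogeny quotients; everything else is bookkeeping in the imaginary quadratic order $\End_{\F_p}(E)$.
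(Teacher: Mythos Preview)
Your proposal is correct and follows essentially the same approach as the paper: both identify Frobenius with $\pm\sqrt{-p}$, reduce to the dichotomy between the two orders $\Z[\sqrt{-p}]$ and $\Z\!\left[\tfrac{1+\sqrt{-p}}{2}\right]$, and then use the universal property of the quotient isogeny $[2]$ to show that $\pi_p-1$ (the paper uses $1\pm\pi$) factors through $[2]$ precisely when $E[2]\subset E(\F_p)$. The only cosmetic differences are that you work with $\pi_p-1$ directly rather than invoking $-P=P$ on $2$-torsion, and you justify $\F_p$-rationality of the quotient via commutation with Frobenius rather than as a quotient of $\F_p$-rational maps.
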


For $p \equiv 1 \mod 4$, the ring $Z[(1+\sqrt{-p})/2]$ is not an order of $\mathcal{O}_{K}$, so supersingular elliptic curves in $\Fp$ do not have their full two-torsion defined over $\Fp$.

\begin{proof}
This proof is an adaptation of the techniques on page 7 of \cite{DelGal01}. 

Let $E$ be any supersingular elliptic curve defined over $\FF_p$. Then $\# E(\FF_p) = p +1$ and the minimal polynomial of Frobenius $\pi$ is $x^2 + p$. This implies $\pi = \pm \sqrt{-p} \in \Z(\sqrt{-p})$. We have \begin{align*}
\Q(\sqrt{-p})\supseteq\End_{\Fp}(E) \supset \Z[\pi] = \Z [\sqrt{-p}].
\end{align*}

Thus, $\End_{\Fp}(E)$ is either isomorphic to $\Z [\sqrt{-p}]$ or $\Z\left[\frac{1+\sqrt{-p}}{2}\right]$, as an order in $\Q(\sqrt{-p})$.

First, suppose $E[2] \subset E(\F_p)$. Take $P\in E[2]$. Frobenius acts as the identity on  the $2$-torsion:
\[\pi(P) = P \Rightarrow (1 + \pi)(P) = 0,\]
where $0$ denotes the identity of $E$, since $-P = P$ for $P\in E[2]$.
Hence $E[2] \subset \ker(1 + \pi)$.
Isogenies have the universal property of a quotient, so we obtain the factorization 
\begin{align*} 
\xymatrix{E \ar[rr]^{[2]} \ar[rd]^{1 + \pi} & & E \\
& E \ar@{..>}[ur]_{\phi} & 
 }
\end{align*}
and conclude $1 + \pi = [2] \phi$. The map $\phi = \frac{1+\pi}{2}$ is  $\mathbb{F}_p$-rational, since it is the quotient of $\mathbb{F}_p$-rational maps, so $\frac{1 + \pi}{2} \in \End_{\mathbb{F}_p}(E)\cong \Z \left[ \frac{1+\sqrt{-p}}{2} \right]$.
 
Conversely, suppose $\End_{\mathbb{F}_p}(E) \cong \Z \left[ \frac{1+\sqrt{-p}}{2} \right]$. Consider $\phi = \frac{1-\pi}{2}\in \End_{\mathbb{F}_p}(E) \cong \Z \left[ \frac{1+\sqrt{-p}}{2} \right]$, where $\pi$ is Frobenius. Take any $P\in E[2]$:

\begin{align*}
(1- \pi)(P) = 2\phi(P) = \phi \cdot 2(P) = 0 \Longrightarrow  1 = \pi \quad \text{ on } E[2].
\end{align*} 
Frobenius acts trivially on $E[2]$, so we have $E[2] \subset E(\F_p)$.
\end{proof}

The following corollary of Lemma \ref{lemma:full_two_torsion} will be essential in our discussion in Section \ref{subsec:stacking_folding_attaching_2}. 

\begin{cor}[Endomorphism rings of quadratic twists]\label{lemma:quadratic_twists} Let $p > 3$ be a prime, let $E$ be an elliptic curve defined over $\Fp$ and let $E^t$ denote its quadratic twist. Then 
\begin{align*}
    \End_{\Fp} (E) \cong \End_{\Fp} (E^{t}).
\end{align*}
\end{cor}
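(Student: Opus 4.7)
The plan is to reduce the statement to Lemma \ref{lemma:full_two_torsion}, which characterizes $\End_{\Fp}(E)$ (for $E/\Fp$ supersingular) in terms of whether or not $E[2]$ is $\Fp$-rational. First I observe that since supersingularity depends only on $j(E)$ and twists have the same $j$-invariant, $E^t$ is itself supersingular, and it is defined over $\Fp$ by construction, so Lemma \ref{lemma:full_two_torsion} applies to $E^t$ as well as to $E$. Thus it suffices to prove the equivalence
\begin{align*}
E[2] \subset E(\Fp) \quad \Longleftrightarrow \quad E^t[2] \subset E^t(\Fp).
\end{align*}

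The case $p \equiv 1 \pmod{4}$ is immediate: as noted after Lemma \ref{lemma:full_two_torsion}, the order $\Z[(1+\sqrt{-p})/2]$ is not contained in $\mathcal{O}_{\Q(\sqrt{-p})}$ in this case, so Theorem \ref{theorem:volcanos}(2) forces both $\End_{\Fp}(E)$ and $\End_{\Fp}(E^t)$ to equal $\Z[\sqrt{-p}]$, and there is nothing more to prove. So the content of the corollary is in the case $p \equiv 3 \pmod{4}$, where I would verify the displayed equivalence by a direct computation with Weierstrass models.

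Since $p > 3$, write $E: y^2 = x^3 + ax + b$ with $a,b \in \Fp$ and pick a non-square $d \in \Fp^\times$; a convenient model for the quadratic twist is $E^t: y^2 = x^3 + ad^2 x + bd^3$. The nontrivial $2$-torsion of $E$ consists of $(\alpha,0)$ with $\alpha$ a root of $f(x) := x^3 + ax + b$, while that of $E^t$ consists of $(d\alpha,0)$ with the same $\alpha$, since
\begin{align*}
(d\alpha)^3 + ad^2(d\alpha) + bd^3 = d^3\bigl(\alpha^3 + a\alpha + b\bigr) = d^3 f(\alpha).
\end{align*}
Because $d \in \Fp^\times$, the set $\{d\alpha\}$ lies in $\Fp$ if and only if $\{\alpha\}$ does, so $E[2] \subset E(\Fp)$ if and only if $f$ splits over $\Fp$, if and only if $E^t[2] \subset E^t(\Fp)$. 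Applying Lemma \ref{lemma:full_two_torsion} to each of $E$ and $E^t$ then gives $\End_{\Fp}(E) \cong \End_{\Fp}(E^t)$.

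The argument has no real obstacle — the only point that needs care is to put the twist in a Weierstrass form whose $2$-torsion transparently matches that of $E$ (which boils down to the computation above). One small caveat: Lemma \ref{lemma:full_two_torsion} is stated for $p > 5$, whereas the corollary asserts $p > 3$; the missing case $p = 5$ satisfies $p \equiv 1 \pmod{4}$ and so is covered by the trivial case above without invoking the lemma.
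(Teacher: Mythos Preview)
Your proof is correct and follows essentially the same route as the paper: write $E$ and its quadratic twist in short Weierstrass form, observe that the $x$-coordinates of the $2$-torsion of $E^t$ are $d$ times those of $E$, conclude $E[2]\subset E(\Fp)\iff E^t[2]\subset E^t(\Fp)$, and then invoke Lemma~\ref{lemma:full_two_torsion}. Your extra remarks about the $p\equiv 1\pmod 4$ case and the $p=5$ edge case are nice touches of care that the paper does not make explicit.
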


\begin{proof}
Suppose $E$ is given by the equation
\begin{align*}
    E: y^2 = x^3 + ax+ b .
\end{align*}
Let $d$ be a quadratic non-residue modulo $p$. Then the quadratic twist is given by the equation 
\begin{align*}
     E^{t}  : y^2 = x^3 +d^2 a x + d^3b 
\end{align*} 
and the isomorphism $E \rightarrow E^{t}$ defined over $\F_{p^2}$ is given by 
\[(x,y) \mapsto \left(\frac{x}{d}, \frac{y}{d\sqrt{d}} \right).\]

$2$-torsion points $(x,y)$ satisfy $y= 0$, so $E[2] \subset E(\Fp)$ if and only if $E^{t}[2] \subset E^{t}(\Fp)$. The result follows from Lemma \ref{lemma:full_two_torsion}.
\end{proof}

 \begin{example}[$j = 0$ is always on the floor] Suppose $p > 3$. Any supersingular elliptic curve $E_0 / \Fp$ with $j$-invariant $0$ satisfies $\End_{\Fp} (E) = \Z[\sqrt{-p}]$. \label{example:j=0_always_on_the_floor}
\end{example} 
\begin{proof}
$E_0$ is supersingular if and only if $p \equiv 2 \mod 3$, so $p\equiv2\mod{3}$. Take a short Weierstrass model $E_0 : y^2 = x^3 - d$.
By inspection, 
\begin{align*}
         E[2] \subset E(\Fp) \Longleftrightarrow x^3 - d \text{ splits completely} \Longleftrightarrow \zeta_3 \in \Fp  \Longleftrightarrow 3 | p-1,
\end{align*} 
where $\zeta_3$ denotes a $3$rd root of unity. However, we have $p \equiv 2 \mod 3$, so $E[2]$ is not defined over $\Fp$.
By Lemma \ref{lemma:full_two_torsion} we have $\End_0(E) = \Z[\sqrt{-p}]$.
\end{proof}

\begin{example}[$j = -3375$ is always on the floor] Let $p > 3$ be a prime. Any supersingular elliptic curve $E_{-3375}/ \Fp$ with $j$-invariant $-3375$ satisfies $\End_{\Fp} (E_{-3375}) = \Z[\sqrt{-p}]$.
\end{example}

\begin{proof}
We have $\Phi_2(-3375, x) = (x - 16581375) \cdot (x + 3375)^{2}$. Suppose that either of the vertices corresponding to the $j$-invariant $-3375$ in the graph $\FpGraphtwo$ lies on the surface, and thus has endomorphism ring isomorphic to $\Z\left[\frac{1 + \sqrt{-p}}{2}\right]$. The $\jp$-invariants on the surface have three neighbours. Since there are no loops in $\FpGraphtwo$, this vertex would have two neighbours with $j$-invariants $-3375$, but there cannot be three vertices corresponding to $\jp = -3375$. Hence $\End_{\Fp}(E_{-3375}) \cong \Z[\sqrt{-p}]$.
     
Also note that the two self-isogenies of $\jp = -3375$ are not defined over $\Fp$. 
\end{proof}
If $\jp = 1728$ and $\jp = -3375$ are both supersingular ($p\equiv 3 \mod{4}$ and $p \equiv 3,5,6 \mod 7$), the proof also allows us to conclude that the endomorphism ring $\End_{\Fp} (E)$ of any supersingular elliptic curve $E$ with $j$-invariant $j(E) = 16581375$ is $\Z \left[ \frac{1+ \sqrt{-p}}{2} \right]$.

\begin{example}[The $j$-invariant $1728$ is both on the surface and on the floor.] \label{cor:endo_of_1728} Suppose $p > 3$ with $p \equiv 3 \mod 4$. The isogeny \begin{align*}
\phi: E_{1728}: y^2 = x^3 - x  & \rightarrow y^2 =x^3 + 4x = : E_{1728}^t \\
(x,y) & \mapsto \left(\frac{x^{2} + x + 2}{x + 1}, \frac{x^{2} y + 2 x y -  y}{x^{2} + 2
x + 1}\right)
\end{align*} is a vertical $2$-isogeny with kernel $(0,0)$ of non-$\Fp$-isomorphic supersingular elliptic curves with $j$-invariant $1728$.
\end{example}

Note that $E_{1728}^t$ is a quartic twist, not a quadratic twist, so Lemma \ref{lemma:quadratic_twists} does not apply. 
\begin{proof}
The isogeny $\phi$ was obtained by Velu's formulas.
Factoring the right-hand side of the Weierstrass equation for $E_{1728}$, we see $E[2]\subset E(\F_p)$. By Lemma \ref{lemma:full_two_torsion},
\begin{align*}
    \End_{\Fp}(E_{1728}) \cong \Z \left[ \frac{1+\sqrt{-p}}{2} \right]
\end{align*}
Since $p \equiv 3 \mod 4$, the $\Fp$ points of $E_{1728}^t[2]$ are precisely $\{\OO_E, (0,0)\}$. Again by Lemma 
\ref{lemma:full_two_torsion},
\begin{align*}
    \End_{\Fp}(E_{1728}^t) \cong \Z \left[ \sqrt{-p} \right],
\end{align*}
so $\phi$ is a vertical isogeny.
\end{proof}

Example \ref{cor:endo_of_1728} is the only vertical isogeny between two elliptic curves with same $j$-invariants.

\begin{cor} \label{cor:surface_floor_is_true}
Let $v_a, w_a$ be the distinct vertices in $\FpGraph$ corresponding to the $j$-invariant $\jp = a \in \Fp$ for $a \neq 1728$. Then, either $v_a$ and $w_a$ are either both on the floor or both on the surface of $\FpGraph$.
\end{cor}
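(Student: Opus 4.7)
The plan is to reduce the statement to Corollary \ref{lemma:quadratic_twists}, which says that an elliptic curve $E/\Fp$ and its quadratic twist $E^{t}$ satisfy $\End_{\Fp}(E) \cong \End_{\Fp}(E^{t})$, hence sit at the same level of the volcano. So the whole task is to verify that for $\jp = a \neq 1728$, the two vertices $v_a, w_a \in \FpGraph$ lying over $a$ correspond precisely to a curve and its quadratic twist.

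First I would handle the generic case $a \neq 0, 1728$. By the discussion at the start of Section \ref{subsection:volcano} (Proposition X.5.4 of \cite{AEC}), the set of $\Fp$-twists of such a curve is $\Fp^{*}/(\Fp^{*})^2 \cong \Z/2\Z$, so there are exactly two $\Fp$-isomorphism classes over $a$ and they are related by quadratic twisting. Corollary \ref{lemma:quadratic_twists} then gives $\End_{\Fp}(v_a) \cong \End_{\Fp}(w_a)$ and thus both are on the floor or both are on the surface.

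The only case requiring additional care is $a = 0$. Here the set of twists is a priori $\Fp^{*}/(\Fp^{*})^6$, which could be larger than $\Fp^{*}/(\Fp^{*})^2$. However, $\jp = 0$ is supersingular only when $p \equiv 2 \pmod{3}$, i.e.\ $p - 1 \equiv 1 \pmod 3$, in which case $\gcd(6,p-1) = 2$ and $(\Fp^{*})^6 = (\Fp^{*})^2$. So the sextic twist group again collapses to the quadratic twist group, there are again only two vertices over $\jp = 0$, and they are quadratic twists of one another. Corollary \ref{lemma:quadratic_twists} applies and both lie at the same level; this is consistent with Example \ref{example:j=0_always_on_the_floor}, which forces both to be on the floor when $p \equiv 3 \pmod 4$. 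If $p \equiv 1 \pmod 4$ there is nothing to prove since surface and floor coincide by Theorem \ref{theorem:volcanos}.

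There is essentially no hard step; the main conceptual point is to notice why $\jp = 1728$ must be excluded. There, the twist group is $\Fp^{*}/(\Fp^{*})^4$, which (for $p \equiv 3 \pmod 4$) has order $2$ but whose nontrivial element is realized by a quartic, \emph{not} quadratic, twist. Corollary \ref{lemma:quadratic_twists} therefore fails to apply, and in fact Example \ref{cor:endo_of_1728} exhibits an explicit vertical $2$-isogeny between the two $\Fp$-isomorphism classes over $\jp = 1728$, showing that the statement genuinely fails at $a = 1728$.
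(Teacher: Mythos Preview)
Your proof is correct and follows essentially the same route as the paper: reduce the generic case $a \neq 0,1728$ to Corollary~\ref{lemma:quadratic_twists} via the fact that the two vertices are quadratic twists, and treat $a=0$ separately. The only cosmetic difference is that the paper dispatches $a=0$ by citing Example~\ref{example:j=0_always_on_the_floor} directly, whereas you argue that the sextic twist group collapses to the quadratic one so that Corollary~\ref{lemma:quadratic_twists} still applies---both are valid.
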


\begin{proof} The case of $\jp= 0$ was handled in Example \ref{example:j=0_always_on_the_floor}. 
For $\jp \neq 0, 1728$, the two vertices $v_\jp$ and $w_\jp$ correspond to an elliptic curve and its quadratic twist. The result follows from Corollary \ref{lemma:quadratic_twists}.
\end{proof}

Another proof of this statement can be found in the appendix of \cite{KANEKO} and is obtained by a careful examination of Hilbert polynomials of discriminant $-p$ and $-4p$, considered modulo $p$. 

Kaneko actually proves that $$\gcd(h_{-4p}(x), h_{-p}(x)) = x - 1728, $$ which translates to the statement that $j = 1728$ is the only $j$-invariant that can be both on the surface and the floor. Kaneko in turn gives credit to 
\cite{IBUKIYAMA}, who proved the statement (and more) in purely quaternionic terms.

Now, we describe the potential shapes of $\FpGraphtwo$. The results are given in \cite{DelGal01}, however, we recall these potential shapes of $\FpGraphtwo$ to compare with those of $\FpSubGraph \subset \FpBarGraphtwo$.

\subsubsection{The graph $\FpGraphtwo$ in the case of $p \equiv 1 \mod 4$} \label{subsection:1mod4}

    For $p \equiv 1 \mod 4$, the ring $ \Z[\sqrt{-p}]$ is the maximal order in $\Q(\sqrt{-p})$ and the prime $2$ is ramified. 

\begin{lemma}\label{lem:spine-1-mod-12}
    Suppose that $p > 7$. Then each connected component of $\FpGraphtwo$ is a single edge and the edges correspond to horizontal isogenies.

\end{lemma}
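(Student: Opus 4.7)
The plan is to combine case~2 of Theorem~\ref{theorem:volcanos} with a short norm-form argument ruling out self-loops.

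First, I would cite Theorem~\ref{theorem:volcanos}, case~2 directly: for $p \equiv 1 \mod 4$, every vertex of $\FpGraphtwo$ has a \emph{unique} outgoing $\Fp$-rational $2$-isogeny, and the surface coincides with the floor (so in particular $\End_{\Fp}(E) = \Z[\sqrt{-p}] = \OO_K$ for every vertex $E$). Because there is no distinction between surface and floor, every edge that appears in $\FpGraphtwo$ is automatically horizontal. This settles the horizontality claim for free, so the remaining task is to identify each connected component with a single (undirected) edge.

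Next, I would trace out a component. Starting from a vertex $v$, let $\phi \colon E_v \to E_w$ be the unique outgoing $\Fp$-rational $2$-isogeny. The dual $\hat\phi \colon E_w \to E_v$ is also $\Fp$-rational of degree $2$, so by uniqueness applied at $w$ it must \emph{be} the outgoing $2$-isogeny from $w$. Hence the orbit closes after two steps, and the connected component of $v$ is exactly $\{v,w\}$, joined by the single undirected edge corresponding to $\phi$ and $\hat\phi$.

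The remaining possibility to exclude is $v = w$, i.e.\ a self-loop. In that case $\phi$ would be an $\Fp$-rational endomorphism of $E_v$ of degree $2$, hence an element of $\End_{\Fp}(E_v) = \Z[\sqrt{-p}]$ of norm $2$; writing it as $a + b\sqrt{-p}$ with $a,b \in \Z$, this would force $a^2 + p b^2 = 2$, which has no integer solution for $p > 2$. Hence $v \neq w$.

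The main obstacle is conceptual rather than computational: I need to be confident that ``one outgoing $2$-isogeny'' in Theorem~\ref{theorem:volcanos} really means one up to equivalence, so that no hidden multiple edges or extra self-loops arise at the special $j$-invariants (for instance $\jp = 0$, which can still be supersingular when $p \equiv 1 \mod 4$, or potentially $\jp = 8000$ when $p \equiv 5 \mod 8$). The endomorphism-ring computation above handles all vertices uniformly, and the hypothesis $p > 7$ merely excludes the tiny prime $p = 5$, whose only supersingular $j$-invariant is $\jp = 0$ with its extra automorphisms.
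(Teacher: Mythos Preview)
Your proof is correct and matches the paper's approach closely: the paper likewise invokes Theorem~\ref{theorem:volcanos} for horizontality and the single outgoing $2$-isogeny, and in its third variant uses exactly your norm-form observation (no element of norm~$2$ in $\Z[\sqrt{-p}]$ for $p>7$) to exclude self-loops. The only cosmetic difference is that you close the two-vertex orbit explicitly via the dual isogeny, whereas the paper phrases the same step as the ramified prime above~$2$ having order~$2$ in $\Cl(\OO_K)$.
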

\begin{proof} Since $p \equiv 1 \mod 4$, the ring $\Z[\sqrt{-p}]$ is the ring of integers in $\Q(\sqrt{-p})$ and hence, any supersingular elliptic curve over $\Fp$ satisfies $$\End_{\Fp} (E) \cong \Z[\sqrt{-p}]$$
All of these edges are horizontal isogenies because all the curves satisfy $\End_{\Fp}(E) = \Z[\sqrt{-p}]$.

The proof of this is already in \cite{DelGal01}. We present three proofs of the first statement.
    
    \begin{enumerate}
        \item Since every elliptic curve over $\Fp$ has $p +1$ points and $p + 1 \equiv 1 + 1 = 2 \mod 12$, we see that $\# E[2] =2 $, that is, there is exactly one point of order $2$ defined over $\Fp$ and hence exactly one $2$-isogeny defined over $\Fp$. 
        
        \item Because the ring $\Z[\sqrt{-p}]$ is already the maximal order, Lemma \ref{lemma:full_two_torsion}, we get that $E[2] \not\subset E(\F_p)$ and so there can only be one outgoing $2$-isogeny just like in the previous case.
        
        \item Since $(2)$ is ramified in $\OO_K = \Z[\sqrt{-p}]$, it has order $2$ in $\Cl(\OO_K)$ (this is since $p > 7$ and so there are no elements of norm $2$ in $\Z[\sqrt{-p}]$). We know that the volcano is a cycle with the number of edges equal to the order of the prime above $2$ in $\Cl(\OO_K)$, and hence we recover cycles of length $2-1= 1$. \qedhere 
    \end{enumerate} 
\end{proof}

\subsubsection{The graph $\FpGraphtwo$ in the case of $p \equiv 3 \mod 4$}
\label{sec:p3mod4}

We will use the construction in the proof of Theorem  \ref{theorem:volcanos} to describe the shape of the components of $\FpGraphtwo$.
Since $p \equiv 3 \mod 4$, we have two possible orders for endomorphism rings, and  \begin{align*}
   \mathcal{O} =  \Z[\sqrt{-p}] \subsetneq \mathcal{O}_K = \Z\left[\frac{1+\sqrt{-p}}{2}\right]
\end{align*} is an inclusion of orders of index $2$. 
To see how the prime above $2$ acts on the points in $\FpGraphtwo$ \ref{subsection:volcano}, consider the splitting behavior of $(2)\OO_K$:
\begin{enumerate}
    \item for $p \equiv 3 \mod 8$ the prime $2$ is is inert,
    \item for $p \equiv 7 \mod 8$ the prime $2$ splits into two prime ideals.
\end{enumerate}

These two congruence conditions will result in different shapes of $\FpGraphtwo$. We also consider $\jp = 1728$, as the extra automorphisms affect isogenies between $\jp=1728$ and its neighbors.

\subsubsection*{Case 1. $p \equiv 3 \mod 8$}

Let $K=\mathbb{Q}(\sqrt{-p})$. $2$ is inert in $K$, so the prime $\p$ of $\OO_K$ above $2$ has order $1$ in $\operatorname{Cl}(\OO_K)$. From Theorem \ref{theorem:volcanos}, any component of the $\FpGraphtwo$ will be a volcano with surface of size $1$ connected to the lower-levels as a `claw': There will be three edges going out of any vertex on the surface. See Figure \ref{Fig:Claws}. The elliptic curves $E$ on the surface have endomorphism ring $\Z[\frac{1+\sqrt{-p}}{2}]$ and $E[2] \subset E(\mathbb{F}_p)$, so there are three outgoing 2-isogenies defined over $\Fp$.

The volcano stops at this depth, because there are only two possible endomorphism rings: $\Z[\sqrt{-p}]$ and $\Z \left[ \frac{1+\sqrt{-p}}{2} \right]$. Therefore, the volcanoes will be \textit{claws} for $p\equiv 3\mod{8}$.

\begin{figure}[h]
\centering
\includegraphics[width=80mm]{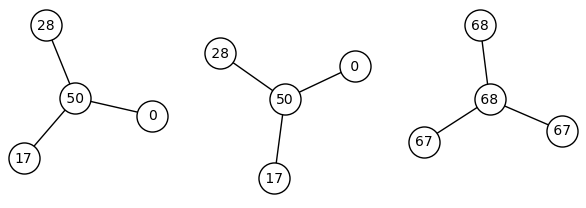}
\caption{The graph $\FpGraphtwo$ for $p= 83$: We clearly see the claw structure.}
\label{Fig:Claws}
\end{figure}

\subsubsection*{Case 2. $p \equiv 7 \mod 8$.}

In this case, the ideal $(2)\OO_K$ splits into two conjugate prime ideals. In general, they can have any order in the class group, but they are never principal. See Figure \ref{431} for an example of this case.

\subsubsection*{Neighbours of $j =1728$.}
\label{subsubsec:Neighboursof1728}

In Example \ref{cor:endo_of_1728}, we saw that that there is always a vertical isogeny from a vertex $v_{1728} $ on the surface to a vertex $w_{1728}$ on the floor of $\FpGraph$. Moreover, looking at the modular polynomial $$\phi_2(1728, x)  = (x - 1728) \cdot (x - 287496)^{2}, $$
we have the following:
\begin{enumerate}
    \item For $p \equiv 3 \mod 8$, the vertices $v_{287496}$ and $w_{287496}$ corresponding to quadratic twists with $j$-invariant $287496$ are on the floor, so
    \begin{align*}
        \End_{\Fp} (E_{287496}) \cong \Z[\sqrt{-p}].
    \end{align*}
    
    \item For $p \equiv 7 \mod 8$, the vertices $v_{287496} $ and $w_{287496}$ corresponding to quadratic twists with $j$-invariant $287496$ are on the surface of the volcano, so
    \begin{align*}
        \End_{\Fp} (E_{287496}) \cong \Z\left[\frac{1+\sqrt{-p}}{2}\right].
    \end{align*}
\end{enumerate}

\subsection{Passing from the graph $\FpGraph$ to the spine $\FpSubGraph \subset \FpBarGraph$} \label{subsec:folding-stacking-attaching}

The subgraph $\FpSubGraph$ of $\FpBarGraph$ can be obtained from the graph $\FpGraph$ in the following two steps:

\begin{enumerate}
    \item Identify the vertices with the same $j$-invariant: these two vertices of $\FpGraph$ merge to a single vertex on $\FpBarGraph$. Identify equivalent edges.
    \item Add the edges from $\FpBarGraph$ between vertices in $\Fp$ corresponding to isogenies which are defined over $\Fpbar\setminus\Fp$.
\end{enumerate}

One notation we use to distinguish between vertices of $\FpGraph$ and those of $\FpBarGraph$: Vertices of components of $\FpGraph$ corresponding to the $j$-invariant $a$ will be denoted $v_a$ and $w_a$, where $v_a$ is a vertex on the connected component $V$ of $\FpGraph$ and $w_a$ lies on the component $W$ (not necessarily distinct from $V$). Since the $j$-invariants uniquely determine the vertices of $\FpBarGraph$, we will use $a$ to denote a vertex of $\FpBarGraph$. It is useful to think of the vertices $v_a, w_a$ as elliptic curves that are twists of each other. 

Remember that $\FpGraph$ is not a subgraph of $\FpSubGraph$ since both the vertices and edges of $\FpGraph$ may be merged in $\FpSubGraph$. Fortunately, something weaker is true, as we show in the following lemma. It turns out distinct edges from the same vertex in $\FpGraph$ correspond to distinct edges in $\FpBarGraph$.

\begin{lemma}[$\F_p$-edges are rigid.]
\label{lemma:fp_edges_are_rigid}
Let $E$ be an elliptic curve with $j(E) \notin \{0,1728\}$ defined over $\F_p$ (with $p \geq 5$). Suppose that there are two $\ell$-isogenies from $E$ defined over $\F_p$. Then they are equivalent over $\F_p$ if and only if they are equivalent over $\Fpbar$. 
\end{lemma}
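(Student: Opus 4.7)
The plan is to prove the two directions separately. The forward implication is immediate: an equivalence over $\F_p$ is in particular an equivalence over $\Fpbar$, by viewing the $\F_p$-isomorphisms $\varphi, \psi$ of Definition \ref{def:equivalent_isog} as $\Fpbar$-isomorphisms via base change. So the content is entirely in the reverse implication.

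For the reverse direction, I would start by assuming that the two $\F_p$-rational isogenies $\phi_1 : E \to E_1$ and $\phi_2 : E \to E_2$ are equivalent over $\Fpbar$: there exist $\Fpbar$-isomorphisms $\varphi : E \to E$ and $\psi : E_2 \to E_1$ with $\phi_1 = \psi \circ \phi_2 \circ \varphi$. The key observation is that $\varphi$ lies in $\operatorname{Aut}_{\Fpbar}(E)$, and since $j(E) \notin \{0, 1728\}$ and $p \geq 5$, we have $\operatorname{Aut}_{\Fpbar}(E) = \{\pm 1\}$. Thus $\varphi = \pm[1]$, which is automatically defined over $\F_p$.

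It remains to show that $\psi$ is $\F_p$-rational. For this I would apply the $p$-power Frobenius $\sigma$ to the defining equation $\phi_1 = \psi \circ \phi_2 \circ \varphi$. Because $\phi_1$, $\phi_2$, and $\varphi$ are all $\F_p$-rational, they are fixed by $\sigma$, which yields
\[
\psi \circ \phi_2 \circ \varphi \;=\; \phi_1 \;=\; \phi_1^\sigma \;=\; \psi^\sigma \circ \phi_2 \circ \varphi.
\]
Now $\phi_2 \circ \varphi : E \to E_2$ is a nonzero isogeny, in particular surjective on $\Fpbar$-points and dominant as a morphism of curves. Hence we may right-cancel it (if two morphisms from $E_2$ agree after pre-composition with a surjective morphism, they agree everywhere on $E_2(\Fpbar)$, and since $E_2$ is irreducible they coincide as morphisms). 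This forces $\psi = \psi^\sigma$, so $\psi$ is $\F_p$-rational, and together with $\varphi \in \{\pm 1\}$ this gives an equivalence of $\phi_1$ and $\phi_2$ over $\F_p$.

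The main obstacle is really only a conceptual one: being clear that the hypothesis $j(E) \notin \{0,1728\}$ (together with $p \geq 5$) is exactly what pins $\varphi$ down to a rational automorphism, after which the Galois-descent step for $\psi$ is routine. If $E$ had extra automorphisms there would be room for $\varphi$ to be a genuinely $\Fptwo$-rational automorphism and the descent could fail, which is why the exclusion is essential; this also hints that the analogous statement will require care at $\jp = 0$ and $\jp = 1728$, connecting back to the special-$j$-invariant discussion in Section \ref{subsection:special_j_invariants} and the ramification behaviour seen in Example \ref{cor:endo_of_1728}.
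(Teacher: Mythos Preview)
Your proof is correct, and the overall structure---trivial forward direction, then exploit $\operatorname{Aut}_{\Fpbar}(E)=\{\pm 1\}$ for the converse---matches the paper. The difference is in how you finish: the paper argues via kernels, observing that the commuting square $\phi_2\circ\varphi = \psi\circ\phi_1$ forces $\varphi(\ker\phi_1)=\ker\phi_2$, and since $\varphi=\pm 1$ fixes every subgroup this gives $\ker\phi_1=\ker\phi_2$, whence equivalence over $\F_p$. You instead run a Galois-descent argument: once $\varphi\in\{\pm 1\}$ is $\F_p$-rational, applying Frobenius to $\phi_1=\psi\circ\phi_2\circ\varphi$ and cancelling the surjective $\phi_2\circ\varphi$ shows $\psi=\psi^\sigma$ directly. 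Your route is arguably cleaner and makes the role of $\F_p$-rationality of $E_1,E_2$ explicit; the paper's kernel argument has the mild advantage that it never needs to name $\psi$ or invoke cancellation, and it foreshadows the later use of kernels in Lemma~\ref{lemma:one-two-isogeny}. Both are short and rest on the same essential point, namely that excluding $j=0,1728$ kills the extra automorphisms.
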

\begin{proof}
If the isogenies are equivalent over $\F_q$, then they are equivalent over $\Fpbar$.

Let $\phi_1: E \to E_1$ and $\phi_2: E \to E_2$ be two isogenies that are defined over $\Fpbar$. We want to show that they are equivalent over $\F_p$. By hypothesis, there exists ($\Fpbar$-)isomorphisms $\varphi: E \to E$ and $\psi: E_1 \to E_2$ such that $\phi_2 = \psi \circ \phi_1 \circ \varphi$.

Consider the commuting square\begin{align*}
    \xymatrix{E \ar[r]^{\phi_1}  \ar[d]^{\varphi_1} & E' \ar[d]^{\varphi_2} \\
    E \ar[r]^{\phi_2}& E'
    }
\end{align*}
We know that the kernel of the map $\varphi_2 \circ \phi_1$ is $\ker \phi_1$. Therefore, the kernel of the map $\phi_2 \circ \varphi_1$ also is $\ker \phi_1$. This means that \begin{align*}
    \varphi_1 (\ker(\phi_1)) = \phi_2 .
\end{align*}
By the hypothesis on $j(E)$, $\mathrm{Aut}_{\Fp}(E) = \mathrm{Aut}_{\Fpbar}(E) = \{\pm 1\}$, so this is not possible as $[\pm 1] \ker \phi_1 = \ker \phi_1$. 
\end{proof}
We note that the proof above works if we replace $q$ with $p^n$ and consider isogenies and curves defined over $\F_q$, however, this will not be needed in our discussion.

Lemma \ref{lemma:fp_edges_are_rigid} for $\ell = 2$ gives the following corollary.
\begin{cor} If the neighbors of an elliptic curve $E$ with $j$-invariant $\jp$ in $\FpGraphtwo$ are elliptic curves with $j$-invariants $\jp_1, \jp_2$ and $\jp_3$, then the neighbors of 
$\jp$ in $\FpBarGraphtwo$ are $\jp_1, \jp_2$ and $\jp_3$.
\end{cor}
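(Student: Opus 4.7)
The approach is to apply Lemma~\ref{lemma:fp_edges_are_rigid} together with the regularity of $\FpBarGraphtwo$. Each of the three neighbors $\jp_1, \jp_2, \jp_3$ of $E$ in $\FpGraphtwo$ is witnessed by an $\Fp$-rational $2$-isogeny $\phi_i : E \to E_i$ with $j(E_i) = \jp_i$. Since every $\Fp$-isogeny is in particular defined over $\Fpbar$, each $\phi_i$ also gives rise to an edge from $\jp$ to $\jp_i$ in $\FpBarGraphtwo$. So it suffices to show that these three $\Fpbar$-edges account for all incident edges at $\jp$.

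First I would argue that the three $\phi_i$ remain pairwise inequivalent over $\Fpbar$. By hypothesis they determine three distinct edges of $\FpGraphtwo$ emanating from $E$, hence they are pairwise inequivalent over $\Fp$ in the sense of Definition~\ref{def:equivalent_isog}. Provided $\jp \notin \{0, 1728\}$, Lemma~\ref{lemma:fp_edges_are_rigid} applies and guarantees that $\Fp$-inequivalence implies $\Fpbar$-inequivalence, so the $\phi_i$ give three distinct edges of $\FpBarGraphtwo$ at $\jp$. Since $\FpBarGraphtwo$ is $(\ell+1) = 3$-regular away from $\jp = 0, 1728$, these three edges exhaust the incident edges of $\jp$, and the list of neighbors in $\FpBarGraphtwo$ is exactly $\{\jp_1, \jp_2, \jp_3\}$.

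The main obstacle is the two special vertices excluded from Lemma~\ref{lemma:fp_edges_are_rigid}. For $\jp = 0$, Example~\ref{example:j=0_always_on_the_floor} shows that $E$ lies on the floor, so it has only one $\Fp$-neighbor in $\FpGraphtwo$ and the hypothesis of the corollary is vacuous. For $\jp = 1728$ (which requires $p \equiv 3 \pmod 4$ for supersingularity), I would instead appeal directly to the factorization $\Phi_2(1728, X) = (X - 1728) \cdot (X - 287496)^2$ recorded in the proof of Corollary~\ref{cor:double_edges_for_ell_2}: in $\FpBarGraphtwo$ the vertex $\jp = 1728$ has a self-loop and a double edge to $287496$, which matches the list of three $\Fp$-neighbors in $\FpGraphtwo$, namely the vertical isogeny to the other vertex with $j$-invariant $1728$ (Example~\ref{cor:endo_of_1728}) together with the two horizontal isogenies to $287496$ on the surface. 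Thus the conclusion holds in every case.
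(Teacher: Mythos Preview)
Your proposal is correct and follows the same approach the paper intends: the paper simply records this corollary as an immediate consequence of Lemma~\ref{lemma:fp_edges_are_rigid} together with the $3$-regularity of $\FpBarGraphtwo$, and you have spelled out exactly those details. You also go further than the paper by treating the excluded vertices $\jp = 0, 1728$ explicitly, which the paper glosses over. One small imprecision: your description of the two isogenies from the surface vertex $v_{1728}$ to $287496$ as ``horizontal'' is only accurate when $p \equiv 7 \pmod 8$; for $p \equiv 3 \pmod 8$ the $287496$-vertices sit on the floor and those isogenies are vertical (see Section~\ref{subsubsec:Neighboursof1728}). This does not affect your argument, since all that matters is that the three $\Fp$-isogenies from $v_{1728}$ land on curves with $j$-invariants $1728, 287496, 287496$, as confirmed by the explicit maps in Corollary~\ref{cor:double_edges_for_ell_2}.
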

Since there are always at most $2$ neighbours of any vertex in $\FpGraph$ for $\ell > 2$, the above corollary does not generalize. However, it is still true that if there are neighbours of $v_a, w_a$ (recall that $v_a, w_a$ are the two vertices in $\FpGraph$ that have $j$-invariant $a$) that have $j$-invariants $b,c,d,e$, then there are (not necessarily distinct) edges $[a,b],[a,c],[a,d]$ and $[a,e]$ in $\FpBarGraph$.

Defined below are the four processes that can happen to the components of $\FpGraph$ when passing to $\FpSubGraph$. We will show that this list is exhaustive.

\begin{definition}[Stacking, folding and attaching]\label{stacking}
\leavevmode
\begin{enumerate}
    \item Let $V$ and $W$ be two distinct components of $\FpGraph$. We say that $V$ and $W$ \textbf{stack} if, when we relabel the vertices $v_a$ by the $j$-invariant $a$, they become isomorphic as graphs.
    \label{def:stacking}
    
    \item A connected component $V$ of $\FpGraph$ \textbf{folds} in $\FpBarGraph$ if $V$ contains vertices corresponding to both quadratic twists of every $j$-invariant on $V$. The term is meant to invoke what happens to this component when the quadratic twists are identified in $\FpBarGraph$.
    \label{def:folding}

    \item Two connected components $V$ and $W$ of $\FpGraph$ become \textbf{attached by a new edge} in $\FpBarGraph$ if there is a new edge $[a,b] \in \FpBarGraph$ corresponding to an isogeny between vertices $v_a \in V$ and $w_b \in W$ that is not defined over $\Fp$.
    \label{def:attaching}
    
    \item We say that two components $V$ and $W$ of $\FpGraph$ for $\ell > 2$ \textbf{attach along the $j$-invariant} $a$ if they both contain a vertex $v_a \in V, w_a \in W$ that corresponds to $j$-invariant $a$ and such that there is a neighbour $v_b$ of $v_a$ with $j$-invariant $b$ such that the twist of $w_b$ is not a neighbour of $w_a$ and vice versa.
     \label{def:attaching_along_j}
    \end{enumerate}

\end{definition}

An example of the first three of the four phenomena are given by Figure  \ref{fig:attachment} and 
an example of attachment along a $j$-invariant can be seen in Figure \ref{fig:attachment_vertex}. 

\begin{figure}[h!]
\begin{subfigure}{.45\textwidth}
  \centering
  \includegraphics[width=\linewidth]{431fp.png}
  \caption{The $\FpGraphtwo$ for $p = 431$}
\end{subfigure}
\begin{subfigure}{.45\textwidth}
  \centering
  \includegraphics[width=\linewidth]{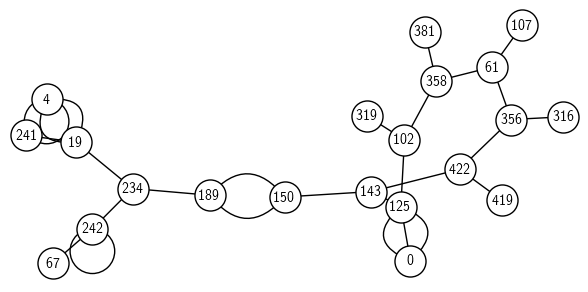}
  \caption{The spine $\FpSubGraph \subset \FpBarGraphtwo$ for $p = 431$.}
\end{subfigure}
\caption{Stacking, folding and attaching by an edge for $\p = 431$ and $\ell = 2$. The leftmost component of $\FpGraphtwo$ folds, the other two components stack, and the vertices $189$ and $150$ get attached by a double edge.}
\label{fig:attachment}
\end{figure}

\begin{figure}[h!]
\begin{subfigure}{.45\textwidth}
  \centering
  \includegraphics[width=\linewidth]{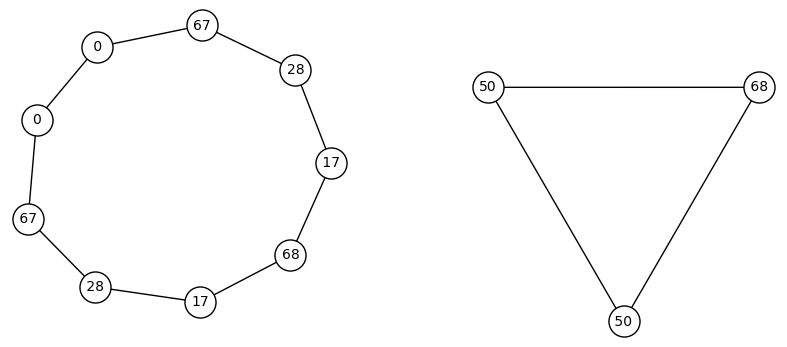}
  \caption{The $\FpGraphthree$ for $p = 83$}
\end{subfigure}
\begin{subfigure}{.45\textwidth}
  \centering
  \includegraphics[width=\linewidth]{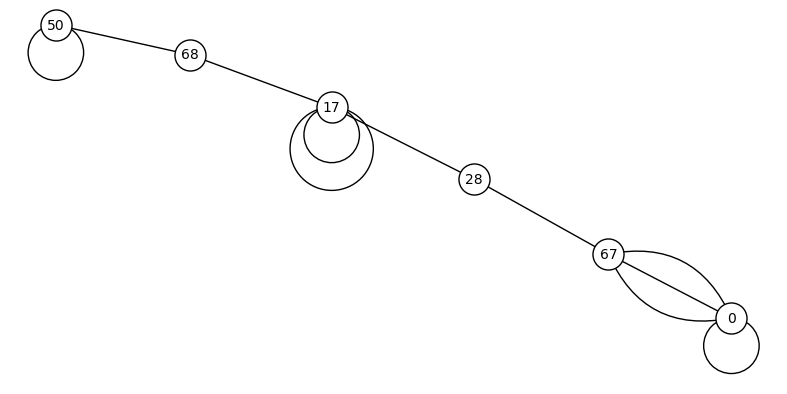}
  \caption{The spine $\FpSubGraph \subset \FpBarGraphthree$ for $p = 83$.}
\end{subfigure}
\caption{Attachment along a $j$-invariant for $p = 83$ and $\ell = 3$. We see that the two connected components of $\FpGraphthree$ are attached along the $j$-invariant $68 = 1728 \mod 83$.}
\label{fig:attachment_vertex}
\end{figure}

Note that it can happen that an attachment is actually attaching the component $V$ to itself. For instance, whenever there is only one component, new edges cannot attach distinct components. See Figure \ref{fig:attachment_same_component}.

\begin{figure}[h!]
\begin{subfigure}{.45\textwidth}
  \centering
  \includegraphics[width=\linewidth]{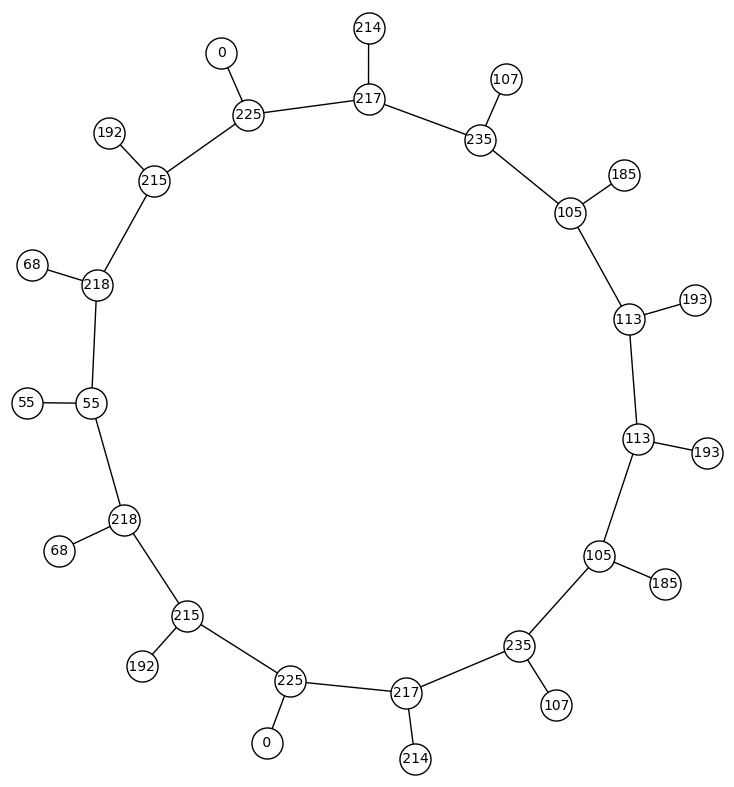}
  \caption{The $\FpGraphtwo$ for $p = 239$}
\end{subfigure}
\begin{subfigure}{.45\textwidth}
  \centering
  \includegraphics[width=\linewidth]{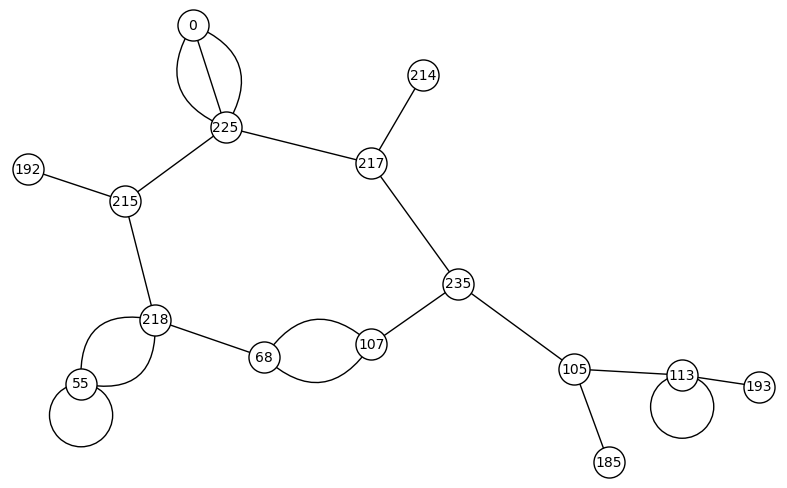}
  \caption{The spine $\FpSubGraph \subset \FpBarGraphtwo$ for $p = 239$.}
\end{subfigure}
\caption{Attachment by an edge that does not attach two distinct components. The vertices with $j$-invariants $68$ and $107$ are joined by a double edge.}
\label{fig:attachment_same_component}
\end{figure}

We now begin analyzing the new edges in $\FpSubGraph$ that do not come from edges in $\FpBarGraph$.  
For any elliptic curve $E$ and $\ell$ prime, $\ell$-isogenies are given by a cyclic subgroup of order $\ell$ of the $\ell$-torsion points $E[\ell]$. Such a subgroup is generated by a point of exact order $\ell$. An $\ell$-isogeny is defined over $\Fp$ if and only if its kernel is defined over $\Fp$ (that is, the kernel is fixed by the Frobenius morphism).

For $2$-isogenies, the kernel consists of the point at infinity, $O_E$ and a point $P \in E[2] \setminus \{ O_E\}$. The $2$-isogeny with kernel $\langle P \rangle$ is defined over $\Fp$ if and only if $P$ is defined over $\Fp$. 

For $\ell > 2$, the point $P$ generating $\ker \phi$ does not have to be defined over $\Fp$, only the whole kernel needs to be fixed by the Frobenius morphism.

\begin{remark}
      Let $E, E'$ be elliptic curves with $j$-invariants $j, j'$ and suppose that there is an edge $[j,j']$ in $\FpBarGraph$. Then there is an $\ell$-isogeny $\phi: E \rightarrow E'$. Even if both $j, j'$ are in $\Fp$, the isogeny $\phi$ is not necessarily defined over $\Fp$. We can see this in Figure \ref{fig:attachment}: the 2-isogenies between $150$ and $189$ in $\FpBarGraphtwo$ are not defined over $\Fp$. Also, the vertex $v_4$ ($4 \equiv 1728 \mod 431$) on the floor of $\FpGraphtwo$ has no edge to a curve with $j$-invariant $19$, but there is an edge $[4, 19] \in \FpBarGraphtwo$ coming from the two isogenies from the vertex $v_4$ on the surface. Moreover, Lemma \ref{lemma:fp_edges_are_rigid} gives us that there is a double edge $[4,19] \in \FpGraphtwo$. This not a coincidence, as we will explain Lemma \ref{lemma:one-two-isogeny}. 
\end{remark}

\begin{lemma}[One new isogeny implies two new isogenies] \label{lemma:one-two-isogeny}
Let $v_a, v_b \in \FpGraph$ correspond to $\Fp$-elliptic curves $E_a$ and $E_b$ with $j$-invariants $a,b$ respectively with $a \neq 1728, 0$. Assume that there is no edge $[v_a, v_b] \in \FpGraph$, but there is an edge $[a,b] \in \FpBarGraph$. Then there are two isogenies defined between $E_a \rightarrow E_b$ which are inequivalent over $\Fpbar$ and hence a double edge $[a,b] \in \FpBarGraph$.

\end{lemma}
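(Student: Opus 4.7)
My plan is to produce the second isogeny from $\phi$ by Frobenius conjugation. Fix any $\overline{\mathbb{F}_p}$-isogeny $\phi\colon E_a\to E_b$ of degree $\ell$ realizing the edge $[a,b]\in\FpBarGraph$; since $E_a$ and $E_b$ are both defined over $\Fp$, the Frobenius conjugate $\phi^{(p)}\colon E_a\to E_b$ is again a degree-$\ell$ isogeny with the same source and target. The task then reduces to showing that $\phi$ and $\phi^{(p)}$ cannot be equivalent over $\overline{\mathbb{F}_p}$; the two resulting classes will then provide the asserted pair of inequivalent isogenies, and hence the double edge.

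For the inequivalence, suppose for contradiction $\phi^{(p)}=\psi\circ\phi\circ\varphi$ for some $\varphi\in\operatorname{Aut}_{\overline{\mathbb{F}_p}}(E_a)$ and $\psi\in\operatorname{Aut}_{\overline{\mathbb{F}_p}}(E_b)$. The hypothesis $a\neq 0,1728$ pins down $\operatorname{Aut}_{\overline{\mathbb{F}_p}}(E_a)=\{\pm 1\}$, and $[\pm 1]$ preserves every finite subgroup of $E_a$; comparing kernels of both sides then yields
\[
\Frob(\ker\phi)\;=\;\ker\phi^{(p)}\;=\;\varphi^{-1}(\ker\phi)\;=\;\ker\phi,
\]
so $\ker\phi$ is an $\Fp$-rational subgroup scheme of $E_a[\ell]$. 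Consequently the quotient $\pi\colon E_a\to E':=E_a/\ker\phi$ is defined over $\Fp$, $E'$ is an $\Fp$-supersingular curve with $j(E')=b$, and $\phi$ factors uniquely as $\phi=\alpha\circ\pi$ for an $\overline{\mathbb{F}_p}$-isomorphism $\alpha\colon E'\to E_b$.

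The final step turns this into a contradiction with the absence of the edge $[v_a,v_b]$ in $\FpGraph$. If $\alpha$ happens to be $\Fp$-rational then $\phi=\alpha\circ\pi$ is itself $\Fp$-rational and gives exactly the forbidden edge $[v_a,v_b]$. Otherwise $E'$ is $\Fp$-isomorphic to the unique nontrivial quadratic twist $E_b^t$ (the only alternative since $b\neq 0,1728$), and $\pi$ is then an $\Fp$-isogeny from $v_a$ to the vertex $w_b$ of $\FpGraph$ corresponding to that other twist. This ``stray twist'' subcase is the main obstacle: the literal hypothesis ``no edge $[v_a,v_b]$'' does not on its own exclude an $\Fp$-edge $[v_a,w_b]$, and it has to be dispatched by reading the hypothesis in the spirit of the surrounding discussion of new/attaching edges, i.e.\ assuming no $\Fp$-isogeny from $E_a$ to \emph{any} curve with $j$-invariant $b$ exists. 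With that reading both subcases contradict the hypothesis, so $\phi\not\sim_{\overline{\mathbb{F}_p}}\phi^{(p)}$ and the double edge $[a,b]\in\FpBarGraph$ follows.
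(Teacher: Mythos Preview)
Your approach is essentially the same as the paper's: produce the second isogeny by Frobenius conjugation and argue inequivalence via kernels. The paper phrases it in the contrapositive direction (``$\ker\psi$ is not $\Fp$-rational, hence $\psi^{\mathrm{Frob}}$ has a different kernel and is inequivalent''), while you phrase it as (``if $\phi\sim\phi^{(p)}$ then $\ker\phi$ is $\Fp$-rational, contradiction''), but the content is identical.

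You are in fact more careful than the paper on one point. The paper's proof asserts that if $\ker\psi$ were $\Fp$-rational then $\psi$ would be defined over $\Fp$, immediately contradicting the absence of the edge $[v_a,v_b]$. As you correctly observe, this step skips over the possibility that the $\Fp$-quotient $E_a/\ker\psi$ is $\Fp$-isomorphic to the twist $E_b^t$ rather than to $E_b$ itself; in that case one obtains an $\Fp$-edge $[v_a,w_b]$, which the literal hypothesis does not exclude. The paper glosses over this, while you flag it explicitly. Your resolution---reading the hypothesis as ``no $\Fp$-isogeny from $E_a$ to any curve with $j$-invariant $b$''---is the right one, and is exactly the hypothesis in force in every application of the lemma in the paper (e.g.\ Lemma~\ref{lemma:neighbors_ell_large}, Proposition~\ref{prop:neighbour_set}, Proposition~\ref{prop:possible_attachments}).
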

\begin{proof}
We know that there is an $\ell$-isogeny $\phi: E_a \rightarrow E$ to some elliptic curve $E$ with $j$-invariant $b$. Since $j(E) = b$, then $E$ is isomorphic to $E_b$ over $\F_{p^2}$. Composing with this isomorphism, we obtain an $\ell$-isogeny $\psi: E_a \rightarrow E_b$. However, $\psi$ cannot be defined over $\Fp$, since we assumed there was no edge $[v_a, v_b] \in \FpGraph$. 

The kernel of $\psi$ is not defined over $\Fp$ (otherwise $\psi$ would be defined over $\Fp$), so the $p$-power Frobenius map $\text{Frob}:\Fp\to \Fp$ does not preserve $\ker \psi$. There is an isogeny from $E_a$ with kernel $\psi^\text{Frob}$. This isogeny has degree $\ell$ since $\psi^\text{Frob}$ has order $\ell$ and it is not equivalent to $\psi$. 
Using the construction of isogenies from V\'elu's formulae, we obtain the rational maps for defining $\psi^\text{Frob}$. In particular, the $j$-invariant of the target of $\psi^\text{Frob}$ is necessarily $\text{Frob}(b)=b$. Hence, there are two inequivalent isogenies between $E_a$ and $E_b$ and hence two edges $[a,b] \in \FpBarGraph$.

Note that we cannot simply compose with Frobenius, because that would give us an inseparable isogeny with degree $\ell \cdot p$. 
\end{proof}

The corollary below explains why for both attachment by a new edge (cf. Figure \ref{fig:attachment} and Figure \ref{fig:attachment_same_component179}) and attachment along a $j$-invariant (cf. Figure \ref{fig:attachment_vertex}), we always see double edges.

\begin{cor}\label{cor:attachment_means_double_edge}
Attachment of components from $v_a$ to $w_b$ forces a double edge $[a,b]$ in $\FpBarGraph$.
Attachment along the $j$-invariant $\jp$ implies a double edge from $\jp$ in $\FpBarGraph$.
\end{cor}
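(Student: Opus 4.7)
The plan is to reduce both statements to Lemma \ref{lemma:one-two-isogeny}, which takes a non-$\Fp$-rational $\ell$-isogeny between $\Fp$-curves whose $j$-invariants are connected by an edge in $\FpBarGraph$ and upgrades it into a second $\Fpbar$-inequivalent isogeny, hence a double edge.

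For the first statement, I would observe that the hypothesis of the lemma is essentially the definition of attachment by a new edge. By Definition \ref{def:attaching}, the new edge $[a,b]\in\FpBarGraph$ is witnessed by an $\Fpbar$-isogeny $E_{v_a}\to E_{w_b}$ that is not $\Fp$-rational, so in particular there is no edge $[v_a,w_b]$ in $\FpGraph$. Applying Lemma \ref{lemma:one-two-isogeny} directly to the pair $E_{v_a},E_{w_b}$ then produces a second $\Fpbar$-inequivalent $\ell$-isogeny and hence the double edge $[a,b]$.

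For the second statement, I would first observe that the $\Fp$-rational isogeny $E_{v_a}\to E_{v_b}$, coming from the neighbor $v_b$ of $v_a$ supplied by the attachment definition, already gives rise to an edge $[a,b]$ in $\FpBarGraph$, since in $\FpBarGraph$ the vertices $v_a,w_a$ collapse to the single vertex $a$ (and $v_b,w_b$ to $b$). I would then apply Lemma \ref{lemma:one-two-isogeny} to the other pair $E_{w_a},E_{w_b}$: by hypothesis of attachment along $a$, these are not $\Fp$-isogenous, yet the edge $[a,b]\in\FpBarGraph$ already exists. The lemma then furnishes a second $\Fpbar$-inequivalent $\ell$-isogeny $E_{w_a}\to E_{w_b}$, producing a double edge emanating from $\jp=a$.

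The hard part will be handling the values $a\in\{0,1728\}$ excluded from Lemma \ref{lemma:one-two-isogeny}. For attachment along a $j$-invariant with $\ell>2$, I expect these to be essentially the only cases where such attachment can occur: for $a\notin\{0,1728\}$, a short calculation tracking Frobenius under the quadratic twist isomorphism shows that the $\Fp$-rational cyclic $\ell$-subgroups of $E_{v_a}$ and $E_{w_a}$ are in canonical bijection, so $v_a$ and $w_a$ must share neighbors up to twist and attachment along $a$ cannot happen. The exceptional cases would then be handled by direct inspection of $\Phi_\ell(a,Y)$, where the extra automorphisms at $a\in\{0,1728\}$ in fact force even greater edge multiplicity (in the spirit of Corollary \ref{cor:double_edges_for_ell_2}) rather than destroying the double edge, so the conclusion is only strengthened.
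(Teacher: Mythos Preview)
Your first two paragraphs are exactly the paper's proof: it applies Lemma~\ref{lemma:one-two-isogeny} directly in both cases, with the second case using the pair $(w_\jp, w_b)$ precisely as you do. That is the entire argument in the paper.

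Your third paragraph is unnecessary and goes well beyond what the corollary calls for. The paper's proof simply does not address the restriction $a\notin\{0,1728\}$ from Lemma~\ref{lemma:one-two-isogeny}; it applies the lemma and moves on. More importantly, the claim you sketch there --- that for $\ell>2$ and $a\notin\{0,1728\}$ the twists $v_a,w_a$ must share neighbours up to twist, so attachment along $a$ cannot occur --- is a substantive structural result that the paper only establishes later and by quite different means (Proposition~\ref{prop:neighbour_set} and Corollary~\ref{cor:attach_j_invariant_2} for $\ell=2$; Corollary~\ref{cor:neighbours_twists_3} for $\ell=3$, the latter proved via the double-edge machinery rather than your direct twist calculation). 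Inserting that argument here would short-circuit a good deal of the paper's Section~\ref{sec:stacking_folding_attaching_large}. For this corollary you should simply drop the third paragraph; the exceptional $j$-invariants are handled ad hoc where they actually arise.
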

\begin{proof}
In the first case, we are adding an edge between $v_a$ and $w_b$ that is not defined over $\Fp$ and we can directly apply Lemma \ref{lemma:one-two-isogeny}. 

In the second case, we assume that there is a neighbour $v_b$ of $v_\jp$ such that $w_b$ is not a neighbour of $w_\jp$. Applying the Lemma \ref{lemma:one-two-isogeny} to the $\Fpbar$ isogeny from $w_\jp$ to $v_b$, we obtain a double edge $[\jp,b] \in \FpBarGraph$.
\end{proof}

The next step in understanding $\FpSubGraph \subset \FpBarGraph$ is understanding the neighbours of the two vertices $v_a, w_a$ corresponding to the same $j$-invariant. This is done in Lemma \ref{lemma:neighbors_ell_large} for $\ell > 2$ and in Lemma \ref{prop:neighbour_set} for $\ell = 2$. The case $\ell = 2$ is more involved because in this case, there exist vertical isogenies (if $p \equiv 3 \mod 4$), whereas for $\ell > 2$, all isogenies are horizontal.

\subsection{Stacking, folding and attaching for $\ell > 2$}
\label{sec:stacking_folding_attaching_large}

In this section, we consider the spine $\FpSubGraph$ for $\ell >2$. In this case, there are no vertical isogenies, hence the graph $\FpGraph$ is a union of disjoint cycles: The cycles of vertices corresponding to curves either only with endomorphism ring $\Z\left[\frac{1+\sqrt{-p}}{2} \right]$, or only with endomorphism ring $\Z[\sqrt{-p}]$. 

We will avoid on the case when the graph $\FpGraph$ is just a disjoint union of vertices (i.e., when there are no isogenies defined over $\Fp$). It suffices to assume that $p \equiv -1 \mod \ell$ (when $\ell| \# E(\Fp) = p+1$, there are $\Fp$-rational points of order $\ell$).

\begin{lemma}[The neighbour lemma]\label{lemma:neighbors_ell_large}
Suppose that $\ell >2$. Suppose that $v_a, w_a$ are the two vertices in $\FpGraph$ corresponding to elliptic curves with $j$-invariant $a$ and such that the neighbours of $v_a$ have $j$-invariants $b,c$ and the neighbours of $w_a$ have $j$-invariants $c,d$.

Then either $\{b,c\} = \{c,d\}$ with $b\neq c$ or there is a double edge from $a$ in $\FpBarGraph$.
\end{lemma}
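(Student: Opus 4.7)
The plan is to argue by contrapositive: assume $\{b,c\}\neq\{c,d\}$ and deduce a double edge from $a$ in $\FpBarGraph$.

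First, I would handle the degenerate cases $b=c$ and $c=d$. If $b=c$, then $v_a$ carries two distinct $\Fp$-edges in $\FpGraph$, both landing at a vertex with $j$-invariant $c$; by Lemma~\ref{lemma:fp_edges_are_rigid} these remain inequivalent in $\FpBarGraph$ and give a double edge $[a,c]$. The case $c=d$ is symmetric, so from here on I may assume $b,c,d$ are pairwise distinct with $b\neq d$.

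Fix any $\Fpbar$-isomorphism $\alpha\colon E_{v_a}\to E_{w_a}$, and transport all four $\Fp$-isogeny kernels into $E_{v_a}[\ell]$. The two $\Fp$-isogenies out of $v_a$ give the two $\pi_{v_a}$-stable subgroups $K_b,K_c$ with quotient $j$-invariants $b,c$, while those out of $w_a$ pull back to $\alpha^{-1}(L_c),\alpha^{-1}(L_d)$ with quotient $j$-invariants $c,d$. Since $\mathrm{Aut}(E_{v_a})=\{\pm1\}$ for $a\neq 0,1728$ acts trivially on subgroups of $E_{v_a}[\ell]$, each such subgroup corresponds to a distinct edge out of $a$ in $\FpBarGraph$. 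For a quadratic twist one has $\alpha^{-1}\pi_{w_a}\alpha=-\pi_{v_a}$ (because the twist cocycle is $\sigma\mapsto[-1]$), and since $-\pi_{v_a}$ shares its eigenspaces on $E_{v_a}[\ell]$ with $\pi_{v_a}$, the set equality $\{K_b,K_c\}=\{\alpha^{-1}(L_c),\alpha^{-1}(L_d)\}$ follows; matching the four subgroups by their quotient $j$-invariants then yields $\{b,c\}=\{c,d\}$, contradicting $b\neq d$. Otherwise the four transported subgroups do not reduce to two, and in particular $K_c\neq\alpha^{-1}(L_c)$ provides two inequivalent subgroups both with quotient $j$-invariant $c$, producing the desired double edge $[a,c]$.

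The main obstacle is the exceptional case $a\in\{0,1728\}$, where the twist is non-quadratic (sextic at $j=0$, quartic at $j=1728$) and $\mathrm{Aut}(E_a)$ is larger, so that distinct subgroups of $E_a[\ell]$ could potentially be identified via an extra automorphism as the same edge in $\FpBarGraph$. I would handle this by using Example~\ref{example:j=0_always_on_the_floor} together with Example~\ref{cor:endo_of_1728} to restrict which levels of the volcano the special $j$-invariants sit on, and then explicitly computing the action of $\mathrm{Aut}(E_a)$ and of the higher-order twist cocycle on $E_a[\ell]$; this verifies that the two kernels landing at $c$ still yield inequivalent edges, so the double edge conclusion persists.
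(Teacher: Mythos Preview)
Your argument is correct in the main case $a \notin \{0,1728\}$, but it is considerably more elaborate than the paper's proof, which is essentially one line: assume $d \neq b,c$; the edge $[w_a,w_d]$ in $\FpGraph$ gives an edge $[a,d]$ in $\FpBarGraph$, yet no $\Fp$-isogeny from $v_a$ lands at a curve with $j$-invariant $d$, so Lemma~\ref{lemma:one-two-isogeny} immediately produces a double edge $[a,d]$.

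The difference in approach is meaningful. You work directly with the two Frobenius-stable order-$\ell$ subgroups of $E_{v_a}$ and transport them via the twist isomorphism, using the cocycle relation $\alpha^{-1}\pi_{w_a}\alpha = -\pi_{v_a}$ to force the neighbour sets to agree. This actually establishes more than the lemma asks: for $a \neq 0,1728$ you show the first alternative \emph{always} holds once $b\neq c$ and $c\neq d$, which is essentially the content of Corollary~\ref{cor:neighbours_twists_3}. The paper instead proves only the disjunction and defers that stronger statement, trading directness for brevity by invoking Lemma~\ref{lemma:one-two-isogeny} as a black box. Your explicit treatment of the degenerate cases $b=c$ and $c=d$ via Lemma~\ref{lemma:fp_edges_are_rigid} is also something the paper's proof glosses over.

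Your handling of $a \in \{0,1728\}$, on the other hand, is sketchy: the sentence beginning ``Otherwise the four transported subgroups do not reduce to two'' does not follow from anything you have argued, and the proposed remedy of computing explicit automorphism actions on $E_a[\ell]$ is not carried out. The paper is no better on this point (Lemma~\ref{lemma:one-two-isogeny} itself carries the hypothesis $a \neq 1728,0$), so this is a shared loose end rather than a defect specific to your write-up.
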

\begin{proof}
Suppose that $d \neq b, c$. Since there is an edge in $[a,d] \in \FpBarGraph$ corresponding to the edge $[w_a, w_d] \in \FpGraph$, there is an isogeny from the elliptic curve $v_a$ to an elliptic curve with $j$-invariant $d$. This isogeny cannot be defined over $\Fp$ since the neighbours of $v_a$ in $\FpGraph$ have $j$-invariants $b,c$. This gives at least two edges $[a,d] \in \FpBarGraph$, by Lemma \ref{lemma:one-two-isogeny}.
\end{proof}

\begin{cor}
An attachment along a $j$-invariant $a$ implies a double edge from $a$ in $\FpBarGraph$.
\end{cor}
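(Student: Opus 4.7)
The plan is to derive this corollary directly from the neighbour lemma (Lemma \ref{lemma:neighbors_ell_large}), showing that the attachment condition is precisely the negation of that lemma's first alternative.

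First, I unpack the definition of attachment along $a$: there exist vertices $v_a \in V$ and $w_a \in W$ both with $j$-invariant $a$, and a neighbour $v_b$ of $v_a$ of $j$-invariant $b$ such that no neighbour of $w_a$ in $\FpGraph$ has $j$-invariant $b$. Since $\ell > 2$ and we are in the regime where $\FpGraph$ has edges (i.e.\ $\left(\tfrac{-p}{\ell}\right) = 1$), every vertex of $\FpGraph$ has exactly two neighbours, all coming from horizontal isogenies. So the neighbours of $v_a$ have $j$-invariants $\{b, c\}$ and those of $w_a$ have $j$-invariants $\{c', d'\}$, with $b \notin \{c', d'\}$ by the attachment hypothesis.

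Next, I invoke Lemma \ref{lemma:neighbors_ell_large}. Its first alternative $\{b, c\} = \{c, d\}$ (in the notation of the lemma, after relabeling so that the common neighbour is called $c$) would force $b$ to appear as the $j$-invariant of some neighbour of $w_a$; this is exactly what the attachment hypothesis forbids. Hence the other alternative must hold, producing a double edge from $a$ in $\FpBarGraph$.

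Alternatively, one can bypass the neighbour lemma and apply Lemma \ref{lemma:one-two-isogeny} directly: the $\Fp$-edge $[v_a, v_b]$ descends to an edge $[a, b] \in \FpBarGraph$, so the elliptic curve corresponding to $w_a$ admits an $\Fpbar$-isogeny to a curve with $j$-invariant $b$, but this isogeny is not defined over $\Fp$ (else $w_a$ would have a neighbour of $j$-invariant $b$ in $\FpGraph$). Lemma \ref{lemma:one-two-isogeny} then produces a second $\Fpbar$-inequivalent isogeny, yielding the double edge $[a, b]$ in $\FpBarGraph$.

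I do not foresee any substantive obstacle: the corollary is essentially a repackaging of the neighbour lemma. The only minor care is the standard caveat $a \neq 0, 1728$ required by Lemma \ref{lemma:one-two-isogeny}, which would need a brief separate analysis at those special $j$-invariants via their extra automorphisms if one wishes to cover those cases explicitly.
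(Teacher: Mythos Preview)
Your proposal is correct and matches the paper's approach: the paper's one-line proof simply points to the definition of attachment along a $j$-invariant and (implicitly) invokes the neighbour lemma, exactly as in your first argument, while your alternative via Lemma \ref{lemma:one-two-isogeny} is precisely how the paper proved the earlier, more general Corollary \ref{cor:attachment_means_double_edge}. Your remark about the caveat $a \neq 0, 1728$ is well taken and is a detail the paper leaves implicit.
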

\begin{proof}
See Definition \ref{def:attaching_along_j}: At least one neighbour of $v_a$ is distinct from the neighbours of $w_a$.
\end{proof}
The main result of this section is the following result.

\begin{prop}[Stacking, folding and attaching for $\ell > 2$] \label{theorem:SFAA_for_ell_large}
While passing from $\FpGraph$ to $\FpSubGraph$, the only possible events are stacking, folding, and $n$ attachments by a new edge and $m$ attachments along a $j$-invariant with $m +2n \leq 2 \ell (2 \ell -1)$. 
\end{prop}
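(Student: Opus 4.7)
The plan is to follow the two-step passage $\FpGraph \to \FpSubGraph$ described at the start of Section \ref{subsec:folding-stacking-attaching}: first identify the vertices $v_a, w_a$ of $\FpGraph$ that share a $j$-invariant, then add those edges of $\FpBarGraph$ coming from isogenies not defined over $\Fp$. For $\ell > 2$, Theorem \ref{theorem:volcanos} guarantees that every component of $\FpGraph$ is a horizontal cycle, and every supersingular $j$-invariant $a \in \Fp$ with $a \neq 0, 1728$ corresponds to exactly two vertices $v_a, w_a$.

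For the identification step I would examine each $j$-invariant $a$ in the spine separately. By Lemma \ref{lemma:neighbors_ell_large}, either the multiset of neighbour $j$-invariants of $v_a$ in $\FpGraph$ agrees with that of $w_a$, or there is a double edge from $a$ in $\FpBarGraph$. In the matching case, the identification either contributes to a folding of the component (if $v_a, w_a$ lie on the same component $V$), or, after carrying this out at every common $j$-invariant, exhibits a graph isomorphism $V \cong W$ between two distinct components upon relabeling vertices by their $j$-invariants --- that is, $V$ and $W$ stack in the sense of Definition \ref{def:stacking}. Any failure of matching is, by Definition \ref{def:attaching_along_j}, an attachment along the $j$-invariant $a$. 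The second step adds only new edges, and every such edge is by definition an attachment by a new edge. These four phenomena therefore exhaust what can occur when passing from $\FpGraph$ to $\FpSubGraph$.

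For the quantitative bound, Corollary \ref{cor:attachment_means_double_edge} (applying Lemma \ref{lemma:one-two-isogeny}) shows that every attachment along $a$ forces $a$ to be the endpoint of a double edge in $\FpBarGraph$, and every attachment by a new edge joining $v_a$ and $w_b$ forces both $a$ and $b$ to be endpoints of a double edge. Each such endpoint is a root of the polynomial $\Res_\ell(X)$ of Lemma \ref{lemma:double_edge_lemma}, which has degree at most $2\ell(2\ell-1)$. The $m$ attachments along $j$-invariants contribute $m$ distinct roots, and the $n$ attachments by new edges contribute $2n$ further endpoint-incidences, yielding $m + 2n \leq 2\ell(2\ell-1)$.

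The main obstacle is the book-keeping in this last step: I would need to verify that the $m+2n$ incidences inject into the multiset of roots of $\Res_\ell(X)$, so that distinct attachments of the same type produce distinct witnesses and no single $j$-invariant is over-counted. Distinctness for attachments along $j$-invariants is immediate from Definition \ref{def:attaching_along_j}; for attachments by new edges it reduces to the fact that each new edge is a distinct unordered pair $\{a,b\}$, together with the crude observation that each vertex of $\FpBarGraph$ has degree $\ell+1$ and so can carry only boundedly many double edges, keeping the total incidence count within the degree bound on $\Res_\ell(X)$ supplied by Lemma \ref{lemma:double_edge_lemma}.
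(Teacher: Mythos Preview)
Your proposal is correct and follows essentially the same route as the paper: use Lemma~\ref{lemma:neighbors_ell_large} to reduce everything away from double-edge vertices to stacking or folding, then invoke Lemma~\ref{lemma:one-two-isogeny}/Corollary~\ref{cor:attachment_means_double_edge} to force both kinds of attachment to occur at roots of $\Res_\ell(X)$ and bound via Lemma~\ref{lemma:double_edge_lemma}. Your final paragraph on injectivity of the $m+2n$ incidences into the root multiset is in fact more scrupulous than the paper's own argument, which simply asserts the bound without tracking overcounting; the paper treats this as a coarse upper bound rather than a sharp count, so your concern is well placed but does not reflect a gap relative to the original.
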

\begin{proof}
Suppose that $v_a$ is a vertex of $\FpBarGraph$ such that $a$ does not admit a double edge in $\FpGraph$. Then the neighbours of $v_a$ and $w_a$ (its twist) are the same by $\ref{lemma:neighbors_ell_large}$. The connected components of $v_a$ and $w_a$ look the same locally at $a$. 

Suppose further that the connected component $V \subset \FpGraph$ does not contain any vertex that admits a double edge. By Lemma \ref{lemma:neighbors_ell_large}, every vertex $v_a$ has the same neighbours as its twist $w_a$, so the component either folds (if $w_a \in V$) or stacks with the component $W$ of $w_a$, which is necessarily identical to $V$ when we replace the labels of the vertices by their $j$-invariants.

By Definition \ref{def:attaching}, attachment happens when we add an edge that cannot be defined over $\Fp$. By \ref{lemma:one-two-isogeny}, attachments necessarily imply double edge. Attachment along a $j$-invariant $a$ also implies that there is a double edge from $a$. However, double edges can only occur at $j$-invariants which are roots of 
 \begin{align*}
     \operatorname{Res}\left( \Phi_
\ell(X,Y), \frac{d}{dY}\Phi_\ell(X,Y); \,\, Y 
\right),
 \end{align*} which is a polynomial of degree bounded by $2\ell (2 \ell - 1)$ by Lemma \ref{lemma:double_edge_lemma}.
 Therefore, except at vertices corresponding to $j$-invariants that admit double edges, the components will either stack or fold. Even in components containing vertices that admit double edges, all other pairs of vertices corresponding to the same $j$-invariant will either stack onto each other or, if they share a neighbour, fold onto each other. See \ref{fig:attachment_vertex}.
 
 Finally, for attachment by an edge $[a,b] \in \FpBarGraph$, both endpoints admit a double edge in $\FpBarGraph$, hence both $a$ and $b$ are roots of $\Res_\ell(X)$. Since the degree of $\Res_\ell(X)$ is bounded by $2\ell \cdot (2\ell - 1)$, we obtain the bound.
\end{proof}
For any given $\ell$, we know the possible attachments: $\Res_\ell(X)$ is a product of Hilbert class polynomials, so having roots in $\Fp$ which give supersingular $j$-invariants is equivalent to satisfying certain congruence conditions on $p$. We can construct primes $p$ to avoid attachments.

Typically, the polynomial $\Res_\ell(X)$ will be smooth and have lots of repeated factors, so for any given choice of $\ell$, the bound in Proposition \ref{theorem:SFAA_for_ell_large} can be made more precise, which we will show in the following section.

\subsubsection{Example: stacking, folding and attaching for $\ell = 3$} \label{sec:stacking_folding_attaching_3}

In this section, we study the stacking, folding and attaching behaviour for $\ell = 3$. The case $\ell = 2$ will be discussed in Section \ref{subsec:stacking_folding_attaching_2}. The case $\ell = 3$ is cryptographically relevant, because of the use of $\FpBarGraphthree$ in SIDH and SIKE. Moreover, keeping $\ell$ small, we can give better bounds on the number of attachments and explain the results of the previous section in a more hands-on manner.

We start with factoring over $\Z$ the polynomial $\Res_3(x)$ introduced in \eqref{polynomial:double_edges}:
\begin{align*}
 \Res_3(x ) = &   \left(-1\right) \cdot 3^{3} \cdot x^{2} \cdot (x - 8000)^{2} \cdot (x - 1728)^{2} \cdot (x + 32768)^{2} \cdot (x^{2} - 52250000 x + 12167000000)^{2} \\
  &  \cdot (x^{2} - 1264000 x - 681472000)^{2} \cdot (x^{2} + 117964800 x - 134217728000)^{2}.
\end{align*} The irreducible factors are Hilbert class polynomials of discriminants $-3, -8, -4, -11, -32, -20 $ and $-35$, respectively. Removing the repeated factors, we see that there are at most $10$ vertices at which a double edge can occur. 

Double-edges also arise in loops (double self-3-isogenies), which accounts for some of the factors of $\Res_3(x)$. We find the self loops by factoring the modular 3-isogeny polynomial:
\begin{align*}
   \Phi_3(x,x) =  \left(-1\right) \cdot x \cdot (x - 54000) \cdot (x - 8000)^{2} \cdot (x + 32768)^{2}.
\end{align*}
At $\jp = 8000$ and $\jp = -32768$, there are two self-3-isogenies and no attachment at these vertices.

\begin{example}[Neighbours of the vertices with loops] \label{example:neighbours_loops_ell3}
In this example, we determine the $\FpGraphthree$ neighbours of $\jp = 0, 8000, 54000$ and $-32768$.
This is done by factoring $\Phi_3(\jp,x)$:
\begin{enumerate}
    \item $\jp =0$: $\Phi_3(0,x )= {\left(x + 12288000\right)}^{3} \cdot x$. From this we conclude, that there is an isogeny $v_0,w_0$ that is defined over $\Fp$ (the factor $x$ has multiplicity 1, indicating this is not a double-edge and thus cannot appear only over $\mathbb{F}_{p^2}$). Hence, the neighbours of $v_0$ are $w_0$ and $v_{-12288000}$ and the neighbours of $w_0$ are $w_{-12288000}$ and $v_0$. Moreover, the edges to $-12288000$ that are not defined over $\Fp$ will not be attaching edges.
    
    As an aside, we note that since all isogenies in $\FpGraph$ for $\ell > 2$ are horizontal and $\End_{\Fp}(E_0) \cong \Z[\sqrt{-p}]$, it follows that $\End_{\Fp}E_{-12288000} \cong \Z[\sqrt{-p}]$.
\begin{figure}[h!]
  \centering
  \includegraphics[width=120mm]{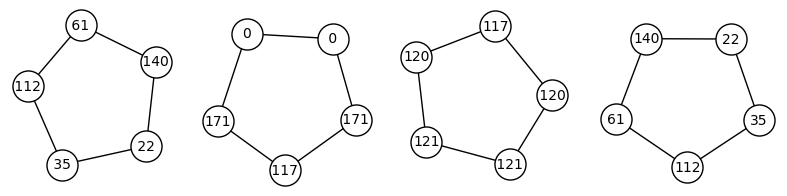}
\caption{The graph $\FpGraphthree$ for $p = 179$. We see that the neighbours of vertices with $j$-invariant $0$ both have $j$-invariant $-12288000 \mod 179 = 171$.}
\label{fig:attachment_same_component179}
\end{figure}
    
    \item $\jp = 54000$: There is one self-3-isogeny which arises from a 3-isogeny $\psi$ between (non-isomorphic) quadratic twists with $j = 54000$. Explicitly, let $E_{54000}:y^2 = x^{3} - 15 x + 22$, $E_{54000}':y^2 = x^{3} - 135 x - 594$. $\psi$ is given:
    \begin{align*}
        \psi = \left(\frac{x^{3} - 6 x^{2} + 33 x - 56}{x^{2} - 6 x + 9}, \frac{x^{3} y - 9 x^{2} y + 3 x y + 13 y}{x^{3} - 9 x^{2} + 27 x - 27}\right)
    \end{align*} that reduces modulo any prime $p$ to a $3$-isogeny over $\Fp$.

    Moreover, $\phi_3(54000,x)$ factors as \begin{align*}
        (x - 54000) \cdot (x^{3} - 151013228706000x^{2} + 224179462188000000x - 1879994705688000000000),
    \end{align*} so (for $p$ large enough) the $j$-invariant $54000$ cannot admit a double edge.
     
    \item $\jp = 8000$: factor $\Phi_3(8000, x) = {\left(x^{2} - 377674768000 \, x + 232381513792000000\right)} {\left(x - 8000\right)}^{2}$. There is a double loop $[8000,8000]\in \FpBarGraphthree$. Neither of the loops occur over $\Fp$: both cannot occur over $\Fp$ because there are no double edges in $\FpGraphthree$. If only one of them came from an isogeny over $\Fp$, we could use Lemma \ref{lemma:one-two-isogeny} to get a third loop, which is not possible (for large $p$).
    
    \item $\jp = -32768$: $\Phi_3(-32768,x) = (x^2 + 37616060956672\cdot x - 56171326053810176)\cdot (x + 32768)^2$. Repeating the argument we gave above for $\jp = 8000$, the self loops cannot come from isogenines over $\Fp$.
\end{enumerate}
To conclude: For $\jp =0, 54000$, the self-3-isogeny comes from an isogeny between the twists in $\FpGraphthree$, and for $\jp = 8000, -32768$, the double self-3-isogenies are not defined over $\Fp$.
\end{example}

The following lemma shows that we can distinguish attachment along a $j$-invariant and an attachment by a new edge looking at the neighbours of the given $j$-invariants.

\begin{lemma}[Attaching for $\ell = 3$] \label{lemma:attaching_ell_3} \begin{enumerate}
    \item Let $a$ be an attaching $j$-invariant. Then the neighbours of $v_a$ have the same $j$-invariant $b$ and induce a double edge $[a,b] \in \FpBarGraphthree$ and the neighbours of $w_a$ have the same $j$-invariant $c$ and induce a double edge $[a,c] \in \FpBarGraphthree$, with $b \neq c$.

    \item Let $[a,b]$ be an attaching edge in $\FpBarGraphthree$. Suppose that the neighbours of $v_a$ have $j$-invariants $c,d$. Then the neighbours of $w_a$ have $j$-invariants $c,d$. Necessarily $c \neq d$.
    \end{enumerate}
\end{lemma}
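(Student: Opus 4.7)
The plan is to prove both parts by counting edges at the vertex $a$ in $\FpBarGraphthree$, whose total degree is $\ell + 1 = 4$, using Lemma \ref{lemma:fp_edges_are_rigid}, Lemma \ref{lemma:one-two-isogeny}, and the twist-rigidity of $\Fp$-isogenies (the quadratic twist of an $\Fp$-isogeny is $\Fp$-defined whenever no endpoint has a special $j$-invariant in $\{0, 1728\}$).

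For Part 1, first apply Corollary \ref{cor:attachment_means_double_edge} to deduce that attaching along $a$ forces a double edge from $a$ in $\FpBarGraphthree$. By the attachment-along-$j$ hypothesis (Definition \ref{def:attaching_along_j}) together with Lemma \ref{lemma:neighbors_ell_large}, there is a neighbour $j$-invariant $b$ of $v_a$ that does not appear among the neighbour $j$-invariants of $w_a$, so applying Lemma \ref{lemma:one-two-isogeny} from the $w_a$-side produces the double edge $[a,b]$ in $\FpBarGraphthree$. A symmetric application from the $v_a$-side, using a neighbour $j$-invariant $c$ of $w_a$ not shared with $v_a$, produces a second double edge $[a,c]$ with $c \neq b$. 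The two double edges exhaust the degree-$4$ budget at $a$, so the two $\Fp$-isogenies from $v_a$ (which are $\Fpbar$-inequivalent by Lemma \ref{lemma:fp_edges_are_rigid}) must both sit in the $\Fpbar$-class of $[a,b]$; hence both have target $j$-invariant $b$, giving a double $\Fp$-edge $[v_a, v_b]$ in $\FpGraphthree$. The symmetric argument for $w_a$ yields a double $\Fp$-edge $[w_a, w_c]$.

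For Part 2, begin with the double edge $[a,b]$ produced by Lemma \ref{lemma:one-two-isogeny} from the attaching-edge hypothesis, which occupies $2$ of the $4$ edge slots at $a$. The two $\Fp$-isogenies from $v_a$ contribute two distinct edges $[a,c]$ and $[a,d]$ in the remaining slots by Lemma \ref{lemma:fp_edges_are_rigid}. To see $c \neq d$: if $c = d$, then $v_a$ would have a double $\Fp$-edge to $v_c$ and (by twist rigidity, away from $\{0, 1728\}$) so would $w_a$ to $w_c$; then the two components containing $v_a$ and $w_a$ would each be a $2$-cycle with $j$-invariant set $\{a, c\}$, so the neighbour $j$-invariant sets of $v_a$ and $w_a$ would match and we would be in the attachment-along-$j$ scenario of Part 1 rather than the Part 2 scenario, contradicting the hypothesis. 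Hence $c \neq d$, and twist rigidity then forces $w_a$'s two $\Fp$-neighbours to have the same pair of $j$-invariants $\{c, d\}$.

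The main obstacle I foresee is the careful treatment of the special $j$-invariants in $\{0, 1728\}$, where extra automorphisms cause Lemma \ref{lemma:fp_edges_are_rigid} to fail and twists become quartic or sextic rather than quadratic, so the ``twist of $\Fp$-isogeny is $\Fp$-isogeny'' step breaks (cf.~Example \ref{cor:endo_of_1728}, where $v_{1728}$ and $w_{1728}$ lie on different levels of $\FpGraphtwo$). These exceptional cases have to be handled by direct inspection of $\Phi_3(a, Y)$ and of the list of roots of $\Res_3(X)$ at the beginning of Section \ref{sec:stacking_folding_attaching_3}, together with the self-loop analysis of Example \ref{example:neighbours_loops_ell3}; in particular, the attachment point $a = 1728$ appearing in Figure \ref{fig:attachment_vertex} can be analysed by how the CM automorphism $[i] \in \operatorname{Aut}(E_{1728})$ pairs the four order-$3$ subgroups of $E_{1728}[3]$ into the two $j$-targets predicted by Part 1.
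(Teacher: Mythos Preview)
Your overall strategy for both parts matches the paper's proof exactly: apply Lemma~\ref{lemma:one-two-isogeny} from each twist to produce double edges, then count against the degree-$4$ budget at $a$ in $\FpBarGraphthree$. The paper's argument for Part~2 is in fact even terser than yours: it just lists the four edges $[a,b],[a,b],[a,c],[a,d]$, observes that $w_a$'s $\Fp$-neighbours cannot have $j$-invariant $b$, and concludes they must be $w_c,w_d$.

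Your attempt to justify ``necessarily $c\neq d$'' in Part~2, however, does not work as written. If $c=d$, the two $\Fp$-neighbours of $v_a$ are two \emph{distinct vertices} with the same $j$-invariant $c$, namely $v_c$ and $w_c$, not a double edge to a single vertex $v_c$; the components of $\FpGraphthree$ are simple cycles, so there are no double $\Fp$-edges to begin with. Your appeal to ``twist rigidity'' is also not available here: the paper only proves the twist-of-an-$\Fp$-isogeny statement for $\ell=2$ (as a corollary of Proposition~\ref{prop:neighbour_set}), and it relies on the specific structure of $2$-torsion. Finally, the contradiction you derive is backwards: if the neighbour $j$-invariant multisets of $v_a$ and $w_a$ coincide, that is precisely the situation in which $a$ is \emph{not} an attaching $j$-invariant in the sense of Definition~\ref{def:attaching_along_j}, so it does not conflict with $[a,b]$ being an attaching edge. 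In fairness, the paper's own proof simply asserts ``necessarily $c\neq d$'' without argument, so you are right to flag it as needing justification; a correct argument would have to rule out the configuration where $v_a,w_a,v_c,w_c$ form a $4$-cycle in $\FpGraphthree$ while $a$ also carries an attaching double edge $[a,b]$, and this requires more than edge-counting at $a$ alone. Your closing remarks about the exceptional $j$-invariants $0$ and $1728$ are well taken: Lemma~\ref{lemma:one-two-isogeny} excludes them in its hypothesis, and the paper's proof of this lemma silently inherits that restriction.
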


\begin{proof}
\begin{enumerate}
    \item This follows from Lemma \ref{lemma:one-two-isogeny}: Suppose the neighbouring vertices $w_b, w_c$ of $w_a$ have have $j$-invariants $b,c$. Suppose that $v_c$, the twist of $w_c$, is not a neighbour of $v_a$. Lemma \ref{lemma:one-two-isogeny} applied to the pair $v_a, w_c$ gives a double edge $[a,c] \in \FpGraphthree$.
    
    Similarly, there is a neighbour $v_d$ of $v_a$ with $j$-invariant $d$ such that $w_d$ is not a neighbour of $w_a$ and we obtain a double edge $[a,d]$ in $\FpGraphthree$. Since there are only $4$ edges from $a$ in $\FpGraphthree$ and since we assumed that at least one of the neighbours of $v_a$ had a different $j$-invariant than the neighbours of $w_a$ (and vice versa), we necessarily have that both $v_a$ and $w_a$ have two neighbours with the same $j$-invariant.
    
    \item Suppose that there is a new (double) edge $[a,b]$, not coming from an edge in $\FpGraphthree$. Let $v_a$ and $w_a$ be the twists corresponding to $\jp=a$. Let $v_c,v_d$ be the neighbours of $v_a$. The edges from $a$ in $\FpBarGraphthree$ are $[a,b], [a,b], [a,c]$ and $ [a,d]$. Since we assumed that the new edge does not come from the $\FpGraphthree$, the neighbours of $w_a$ cannot have $j$-invariant $b$ and are necessarily $w_c, w_d$.  \qedhere
\end{enumerate}
\end{proof}

\begin{cor}[Neighbours of twists] \label{cor:neighbours_twists_3}
For every $a \in \Fp$ that is not a root of  $\Res_3(x)$, the neighbours of $v_a$ and its twist $w_a$ have the same $j$-invariants $b,c$ with $b \neq c$.
\end{cor}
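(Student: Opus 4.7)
The plan is to derive this corollary by combining Lemma \ref{lemma:double_edge_lemma} (which characterises $j$-invariants admitting a double edge as roots of $\Res_3(X)$) with the neighbour lemma \ref{lemma:neighbors_ell_large} and the rigidity result \ref{lemma:fp_edges_are_rigid}. The whole argument is a contrapositive: if either conclusion fails (neighbours differ, or the two neighbours of $v_a$ coincide in $j$-invariant), we will produce a double edge from $a$ in $\FpBarGraphthree$, forcing $a$ to be a root of $\Res_3(X)$.

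First, I invoke Lemma \ref{lemma:double_edge_lemma} to record that, under the hypothesis, there is no double edge out of $a$ in $\FpBarGraphthree$. Since $\ell=3>2$ and all isogenies in $\FpGraphthree$ are horizontal, each of $v_a$ and $w_a$ has exactly two neighbours in its component of $\FpGraphthree$ (I can tacitly assume $p\equiv -1 \bmod 3$ as in Section \ref{sec:stacking_folding_attaching_large}, since otherwise there are no $\Fp$-edges to talk about and the statement is vacuous).

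Next I show that the two neighbours of $v_a$ carry distinct $j$-invariants. Suppose both had $j$-invariant $b$. By Lemma \ref{lemma:fp_edges_are_rigid} the two $\Fp$-edges from $v_a$ are inequivalent over $\Fpbar$, so they give rise to (at least) two distinct edges $[a,b]$ in $\FpBarGraphthree$, that is, a double edge out of $a$. This contradicts Step~1, so the two neighbours of $v_a$ have $j$-invariants $b\neq c$; the same reasoning applies to $w_a$, producing a pair $\{b',c'\}$ with $b'\neq c'$.

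Finally I compare the two pairs. If $\{b,c\}\neq\{b',c'\}$, then some $j$-invariant $d\in\{b',c'\}\setminus\{b,c\}$ occurs as a neighbour of $w_a$ but not of $v_a$. The edge $[a,d]\in\FpBarGraphthree$ coming from $w_a\to w_d$ therefore corresponds to an $\ell$-isogeny from the curve with $j$-invariant $a$ (viewed as $v_a$) to a curve with $j$-invariant $d$ that is not defined over $\Fp$; applying Lemma \ref{lemma:one-two-isogeny} yields a double edge $[a,d]$ in $\FpBarGraphthree$, contradicting Step~1 once more. Hence $\{b,c\}=\{b',c'\}$, which is the claim.

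The main obstacle, such as it is, is Step~2: the case where the two $\Fp$-edges at $v_a$ land on \emph{different} vertices of $\FpGraphthree$ (namely $v_b$ and $w_b$) rather than on the same vertex. Lemma \ref{lemma:fp_edges_are_rigid} handles this cleanly by guaranteeing that distinct $\Fp$-edges at $v_a$ remain distinct in $\FpBarGraphthree$, so both subcases funnel into the same double-edge obstruction.
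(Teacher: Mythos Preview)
Your proof is correct and follows the same contrapositive strategy as the paper: any failure of the conclusion forces a double edge at $a$ in $\FpBarGraphthree$, hence $a$ is a root of $\Res_3(X)$. The paper's version is terser, routing through Lemma~\ref{lemma:attaching_ell_3} (whose proof in turn rests on Lemma~\ref{lemma:one-two-isogeny}) rather than invoking Lemma~\ref{lemma:one-two-isogeny} directly as you do in Step~3; your Step~2 treatment of the $b\neq c$ clause via Lemma~\ref{lemma:fp_edges_are_rigid} is in fact more explicit than the paper's proof, which leaves that part implicit. Note also that the hypothesis of Lemma~\ref{lemma:fp_edges_are_rigid} ($a\notin\{0,1728\}$) is automatically satisfied here, since both $0$ and $1728$ are roots of $\Res_3(X)$.
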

\begin{proof}
If at least one of the neighbours of $w_a$ had a different $j$-invariant than the neighbours of $v_a$, it would be an attaching $j$-invariant (every vertex in $\FpGraphthree$ has only two neighbours). The result follows from Lemma \ref{lemma:attaching_ell_3}.
\end{proof}

\begin{example}Let us work out the above lemmas for $p = 71$. $\FpGraphthree$ is given in Figure \ref{fig:71_3}. The supersingular $j$-invariants are $0, 17, 24, 40(\equiv54000\mod{71}), 41, 48 (\equiv 8000\mod{71}), 66$.
\begin{figure}[h!]
  \centering
  \includegraphics[width=100mm]{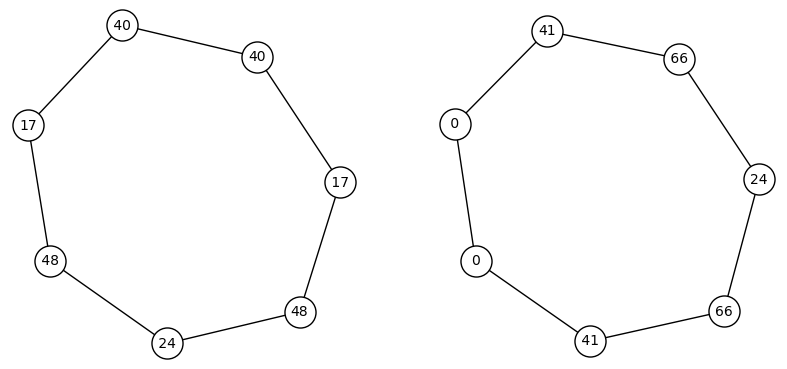}
  \caption{The $\FpGraphthree$ for $p = 71$}
\label{fig:71_3}
\end{figure}
\\The polynomial $\Res_3(x)$ factors in $\Fp$:
\begin{align*}
   \Res_3(x) = &  \left(44\right) \cdot x^{2} \cdot (x - 66)^{2} \cdot (x - 48)^{2} \cdot (x - 42)^{2} \cdot (x - 41)^{2}\\ & \cdot (x - 40)^{2} \cdot (x - 34)^{2} \cdot (x - 25)^{2} \cdot (x - 24)^{2} \cdot (x - 17)^{2}.
\end{align*} All the vertices of $\FpSubGraph$ are roots of $\Res_3(x)$, admitting a double edge. The ones that correspond to a double self-loop are the roots of
\begin{align*}
    \frac{\partial \Phi_3(x,x)}{\partial x} = \left(65\right) \cdot (x - 48) \cdot (x - 34) \cdot (x^{3} + 54 x^{2} + 54 x + 54).
\end{align*} Namely, there is a double self-loop at $48$ (because $34$ is not a supersingular $j$-invariant for $p= 71$). Finally, there are single loops at $0$ and $40$, as these are zeroes of $\Phi_3(x,x)$ with multiplicities 1. 

For the neighbours of $0$ and $24\equiv 1728 \mod 71$, we count the edges from these special vertices as one. Moreover, we see that only the edges $[40,66]$ and $[17, 41]$ are the attaching edges.
\begin{figure}[h!]
  \centering
  \includegraphics[width=85mm]{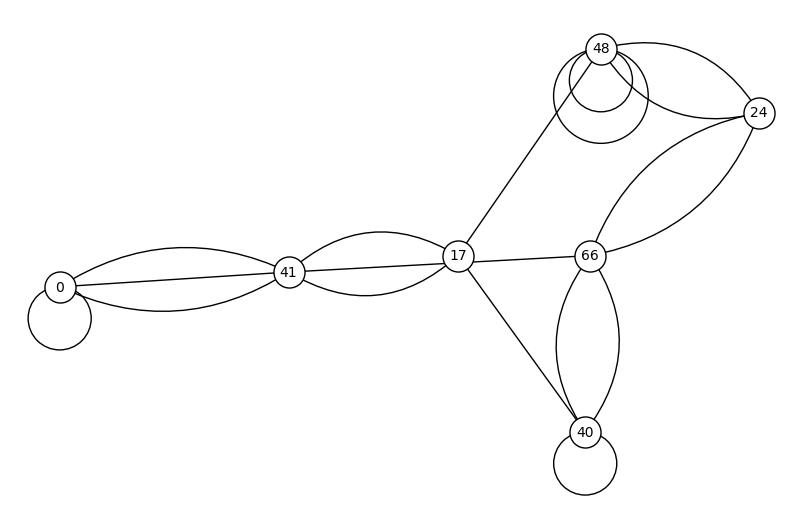}
  \caption{The $\FpSubGraph \subset \FpGraphthree $ for $p = 71$}

\label{fig:71_spine}
\end{figure}

\end{example}

The following theorem is a specialization of Proposition \ref{theorem:SFAA_for_ell_large}. We are mainly interested in the cryptographic applications, so we restrict to the case $p \equiv 3 \mod 4$. Then the class numbers $h(-p)$ and $h(-4p) = 3 \cdot h(-p)$ are both odd. With our assumption $p \equiv 2 \mod 3$, we have $p \equiv 11 \mod 12$.

\begin{theorem}[Stacking, folding and attaching for $\ell = 3$] \label{theorem:sfa_ell_3}
Let $p$ be a prime, $p \equiv 11 \mod 12$. When passing from $\FpGraphthree$ to the spine $\FpSubGraph \subset \FpBarGraphthree$, 
    \begin{enumerate}
        \item all components that do not contain $0$ or $54000$ stack,
        \item there are two distinct connected components $V$ and $W$ that contain a $j$-invariant $1728$, one of them contains both vertices with $j$-invariant $0$ and the other one both vertices with $j$-invariant $54000$. $V$ and $W$ fold and get attached at the $j$-invariant $1728$.
        \item At most $8$ vertices admit new edges, attaching at most $4$ pairs of components by a new edge.
\end{enumerate}
\end{theorem}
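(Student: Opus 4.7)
The plan is to leverage the factorizations
\[
\Phi_3(X,X) = -X(X-54000)(X-8000)^2(X+32768)^2
\]
and of $\Res_3(X)$ recorded at the start of Section \ref{sec:stacking_folding_attaching_3}, together with the fact from Example \ref{example:neighbours_loops_ell3} that the only supersingular $j$-invariants at which there is an $\Fp$-isogeny to the quadratic twist -- equivalently, an $\Fp$-edge between $v_a$ and $w_a$ in $\FpGraphthree$ -- are $j=0$ and $j=54000$, the double self-loops at $8000$ and $-32768$ being defined only over $\Fpbar$.

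For Part (1), let $V$ be a component of $\FpGraphthree$ containing none of $v_0, w_0, v_{54000}, w_{54000}$. By Theorem \ref{theorem:volcanos}(1) each component of $\FpGraphthree$ is a cycle, and away from those exceptional $j$-invariants the cycles coincide with the orbits of $\langle\mathfrak{p}_3\rangle$ acting on the $\Fp$-isomorphism classes of supersingular curves with endomorphism ring $\OO_K = \Z[(1+\sqrt{-p})/2]$ (surface) or $\Z[\sqrt{-p}]$ (floor). Since the action of $\mathfrak{p}_3$ preserves the two orbits of the free class-group action (which are swapped by the Galois/twist involution $t$), the involution $t$ sends every $\langle \mathfrak{p}_3\rangle$-cycle to a distinct cycle. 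In particular $t(V) \ne V$, and Corollary \ref{cor:neighbours_twists_3} ensures that the bijection $V \to t(V)$ is a graph isomorphism after relabelling vertices by $j$-invariants, which is stacking.

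For Part (2), Example \ref{cor:endo_of_1728} places $v_{1728}$ on the surface and $w_{1728}$ on the floor, hence in distinct components. By Example \ref{example:j=0_always_on_the_floor} and Example \ref{example:neighbours_loops_ell3}(1), $v_0$ and $w_0$ lie on the floor and are joined by an $\Fp$-edge, so they lie in a single folding floor-component. For $j=54000$, the right-hand side of $E_{54000}\colon y^2 = x^3 - 15x + 22$ factors as $(x-2)(x^2+2x-11)$, whose remaining roots are $-1 \pm 2\sqrt{3}$; for $p \equiv 11 \mod 12$ quadratic reciprocity gives $\sqrt{3} \in \Fp$, so Lemma \ref{lemma:full_two_torsion} puts $j=54000$ on the surface, and Example \ref{example:neighbours_loops_ell3}(2) places $v_{54000}, w_{54000}$ in a single folding surface-component. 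One then argues that on the floor the $\langle\mathfrak{p}_3\rangle$-orbit of $w_{1728}$ coincides with that of $v_0$, and on the surface the orbit of $v_{1728}$ coincides with that of $v_{54000}$, so these two folding components are exactly the two components containing $j=1728$. Attachment at $j=1728$ then follows from Definition \ref{def:attaching_along_j}: the neighbours of $v_{1728}$ and $w_{1728}$ live on different levels and thus carry disjoint sets of $j$-invariants, so identifying them in $\FpSubGraph$ forces new edges at $1728$.

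For Part (3), by Corollary \ref{cor:attachment_means_double_edge} any attachment by a new edge $[a,b]$ forces both endpoints to admit a double edge in $\FpBarGraphthree$, hence by Lemma \ref{lemma:double_edge_lemma} to be roots of $\Res_3(X)$. The factorization of $\Res_3(X)$ has ten distinct roots: the four integers $0,\, 1728,\, 8000,\, -32768$ together with the six roots of the Hilbert class polynomials of discriminants $-20, -32, -35$. Since $0$ and $1728$ are consumed in Part (2) (where they participate in attachment along a $j$-invariant rather than by a new edge), at most $8$ candidate vertices remain, and since each attachment by a new edge uses two endpoints, at most $4$ pairs of components are attached by a new edge. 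The main obstacle is the identification in the middle of Part (2): showing that on each level the $\langle \mathfrak{p}_3\rangle$-orbit containing the corresponding copy of $1728$ is the same as the orbit containing the relevant exceptional twist pair ($v_0$ on the floor, $v_{54000}$ on the surface), uniformly across every prime $p \equiv 11 \mod 12$. This reduces to an arithmetic statement about how the split prime above $3$ moves the CM $j$-invariants $0, 1728, 54000$ within $\Cl(\Z[\sqrt{-p}])$ and $\Cl(\OO_K)$; the remaining steps are bookkeeping with $\Phi_3$ and $\Res_3$.
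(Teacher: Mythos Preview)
Your proposal has a genuine gap that you yourself flag: in Part~(2) you never prove that the $\langle\mathfrak{p}_3\rangle$-orbit containing $w_{1728}$ on the floor is the one containing $v_0,w_0$, nor that the surface orbit of $v_{1728}$ is the one containing $v_{54000},w_{54000}$. Reducing this to ``an arithmetic statement about how the split prime above $3$ moves the CM $j$-invariants'' is not a proof, and for a theorem meant to hold for every $p\equiv 11\pmod{12}$ this is exactly the nontrivial step.

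The paper closes this gap by a different, more elementary route that avoids any orbit identification. It argues by walking along the cycle $V$: if $V$ does not stack, then (using that $h(-p)$ is odd) the walk must terminate either at a pair of twists $v_b,w_b$ joined by an $\Fp$-edge, or at an attaching $j$-invariant; and a second walk shows that any component containing an attaching $j$-invariant also folds and terminates at such a twist-pair. Example~\ref{example:neighbours_loops_ell3} then pins down that the only $j$-invariants with an $\Fp$-edge between twists are $0$ and $54000$. Finally, since $v_{1728}$ and $w_{1728}$ lie on different levels and all $3$-isogenies are horizontal, their neighbours necessarily carry different $j$-invariants (no $j\neq 1728$ has vertices on both levels, and $1728$ is not a root of $\Phi_3(X,X)$), so $1728$ is an attaching $j$-invariant. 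Hence its two components fold, hence contain a loop-pair, hence contain $0$ or $54000$. This chain is what your class-group orbit claim was standing in for.

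Your Part~(1) argument is a legitimate alternative to the paper's walking argument, but it is written confusingly. The sentence about ``the two orbits of the free class-group action (which are swapped by the Galois/twist involution $t$)'' does not parse: $\Cl(\OO)$ acts transitively on each level, so there is one orbit per level, and the twist involution is inversion in $\Cl(\OO)$, not a swap of two orbits. What you want to say is: since $\Cl(\OO)$ has odd order, the quotient $\Cl(\OO)/\langle\mathfrak{p}_3\rangle$ has odd order, so inversion fixes exactly one coset; hence exactly one $\langle\mathfrak{p}_3\rangle$-cycle on each level satisfies $t(V)=V$ (folds), and all others stack. This is clean and arguably nicer than the paper's walk, but you then need the missing identification to finish Part~(2). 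Incidentally, your computation placing $54000$ on the surface for $p\equiv 11\pmod{12}$ is correct.
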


\begin{proof}
In Lemmas \ref{lemma:attaching_ell_3} and \ref{lemma:neighbors_ell_large}, we showed that $j$-invariants that attach by a new edge and $j$-invariants that do not admit a double edge look the same in the graph $\FpGraphthree$: that is, if the vertex $v_a$ has neighbours $v_b, v_c$, then the vertex $w_a$ has neighbours $w_b, w_c$. We do not need to treat these vertices with a separate case.

Suppose that there is a component $V$ that does not stack. This either means that there is a vertex $v_a$ whose neighbours are different than those of $w_a$, in this case $a$ is an attaching $j$-invariant and we will treat this case below.

Or, there is a $j$-invariant $a$ such that both the vertices $v_a, w_a$ are in the component $V$. We know that $V$ is a cycle. The vertices $v_a, w_a$ divide the cycle in two halves, choose either half $H$. Look at the neighbours of $v_a$ and $w_a$. If they have the same $j$-invariant $b$, replace $a$ with $b$ and continue moving along the halves of the cycle, until either of the following happens:

\begin{enumerate}[(i)]
    \item $v_a$ and $w_a$ are neighbours in $\FpGraphthree$ and hence induce a loop in $\FpGraph$. 

    \item The only neighbour of $v_a$ and $w_a$ is a vertex $v_\jp$ with $j$-invariant $\jp$. This is necessarily an attaching $j$-invariant because the neighbours of $w_\jp$ cannot have $j$-invariants $a$.

    \item The neighbour $v_b$ of $v_a$ has $j$-invariant $b $ and the neighbour $w_c$ of $w_a$ has $j$-invariant $w_c$, for $b \neq c$. \label{case:distinct_neighbours_cycle} 
    Then either the other neighbour of $w_a$ is $w_b$ or $a$ is an attaching $j$-invariant. Suppose $a$ is not an attaching $j$-invariant. Continuing along the whole cycle $V$ in the direction of the edge $[v_a, v_b]$, and symmetrically in the direction of $[w_a, w_b]$, we will reach 
     a point when the neighbour of some $v_{c}$ is not a neighbour of the $w_c$. This happens when class number is odd because we cannot get the same sequence in the half that has odd length and in the half that has even length. Here we also obtain an attaching $j$-invariant.
\end{enumerate}

We now discuss what happens with the components that contain an attaching $j$-invariant. 
The proof is a similar argument to the one above. Starting at any attaching $j$-invariant $\jp\in V$ (there could be multiple), we know that its neighbours $v_a, w_a$ have the same $j$-invariant by \ref{lemma:attaching_ell_3}. By walking away from $\jp$, we will at some point reach either \begin{enumerate}[(i)]
    \item a pair of vertices $v_b, w_b$ that are connected and induce a loop in $\FpBarGraphthree$. The component then folds.
    
    \item A pair of vertices $v_b, w_b$ such that the neighbour in the direction away from $\jp$ of $v_b$ is $v_c$ and of $w_b$ is $w_d$ for some $c \neq d$. But then $b$ is an attaching $j$-invariant and hence the neighbours of $v_b$ and $w_b$ have different $j$-invariants by Lemma \ref{lemma:attaching_ell_3}. But we assumed that they come from the chain from $\jp$ and so the neighbours of $v_b$ and $w_b$ in the direction of $\jp$ have the same $j$-invariant.
    This is a contradiction.

    \item A single vertex $v_b$ from `both sides'. But since the class number is odd, this gives us a contradiction. 
    \end{enumerate}
The above then shows that any component $V$ of $\FpGraphthree$ that contains an attaching $j$-invariant $\jp$ contains precisely one attaching $j$-invariant, folds and the `opposite vertices' (the vertices $v_b,w_b$ that are the furthest away from $\jp$) are connected by an $\Fp$-isogeny, hence inducing a loop in $\FpGraphthree$. 

Example \ref{example:neighbours_loops_ell3} showed that the only possible opposite vertices are $j$-invariants $0$ and $54000$. For $p \equiv 3 \mod 4$, there are two components containing $1728$: By Section \ref{subsubsec:Neighboursof1728}, one of the vertices corresponding to $1728$ is on the floor and the other one is on the surface, so they are on different components of $\FpGraphthree$. One of these vertices is on the same component of $\FpGraphthree$ as the vertices with $j$-invariant $0$ and the other one will contain both vertices with $j$-invariant $54000$.
\end{proof}

\begin{remark}
\begin{enumerate}
    \item The proof above shows that \begin{align*}
        \End_{\Fp}(E_{54000}) = \Z[\sqrt{-p}]
    \end{align*} whenever this $j$-invariant is supersingular.
    \item The proof above holds for any $p$ such that the order of the prime above $3$ in $\Cl(\OO_K)$ is odd, which is necessarily the case for $p \equiv 3 \mod 4$.
    
    \item It is possible to extend the proof for primes $p$ such that the order of the prime above $3$ in $\Cl(\OO_K)$ is even, however, one needs to consider the case of cyclic graphs like Figure \ref{fig:cycle} and correspond to case 3 in the proof of Theorem \ref{theorem:sfa_ell_3}.

    It should be possible to argue that the two distinct paths from $v_a$ to $w_a$ cannot collapse onto one loop if one adapts the proof of Lemma \ref{lemma:fp_edges_are_rigid} because a composition of cyclic isogenies with no backtracking will again be a cyclic isogeny.

    \begin{figure}[h!]
  \centering
  \includegraphics[width=45mm]{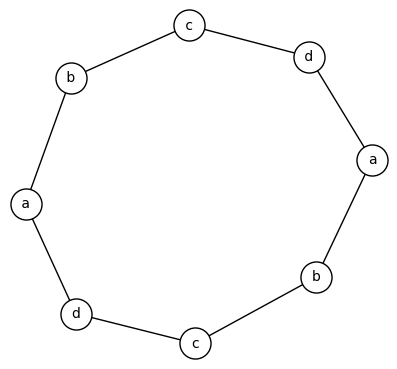}
\caption{In case \ref{case:distinct_neighbours_cycle} for primes $p$ such that the prime above $3$ has even order, one needs to disprove the situation depicted above.}
\label{fig:cycle}
\end{figure}
\end{enumerate}
\end{remark}

\subsection{Stacking, folding and attaching for $\ell = 2$} \label{subsec:stacking_folding_attaching_2}

We identify how the components of $\FpGraphtwo$ come together in $\FpSubGraph\subset\FpBarGraphtwo$. Vertical $2$-isogenies are possible, in contrast to the $\ell = 3$ case from Section \ref{sec:stacking_folding_attaching_3}. 
\\The main theorem of this section is the following:

\begin{theorem}[Stacking, folding and attaching] \label{theorem:stacking-folding-attaching} \label{sfatwo}
Only stacking, folding or at most $1$ attachment by a new edge are possible. In particular, no attachments by a $j$-invariant are possible.
\end{theorem}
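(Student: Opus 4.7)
The strategy mirrors the proof of Proposition \ref{theorem:SFAA_for_ell_large} for $\ell>2$, but exploits two simplifications particular to $\ell=2$: (i) quadratic twists share the same endomorphism ring over $\Fp$ (Corollary \ref{lemma:quadratic_twists}), so twin vertices sit at the same level of their volcanoes; and (ii) the polynomial $\Res_2(X)$ factors very explicitly (Corollary \ref{cor:double_edges_for_ell_2}), giving a short and completely enumerated list of $j$-invariants that can participate in any attachment.

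First I would invoke the $\ell = 2$ neighbor lemma (Lemma \ref{prop:neighbour_set}): for $\jp$ not a root of $\Res_2(X)$, the $\FpGraphtwo$-neighbor sets of $v_\jp$ and $w_\jp$ agree as multisets of $j$-invariants. Combined with the rigidity of $\Fp$-edges (Lemma \ref{lemma:fp_edges_are_rigid}), this shows that all three edges emanating from such a $\jp$ in $\FpBarGraphtwo$ are already accounted for by stacking two twin components together or by folding a single component containing both twins onto itself. Therefore, both types of attachment are confined to the special vertices $\jp \in \{0, 1728, -3375\}$ and the roots of $X^2 + 191025 X - 121287375$.

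Next I would rule out attachment along a $j$-invariant. By Corollary \ref{lemma:quadratic_twists}, the twins $v_a$ and $w_a$ always lie at the same level of their volcanoes, so the local $\FpGraphtwo$-degree picture is symmetric between them. For $\jp = 0$ and $\jp = -3375$ both twins are on the floor with a unique neighbor ($j = 54000$ or $j = 16581375$ respectively), so their neighborhoods agree. The delicate case is $\jp = 1728$ with $p \equiv 3 \pmod 4$: Example \ref{cor:endo_of_1728} places one twin on the surface and the other on the floor, but they are joined by the vertical $2$-isogeny exhibited there, hence lie in the same component $V = W$, which violates the "two distinct components" hypothesis in the extension of Definition \ref{def:attaching_along_j} to $\ell = 2$. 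The analogous check at the two roots of $X^2 + 191025X - 121287375$ finishes this step.

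Finally, for attachments by a new edge, I would argue that any such attaching edge, by Corollary \ref{cor:attachment_means_double_edge}, gives a double edge in $\FpBarGraphtwo$, whose endpoints must both be on the short list above. I would then match $\Phi_2(\jp, X)$ edge-by-edge against the $\FpGraphtwo$-picture after twin identification: the triple edge at $\jp = 0$ and the self-loop-plus-double-edge at $\jp = 1728$ are realized inside the volcano (via Example \ref{cor:endo_of_1728} and Section \ref{subsubsec:Neighboursof1728}), while the double self-loop at $\jp = -3375$ and the single edge to $16581375$ are accounted for by the floor structure of that vertex. The only remaining possibility for a genuinely new attaching double edge is the one between the two roots of $X^2 + 191025X - 121287375$, which can occur at most once because the polynomial has only two roots. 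The main obstacle is precisely this final case-matching: one must split by $p \bmod 8$, keep track of which of $287496$ and $16581375$ is on the surface versus the floor, and handle the non-rigid vertices $\jp = 0, 1728$ (where Lemma \ref{lemma:fp_edges_are_rigid} fails due to extra automorphisms) separately.
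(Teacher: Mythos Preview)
Your overall strategy matches the paper's: use the neighbour lemma to force stacking/folding away from a short list of exceptional $j$-invariants, then handle those by hand via the factorization of $\Res_2(X)$ and Proposition~\ref{prop:possible_attachments}. However, you understate Proposition~\ref{prop:neighbour_set}. In the paper that proposition is proved for \emph{every} $\jp\neq 1728$, not merely for $\jp$ outside the zero set of $\Res_2(X)$; the argument there is a pure edge-count (if the neighbour sets differed, Lemma~\ref{lemma:one-two-isogeny} would produce more than three edges at $\jp$ in $\FpBarGraphtwo$), and it does not care whether $\jp$ admits a double edge. With the full-strength version in hand, Corollary~\ref{cor:attach_j_invariant_2} kills attachment along a $j$-invariant in one line: neighbours of twins always agree for $\jp\neq 1728$, and at $1728$ the twins are already $2$-isogenous over $\Fp$ and hence lie on the same component.

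Because you only claim the weaker form, you are forced into case checks at $0$, $-3375$, and the roots of $X^2+191025X-121287375$. The first two are fine (floor vertices, unique neighbour), but your ``analogous check'' at the roots of $X^2+191025X-121287375$ is not actually analogous: those roots need not lie on the floor (for $p\equiv 7\pmod 8$ they can sit on the surface with three distinct neighbours), so the unique-neighbour argument does not transfer. You would end up reproving the edge-count argument of Proposition~\ref{prop:neighbour_set} at those vertices anyway. It is cleaner to state and use the neighbour lemma in its full generality from the outset; then the only genuinely delicate vertex is $1728$, and the only possible new attaching edge is the one between the two roots of $X^2+191025X-121287375$, exactly as in Proposition~\ref{prop:possible_attachments}.
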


Recall that we form the graph $\FpSubGraph \subset \FpBarGraphtwo$ from $\FpGraphtwo$ in two steps: identify vertices corresponding to the same $j$-invariant and identify the edges, then add new edges.

We will show that:
\begin{enumerate}
    \item the neighbours of the two vertices that correspond to twists have the same $j$-invariants (Proposition \ref{prop:neighbour_set}) and this will impliy that only stacking and folding is possible.

    \item At most one component folds, and for $p \equiv 3 \mod 4$ this is the component containing $\jp = 1728$ (Proposition \ref{prop:1728_folding}). 
    
    \item Attaching of components by a new edge happens between at most one pair of vertices, and those vertices are roots of the Hilbert class polynomial of $\Q(\sqrt{-15})$ (Proposition \ref{prop:possible_attachments}).
\end{enumerate}

We begin with some results on the neighbours of vertices corresponding to the same $j$-invariants. From Corollary \ref{lemma:quadratic_twists}, we know that (except for $1728$), twists have isomorphic endomorphism rings and hence lie on the same level in the volcano. More is true:

\begin{prop} \label{prop:neighbour_set}
Let $\jp$ be a supersingular $j$-invariant and let $v_\jp$ and $w_\jp$ be two distinct vertices in $\FpGraphtwo$ corresponding to elliptic curves with $j$-invariant $\jp$. If $j \neq 1728$, then the  two vertices corresponding to the same $j$-invariants have the same neighbours, that is:
\begin{enumerate}
    \item if $p \equiv 1 \mod 4$ and the neighbour of $v_\jp$ is $v_{\jp'}$ then the neighbour of $w_\jp$ is $w_{\jp'}$,
    \item if $p \equiv 3 \mod 4$ and if 
    \begin{enumerate}
        \item the vertices $v_\jp$ and $w_\jp$ are both on the floor, and $v_\jp$ and $w_\jp$ are each attached to a vertex with $j$-invariant $\jp'$,
    
        \item the vertices $v_\jp$ and $w_\jp$ are both on the surface. $v_\jp$ has three neighbours with distinct $j$-invariants $a,b,c$ and $w_\jp$ has three neighbours with the same distinct $j$-invariants $a,b,c$.
\end{enumerate}
\end{enumerate}
\end{prop}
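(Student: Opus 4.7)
The plan is to exploit the quadratic twist isomorphism between the two vertices $v_\jp$ and $w_\jp$. Since $\jp \neq 1728$, the set of $\Fp$-isomorphism classes with this $j$-invariant is a torsor for $\Fp^*/(\Fp^*)^2$ (even for $\jp = 0$ this coincides with $\Fp^*/(\Fp^*)^6$ because supersingularity forces $p \equiv 2 \mod 3$), so $v_\jp$ and $w_\jp$ correspond to a curve $E$ and its quadratic twist $E^t$. By Corollary \ref{lemma:quadratic_twists} together with Corollary \ref{cor:surface_floor_is_true}, these sit on the same level of their respective volcano components, which justifies the two-level split for $p \equiv 3 \mod 4$.

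The central observation I would establish is a \emph{twist transport} lemma for $2$-isogenies: every $\Fp$-rational $2$-isogeny $\phi: E \to E'$ gives rise to an $\Fp$-rational $2$-isogeny $\phi^t: E^t \to (E')^t$ with $j((E')^t) = j(E')$. For a model $E: y^2 = x^3 + ax + b$ and non-square $d \in \Fp^*$, the twist $E^t: y^2 = x^3 + d^2 a x + d^3 b$ admits an $\Fptwo$-isomorphism $\iota: E^t \to E$, $(x,y)\mapsto (x/d, y/(d\sqrt{d}))$, which identifies $E^t[2](\Fp) \leftrightarrow E[2](\Fp)$ by matching $x$-coordinates up to a factor of $d$. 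Applying V\'elu's formulas to the matched $\Fp$-rational kernel $\langle (d\alpha, 0)\rangle$ on $E^t$ produces an $\Fp$-rational $2$-isogeny whose codomain becomes $\Fptwo$-isomorphic to $E'$ after post-composing with $\iota$, hence is $\Fp$-isomorphic to $(E')^t$ and shares its $j$-invariant with $E'$.

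With the twist transport in hand, the rest is a case analysis driven by Theorem \ref{theorem:volcanos}. In case (1), $p \equiv 1 \mod 4$ and Lemma \ref{lem:spine-1-mod-12} gives each vertex exactly one $\Fp$-neighbour; transport matches them. In case (2a), both $v_\jp, w_\jp$ sit on the floor, where each vertex has a unique $\Fp$-neighbour on the surface (in both the $p \equiv 3 \mod 8$ and $p \equiv 7 \mod 8$ subcases), and transport matches them. In case (2b), both lie on the surface, each has three $\Fp$-neighbours (three vertical if $p \equiv 3 \mod 8$, one vertical and two horizontal if $p \equiv 7 \mod 8$), and the transport provides an $\Fp$-rational bijection of neighbour multisets identifying the $j$-invariants.

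The main obstacle is verifying rigorously that $\phi^t$ descends to $\Fp$: rather than descending $\iota \circ \phi \circ \iota^{-1}$ through a Galois cohomology argument (awkward because $\iota$ involves $\sqrt{d}$), I would carry out V\'elu's construction directly on the twisted $\Fp$-rational kernel and verify by inspection of the resulting Weierstrass coefficients that the codomain is the expected quadratic twist of $E'$. A secondary subtlety in case (2b) is the asserted distinctness of $a,b,c$: by Lemma \ref{lemma:fp_edges_are_rigid} a repeated neighbour $j$-invariant would force a double edge in $\FpBarGraphtwo$ at $\jp$, and Corollary \ref{cor:double_edges_for_ell_2} confines such $\jp$ to $\{0, 1728, -3375\}$ or roots of $X^2 + 191025 X - 121287375$. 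The first and third are always on the floor (by Example \ref{example:j=0_always_on_the_floor} and the analogous example for $-3375$), the second is excluded by hypothesis, so only the $\Q(\sqrt{-15})$ vertices on the surface remain as a genuine exception to distinctness and must be treated separately or absorbed into the statement.
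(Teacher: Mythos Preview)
Your approach is genuinely different from the paper's. You argue \emph{constructively} via a twist-transport lemma: an $\Fp$-rational $2$-isogeny $E\to E'$ yields an $\Fp$-rational $2$-isogeny $E^t\to (E')^t$, so the neighbour multisets of $v_\jp$ and $w_\jp$ agree. The paper instead argues \emph{by contradiction} using degree bounds: if $v_\jp$ and $w_\jp$ had neighbours with different $j$-invariants, then Lemma~\ref{lemma:one-two-isogeny} would force two double edges at $\jp$ in $\FpBarGraphtwo$, exceeding its $3$-regularity; and for the surface case it invokes Lemma~\ref{lemma:fp_edges_are_rigid} to say the three $\Fp$-neighbours of $v_\jp$ already account for all three $\Fpbar$-neighbours of $\jp$, so $w_\jp$'s neighbours must match. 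Amusingly, the paper then derives your twist-transport statement as a \emph{corollary} of this proposition (the ``Isogenies for twists'' corollary immediately following), so you have inverted the logical order. Your route is more explicit and gives the stronger conclusion (the neighbour of $w_\jp$ is literally the twist $w_{\jp'}$, not merely something with the right $j$-invariant), at the cost of the V\'elu homogeneity check you outline.

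Where your proposal is incomplete is exactly where you flag it: the distinctness of $a,b,c$ in case~(2b). Your double-edge classification via Corollary~\ref{cor:double_edges_for_ell_2} leaves the $\Q(\sqrt{-15})$ vertices unresolved, and you do not close this. The paper's argument here is cleaner and does not need the classification at all: if two surface neighbours of $v_\jp$ shared a $j$-invariant $b$, then (since there are only two vertices with $j$-invariant $b$) one obtains a $4$-cycle on the surface, contradicting that the surface cycle length divides the odd class number $h(-p)$. This same parity argument handles your residual case without any reference to $\Res_2(X)$, so you should replace your distinctness paragraph with it.
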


The neighbours of $j = 1728$ are given in Section \ref{subsubsec:Neighboursof1728}.

\begin{proof}[Proof of Proposition \ref{prop:neighbour_set}.]
\begin{enumerate}
    \item For $p \equiv 1 \mod 4$, any connected component of $\FpGraphtwo$ is an edge. If $v_\jp$ and $w_\jp$ are on the same connected component, the result follows immediately.

If $v_\jp$ and $w_\jp$ are on different connected components, denote the neighbour of $v_\jp$ as $v_a$ and the neighbour of $w_\jp$ is $w_b$. If $a = b$, the result holds. If $a \neq b$, Lemma \ref{lemma:one-two-isogeny} applied to the pair $v_\jp, w_b$ gives a double edge $[\jp, a]$. Similarly, there is a double edge $[\jp, b]$ in $\FpBarGraphtwo$. There are only $3$ edges from $\jp$ in $\FpBarGraphtwo$, so we obtain a contradiction. 

\item Suppose now that $p \equiv 3 \mod 4$. By Corollary \ref{cor:surface_floor_is_true}, either both $v_\jp,w_\jp$ lie on the surface or they both lie on the floor of their respective components. Considering these two cases:
\begin{enumerate}
    \item Case 1: \label{case:surface_surface} The vertex $v_\jp$ is on the surface of $\FpGraphtwo$ component $V$ and $w_\jp$ is on the surface of component $W$. Since $v_\jp$ is on the surface of $V$, it has three (not necessarily distinct) neighbours $v_\mathbf{a}, v_\mathbf{b}$ and $v_\mathbf{c}$. 
  
    By Lemma \ref{lemma:fp_edges_are_rigid}, the three neighbors of $v_\jp$ in $V$ give the three neighbors of $v_\jp$ in $\FpBarGraphtwo$: Any neighbor of $w_\jp$ in $W$ has to be one of $w_\mathbf{a},w_\mathbf{b}$ or $w_\mathbf{c}.$ Any set of neighbors of $v_\jp$ in $V$ (counted with multiplicity) is a subset of the neighbors of $w_\jp$. Since $v_a$ and $w_a$ are both floor vertices and $a \neq b,c$, the vertices corresponding to $b$ and $c$ are on the surface. Suppose $b= c$. Since there are only two vertices with $j$-invariant $b$, $w_\jp $ is attached to the same two $j$-invariants $v_b, w_b$ as $v_\jp$ is. Then we see a cycle on the surface of length $4$, and this is a contradiction since for $p \equiv 3 \mod 4$, the class number $h(-p)$ is odd. Hence $v_\jp$ and $w_\jp$ have the same set of neighbors and those neighbours are all distinct.
    
    \item Case 2: $v_\jp$ and $w_\jp$ are on the floors of their respective $\FpGraphtwo$ components, $V$ and $W$. 
    
    Let $v_a$ denote the neighbour of $v_\jp$, where $v_a$ lies on the surface. Let $w_b$ denote the surface neighbor of $w_\jp$. Suppose $a \neq b$: We will show this leads to a contradiction. Lemma \ref{lemma:one-two-isogeny} applied to the pair $v_\jp, w_b$ gives a double edge $[\jp, b]$ in $\FpBarGraphtwo$. Similarly, we obtain a double edge $[\jp, a]$ in $\FpBarGraphtwo$ as well. This would mean that there are four inequivalent edges from $\jp$ in the graph $\FpBarGraphtwo$, which is not possible so $a = b$.

    \qedhere

\end{enumerate}

\end{enumerate}
\end{proof}

\begin{cor}[Isogenies for twists]
Let $\phi  : E \rightarrow E'$ be an $\Fp$-isogeny of degree $2$, $j(E),j(E')\neq 1728$. Then, there is an $\Fp$-isogeny of degree 2 between the quadratic twists $E^{t} \rightarrow (E')^{t}$.
\end{cor}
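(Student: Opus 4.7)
The plan is to construct the twist isogeny $\phi^t: E^t \to (E')^t$ directly and verify it is $\F_p$-rational by a Galois-descent calculation. Fix a non-square $d \in \F_p^*$; then (since $j(E), j(E') \neq 1728$ implies $\operatorname{Aut}(E) = \operatorname{Aut}(E') = \{\pm 1\}$) the quadratic twists are uniquely defined up to $\F_p$-isomorphism, and there are canonical isomorphisms $\tau_E: E \to E^t$ and $\tau_{E'}: E' \to (E')^t$ defined over $\F_p(\sqrt{d}) = \F_{p^2}$. Concretely, for short Weierstrass models one has $\tau_E(x,y) = (d\,x, d\sqrt{d}\, y)$, and because the $p$-power Frobenius sends $\sqrt{d}$ to $-\sqrt{d}$, the twisting isomorphisms satisfy the sign-flip identity $\tau_E^{(p)} = [-1] \circ \tau_E$, and similarly for $\tau_{E'}$.

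Set $\phi^t := \tau_{E'} \circ \phi \circ \tau_E^{-1}: E^t \to (E')^t$, an isogeny of degree $2$ defined a priori only over $\F_{p^2}$. To prove that $\phi^t$ is $\F_p$-rational, compute its Frobenius conjugate: using $\phi^{(p)} = \phi$ (since $\phi$ is $\F_p$-rational by hypothesis) together with the sign-flip identity,
\[
(\phi^t)^{(p)} \;=\; \tau_{E'}^{(p)} \circ \phi \circ (\tau_E^{-1})^{(p)} \;=\; ([-1]\circ \tau_{E'}) \circ \phi \circ (\tau_E^{-1} \circ [-1]) \;=\; [-1] \circ \phi^t \circ [-1] \;=\; \phi^t,
\]
where the last equality uses that $[-1]$ commutes with every isogeny (both are group homomorphisms) and is an involution.

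The main (and essentially only) subtlety is justifying the sign-flip identity $\tau^{(p)} = [-1]\circ \tau$, which falls out of the explicit formula for $\tau$ combined with the Frobenius action on $\sqrt{d}$; everything else is a two-line composition. An alternative route would use Proposition~\ref{prop:neighbour_set}: the vertex $w_{j(E)} \in \FpGraphtwo$ has a neighbour with $j$-invariant $j(E')$, and one shows by a short case analysis (splitting on $p \bmod 8$ and on whether the vertices lie on the surface or floor) that this neighbour must be $w_{j(E')}$ rather than $v_{j(E')}$, since otherwise $v_{j(E')}$ would acquire incompatibly many neighbours of $j$-invariant $j(E)$. The direct construction above is cleaner, produces an explicit isogeny, and avoids case work on the residue class of $p$.
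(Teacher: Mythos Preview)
Your proof is correct and takes a genuinely different route from the paper. The paper's argument is graph-theoretic: it invokes Proposition~\ref{prop:neighbour_set} to say that the twist vertex $w_a$ (for $a = j(E)$) has a neighbour with $j$-invariant $b = j(E')$, and then does a short case analysis---if that neighbour is already $(E')^t$ we are done, and if it is $E'$ itself then a second application of the proposition to $b$ forces $(E')^t$ to be adjacent to $E^t$. Your argument instead conjugates $\phi$ by the twisting isomorphisms and checks Frobenius-invariance directly via the sign-flip identity.

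Your approach has real advantages: it is constructive, it works verbatim for any isogeny degree (not just $2$), and it does not depend on the volcano structure or on the $3$-regularity of $\FpBarGraphtwo$ that underlies Proposition~\ref{prop:neighbour_set}. The paper's approach, by contrast, stays inside the combinatorial framework of the section and reuses its central proposition, so it fits the surrounding narrative more tightly even though it is logically heavier.

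One small imprecision worth fixing: your parenthetical that $j \neq 1728$ forces $\operatorname{Aut}(E) = \{\pm 1\}$ is false at $j = 0$, where the automorphism group over $\Fpbar$ has order $6$. This does not damage the argument, since the explicit formula $\tau_E(x,y) = (dx, d\sqrt{d}\,y)$ and the identity $\tau_E^{(p)} = [-1]\circ\tau_E$ hold for any short Weierstrass model regardless of $\operatorname{Aut}(E)$; moreover, as the paper notes in Section~\ref{subsection:volcano}, when $j=0$ is supersingular one has $p\equiv 2 \bmod 3$, so $(\Fp^*)^6=(\Fp^*)^2$ and the quadratic twist is still the unique nontrivial $\Fp$-twist. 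You can simply drop the parenthetical or replace it by a reference to that discussion.
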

\begin{proof}
Suppose $\phi: E \rightarrow E'$ as in the statement, with $j(E) = \mathbf{a}, j (E') = \mathbf{b}$ and $E$ corresponds to the vertex $v_\mathbf{a}$. $\phi$ corresponds to an edge $[v_\mathbf{a}, v_\mathbf{b}] \in \FpGraphtwo$. Let $w_\mathbf{a}$ be the vertex in $\FpGraphtwo$ corresponding to the quadratic twist $E^t$. Proposition \ref{prop:neighbour_set} gives a neighbour $w_\mathbf{b}$ of $w_\mathbf{a}$ such that $[w_\mathbf{a}, w_\mathbf{b}] \in \FpGraphtwo$.

If $w_\mathbf{b}$ corresponds to the twist $(E')^t$, then the edge $[w_\mathbf{a}, w_\mathbf{b}]$ gives the desired $\Fp$-isogeny.

If, instead, $w_\mathbf{b} = v_\mathbf{b}$, there are two edges $[v_\mathbf{a}, v_\mathbf{b}], [w_\mathbf{a}, v_\mathbf{b}] \in \FpGraphtwo$. Suppose $z_\mathbf{b}$ is the vertex of $\FpGraphtwo$ corresponding to $(E')^t$. Since we assumed $\mathbf{b} \neq 1728$, Proposition \ref{prop:neighbour_set} gives that $z_\mathbf{b}$ also has two neighbours with $j$-invariants $\mathbf{a}$. This means there must be edges $[z_\mathbf{b}, v_\mathbf{a}]$ and $[z_\mathbf{b}, w_\mathbf{a}]$ in $\FpGraphtwo$, and $[z_\mathbf{b}, w_\mathbf{a}]$ gives the desired $\Fp$-isogeny $E^t \rightarrow (E')^{t}$.
\end{proof}

\begin{cor}[Attachment along a $j$-invariant for $\ell = 2$]  \label{cor:attach_j_invariant_2}
Attachment along a $j$-invariant cannot happen for $\ell = 2$.
\end{cor}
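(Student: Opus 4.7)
The plan is to use Proposition~\ref{prop:neighbour_set} to dispose of every supersingular $\jp \ne 1728$, and then to handle $\jp = 1728$ separately using the concrete isogeny constructed in Example~\ref{cor:endo_of_1728}.

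First I would unpack Definition~\ref{def:attaching_along_j}: attachment along a $j$-invariant $a$ demands both that $v_a$ and $w_a$ lie in distinct components $V \ne W$ of $\FpGraphtwo$ and that at least one neighbour $v_b$ of $v_a$ has the property that its twist $w_b$ is not a neighbour of $w_a$ (symmetrically in $v \leftrightarrow w$). The strategy is to show that for $\ell = 2$ the second condition is impossible for every $\jp \ne 1728$, while for $\jp = 1728$ the first condition fails instead.

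For every supersingular $\jp \ne 1728$, Proposition~\ref{prop:neighbour_set} asserts in every congruence case that the multiset of $j$-invariants of the neighbours of $v_\jp$ in $\FpGraphtwo$ coincides with that of $w_\jp$: one horizontal edge from each vertex when $p \equiv 1 \mod 4$, and either the floor-vertex case (one surface neighbour on each side) or the surface-vertex case (three neighbours with the same three distinct $j$-invariants) when $p \equiv 3 \mod 4$. Consequently every neighbour $v_b$ of $v_\jp$ admits its twist $w_b$ as a neighbour of $w_\jp$, so the second clause of Definition~\ref{def:attaching_along_j} cannot be satisfied. This step is essentially a repackaging of Proposition~\ref{prop:neighbour_set} and requires no new input.

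The only remaining case is $\jp = 1728$, which is supersingular precisely when $p \equiv 3 \mod 4$. Example~\ref{cor:endo_of_1728} constructs an explicit $\Fp$-rational vertical $2$-isogeny between the two non-$\Fp$-isomorphic supersingular curves with $j$-invariant $1728$, one on the surface and one on the floor of $\FpGraphtwo$. Since this isogeny is an edge of $\FpGraphtwo$ joining $v_{1728}$ to $w_{1728}$, the two vertices lie in the same connected component; the first clause of Definition~\ref{def:attaching_along_j} therefore fails, and attachment along $\jp = 1728$ is impossible as well. The only genuine subtlety in the whole argument is remembering that $\jp = 1728$ is excluded from Proposition~\ref{prop:neighbour_set} and so demands this separate (but short) treatment; with Example~\ref{cor:endo_of_1728} in hand the obstruction disappears.
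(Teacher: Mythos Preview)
Your proposal is correct and follows essentially the same approach as the paper: invoke Proposition~\ref{prop:neighbour_set} for all $\jp \neq 1728$ to rule out the neighbour-mismatch condition, and handle $\jp = 1728$ separately by noting that the two twists are joined by an $\Fp$-rational $2$-isogeny (the paper cites Section~\ref{subsubsec:Neighboursof1728}, you cite Example~\ref{cor:endo_of_1728}, but these are the same content). Your write-up is more explicit about which clause of Definition~\ref{def:attaching_along_j} fails in each case, which is a helpful clarification.
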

\begin{proof}
Proposition \ref{prop:neighbour_set} shows that, except at $1728$, the neighbours of the twists are exactly the same. Attachment along a $j$-invariant (Definition \ref{def:attaching_along_j}) only happens when at least one of the neighbours is distinct.

At $\jp = 1728$, we saw in \ref{subsubsec:Neighboursof1728} that the twists are connected by a $2$-isogeny in $\FpGraphtwo$. 
\end{proof}

By a new edge in $\FpBarGraphtwo$ we mean an edge that does not come from an edge in $\FpGraphtwo$.

\begin{prop}[Possible new edges and attachments]
\label{prop:possible_attachments}
A new edge in $\FpBarGraphtwo$ between $\Fp$ $\jp$-invariants can only be added between vertices whose $j$-invariants correspond to the roots of 
\[f(X) = X^2 + 191025X - 121287375\]
in $\Fp$, provided these are supersingular $\Fp$ $j$-invariants not equal to $-3375, 1728$ or $0$.

Attachment cannot happen at $\jp = 0, 1728$ or $-3375.$
\end{prop}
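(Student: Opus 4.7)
The plan is to first reduce the possible endpoints of a new edge to a short list via Lemma~\ref{lemma:one-two-isogeny} and Corollary~\ref{cor:double_edges_for_ell_2}, and then to eliminate each of the exceptional candidates $0, 1728, -3375$ by a direct analysis of the factorization of $\Phi_2(j, X)$ together with the volcano structure of $\FpGraphtwo$ developed in the previous subsection.

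First I would invoke Lemma~\ref{lemma:one-two-isogeny}: any new edge $[a,b]$ between $\Fp$-vertices in $\FpBarGraphtwo$ comes from a $2$-isogeny defined over $\Fpbar$ but not over $\Fp$, and hence its Frobenius conjugate provides a second inequivalent isogeny between the same two vertices, forcing a double edge $[a,b]$ in $\FpBarGraphtwo$. In particular, both endpoints must admit a double edge, so Corollary~\ref{cor:double_edges_for_ell_2} confines them to $\{0, 1728, -3375\}$ or to the roots of $f(X) = X^2 + 191025X - 121287375$.

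It then remains to rule out $\jp = 0, 1728, -3375$ as an endpoint of a new edge joining it to a distinct $\Fp$-vertex. For $\jp = 1728$, the factorization $\Phi_2(1728, X) = (X - 1728)(X - 287496)^2$ leaves only $287496$ as a distinct neighbor besides the self-loop of Example~\ref{cor:endo_of_1728}, and both isogenies to $287496$ are written down explicitly over $\Fp$ by the rational formulas in Corollary~\ref{cor:double_edges_for_ell_2}, so every edge at $\jp = 1728$ is $\Fp$-rational. For $\jp = -3375$, the factorization $\Phi_2(-3375, X) = (X - 16581375)(X + 3375)^2$ leaves only $16581375$ as a distinct neighbor, appearing with multiplicity one; since $-3375$ is always on the floor by Example~\ref{prop:-3375}, each twist vertex $v_{-3375}, w_{-3375}$ has an $\Fp$-rational vertical $2$-isogeny to a vertex with $j$-invariant $16581375$, and these $\Fp$-edges already realize the unique edge $[-3375, 16581375]$ in the spine. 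The two remaining $2$-isogenies at $-3375$ are self-loops, and so cannot contribute an attaching edge to a distinct vertex.

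For $\jp = 0$, the factorization $\Phi_2(0, X) = (X-54000)^3$ shows the only neighbor is $54000$, and by Example~\ref{example:j=0_always_on_the_floor} each twist $v_0, w_0$ lies on the floor and admits a single $\Fp$-rational $2$-isogeny landing on a vertex with $j$-invariant $54000$. The three order-$2$ kernels in $E_0[2]$ are cyclically permuted by the order-$3$ subgroup of $\mathrm{Aut}_{\Fpbar}(E_0)$, so the three outgoing $2$-isogenies become equivalent over $\Fpbar$; combined with Remark~\ref{Remark:UndirectedGraphs}, which warns that multiplicities at $\jp = 0$ are not preserved in the undirected graph, the non-$\Fp$ isogenies from $E_0$ do not produce new attaching edges. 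The main obstacle I expect will be the automorphism and multiplicity bookkeeping at $\jp = 0$, where reconciling the directed triple edge coming from $\Phi_2(0, X)$ with the single effective undirected edge in the spine requires the same care already flagged in the analysis of $\jp = 0$ for $\ell = 3$ in Example~\ref{example:neighbours_loops_ell3}.
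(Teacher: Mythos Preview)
Your proposal is correct and follows essentially the same route as the paper: invoke Lemma~\ref{lemma:one-two-isogeny} to force a double edge, restrict via Corollary~\ref{cor:double_edges_for_ell_2}, and then eliminate $0,1728,-3375$ case by case using the factorizations of $\Phi_2(\jp,X)$.

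Two small remarks. Your citation for ``$-3375$ is always on the floor'' points to the supersingularity criterion (Example~\ref{prop:-3375}) rather than to the later example that actually establishes the floor claim; the fact is correct but the label is off. At $\jp=0$ your automorphism discussion is correct but unnecessary: once you have observed that $54000$ is the \emph{only} neighbor of $0$ in $\FpBarGraphtwo$ and that $v_0$ already has an $\Fp$-rational edge to a vertex with $j$-invariant $54000$, no new edge can attach $0$ to a different component, and you are done. The paper's argument stops there; the equivalence of the three kernels under $\mathrm{Aut}_{\Fpbar}(E_0)$ is an accurate side observation but not needed for the proposition.
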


\begin{proof}
Let $v_\mathbf{a},w_\mathbf{b}\in\FpGraphtwo$ correspond to $j$-invariants $\mathbf{a},\mathbf{b}$, respectively, such that there is no edge in $\FpGraphtwo$ between $v_a$ and $w_b$, but there is an edge $[a,b]$ in $\FpBarGraphtwo$. By Lemma \ref{lemma:one-two-isogeny}, we obtain a two inequivalent edges $[\mathbf{a},\mathbf{b}],[\mathbf{a},\mathbf{b}] \in \FpBarGraphtwo$. By Lemma \ref{lemma:double_edge_lemma}, $\mathbf{a},\mathbf{b}$ must both be one of $0, 1728, -3375$ or an $\Fp$ root of $f(X) = X^2 + 191025X - 121287375$. However, no new edges can occur at the $j$-invariants $0, 1728$ and $-3375$ (see the discussion after the proof of Lemma \ref{lemma:double_edge_lemma}):
\begin{enumerate}
    \item For $\jp=0$ is already connected to its only neighbor $\jp=54000$ in $\FpGraphtwo$, as there are no isolated points in $\FpGraphtwo$.
    
    \item For $\jp= 1728$,
    all $2$-isogenies are defined over $\Fp$.

     \item For $\jp = -3375$, there are always two self-loops. Attachment is not possible, as it would require two additional inequivalent outgoing $2$-isogenies, giving $4$ edges at $-3375$ in $\FpBarGraphtwo$. \qedhere
\end{enumerate}

\end{proof}

\begin{prop}[Folding happens for the component containing $\jp = 1728$]
\label{prop:1728_folding} Let $p \equiv 3 \mod 4$ be prime. The connected component $V \in \FpGraphtwo$ containing the vertices corresponding to $\jp = 1728$ is symmetric over a reflection passing through the vertices $v_{1728}$ lying on the surface of $V$ and $w_{1728}$ lying on the floor of $V$.
In particular, the component $V$ folds when we pass from $\FpGraphtwo$ to $\FpSubGraph$.

\end{prop}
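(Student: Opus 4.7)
The plan is to build the reflection as the twin-swap $\iota\colon v_\jp\leftrightarrow w_\jp$ extended to all of $V$ and verify that it is a graph automorphism. Example \ref{cor:endo_of_1728} places $v_{1728}$ (surface) and $w_{1728}$ (floor) in a common component of $\FpGraphtwo$ via an explicit vertical $2$-isogeny, so $V$ contains both, and $\jp=1728$ is the unique supersingular $j$-invariant whose two twins sit on different levels. I will handle $p\equiv 3\pmod 8$ and $p\equiv 7\pmod 8$ separately, since Theorem \ref{theorem:volcanos} gives $V$ very different shapes in the two cases.

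For $p\equiv 3\pmod 8$, Theorem \ref{theorem:volcanos} makes $V$ a claw with surface $\{v_{1728}\}$ and three vertical edges to the floor. One edge is the one to $w_{1728}$ from Example \ref{cor:endo_of_1728}; the factorization $\Phi_2(1728,X)=(X-1728)(X-287496)^2$ forces the other two floor neighbors to have $j$-invariant $287496$, so they form the twin pair $v_{287496},w_{287496}$. The involution fixing $v_{1728},w_{1728}$ and swapping $v_{287496}\leftrightarrow w_{287496}$ is manifestly a graph automorphism of $V$ through the axis $(v_{1728},w_{1728})$.

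For $p\equiv 7\pmod 8$, the surface of $V$ is a cycle; list its vertices cyclically as $u_0=v_{1728},u_1,\ldots,u_{n-1}$ and set $\jp_i=j(u_i)$. The key claim is that this sequence is palindromic around $u_0$, i.e. $\jp_i=\jp_{n-i}$ for every $i$. The base case $i=1$ follows from $\Phi_2(1728,X)=(X-1728)(X-287496)^2$, which forces both surface neighbors of $v_{1728}$ to have $j$-invariant $287496$. For the inductive step I apply Proposition \ref{prop:neighbour_set}(2b) to the twin pair $u_i,u_{n-i}$ (both having common $j$-invariant $\jp_i\neq 1728$ by the inductive hypothesis): their neighbors share a common set of three distinct $j$-invariants, two of which occupy ``surface slots'' (the two surface-cycle neighbors of each vertex) and one of which occupies the ``floor slot''. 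The inductive hypothesis identifies $\jp_{i-1}=\jp_{n-i+1}$ on one surface slot, and Corollary \ref{cor:surface_floor_is_true} — whose content is exactly that, away from $1728$, a $j$-invariant cannot appear simultaneously on the surface and on the floor — cleanly separates the remaining surface slot from the floor slot, forcing $\jp_{i+1}=\jp_{n-i-1}$.

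The length $n$ is necessarily odd, because the only surface vertex of $V$ whose twin is not on the surface is $v_{1728}$, so the twin-swap can admit no other fixed point on the cycle; hence the induction terminates at $i=(n-1)/2$ with the two middle vertices forming a twin pair. Extending $\iota$ to the floor is then routine: each floor vertex of $V$ has a unique surface parent (Theorem \ref{theorem:volcanos} forbids horizontal floor edges for $p\equiv 7\pmod 8$), Proposition \ref{prop:neighbour_set}(2b) gives that the floor vertices below $u_i$ and $u_{n-i}$ share a $j$-invariant, and uniqueness of vertical parents forces them to be twins of each other for $i\neq 0$, while $w_{1728}$ is fixed. The resulting $\iota$ preserves every edge of $V$ and its $V$-fixed locus is exactly $\{v_{1728},w_{1728}\}$, giving the required reflection. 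Folding is then immediate, as $\iota$ sends each $\jp$-invariant of $V$ to its twin. The principal obstacle is the bookkeeping in the inductive step, where the distinctness conclusion of Proposition \ref{prop:neighbour_set}(2b) together with Corollary \ref{cor:surface_floor_is_true} must jointly be leveraged to rule out the bad matching inside the common three-element neighbor set; this is precisely the place where the hypothesis $p\equiv 3\pmod 4$ and the exceptional status of $\jp=1728$ do the work.
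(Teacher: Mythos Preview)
Your proof is correct and follows essentially the same approach as the paper: in the $p\equiv 7\pmod 8$ case, walk outward along the surface cycle from $v_{1728}$, using Proposition~\ref{prop:neighbour_set}(2b) together with Corollary~\ref{cor:surface_floor_is_true} to propagate the twin-pairing at each step, and then close up using the parity of the cycle length. The only difference is that the paper obtains odd cycle length directly from the fact that $h(-p)$ is odd for $p\equiv 3\pmod 4$, which is cleaner than your fixed-point argument for the reflection.
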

To understand this symmetry, picture the surface of the component $V$ as a perfect circle with equidistant vertices and all the edges to the floor are perpendicular to the surface. Then $V$ is symmetric with respect to the line extending the edge $[v_{1728}, w_{1728}]$.

This symmetry is mentioned in Remark 5 of \cite{CSIDH}, albeit without proof or reference. 

\begin{proof} 
For $p \equiv 3 \mod 4$, the possible shapes of the component $V$ are described in Section \ref{sec:p3mod4}.

\begin{enumerate}
    \item Case $p\equiv 3 \mod 8$. $V$ is a claw (see Figure \ref{Fig:Claws}) and the proof of Proposition \ref{prop:possible_attachments} shows that there is one $2$-isogeny down from the surface vertex corresponding to $\jp=1728$ to each vertex with $j$-invariant $287496$ and the other vertex corresponding $j$-invariant $1728$. The claw $V$ is clearly symmetric and folds as described. 
    
    \item Case $p\equiv 7 \mod 8$. In this case, $h(-p)$ is odd.

    We may assume that $h(-p) > 1$ (otherwise we are in the claw situation discussed above).

    $v = v_{287496}$ and $w = v_{287496}^{t}$ are both on the surface. By Proposition \ref{prop:neighbour_set}, their neighbours have the same $j$-invariants, say: $1728, \mathbf{a},\mathbf{b}$. Say the neighbours of $v$ are $v_\mathbf{a}, v_\mathbf{b}$ and the neighbours of $w$ are $w_\mathbf{a},w_\mathbf{b}$. Assume that $v_\mathbf{a}$ is on the floor. Since $\mathbf{a} \neq 1728$, Lemma \ref{cor:surface_floor_is_true} tells us $w_\mathbf{a}$ is also on the floor. Thus, both $v_\mathbf{b}$ and $w_\mathbf{b}$ are on the surface and the symmetry is preserved.

\[
\begin{tikzpicture}[node distance=1cm,auto]
    \draw (2,0) ellipse (1);
    \node [label={[shift={(0,-0.1)}]$1728$}] (v) at (2,-1)  {$\bullet$};
    \node[label={[shift={(0,-0.7)}]$1728$}] (w) at (2,-2)  {$\bullet$};
    \node[label={[shift={(-0.6,-0.5)}]$287496$}] (w1) at (1.29289321881345,-0.707106781186547)  {$\bullet$};
    \node[label={[shift={(0.6,-0.5)}]$287496$}] (v1) at (2.7,-0.707106781186547)  {$\bullet$};
    \node[label={[shift={(-0.3,-0.5)}]$w_{\mathbf{a}}$}] (w2) at (1.29289321881345,-1.707106781186547)  {$\bullet$};
    \node[label={[shift={(0.3,-0.5)}]$v_{\mathbf{a}}$}] (v2) at (2.7,-1.707106781186547)  {$\bullet$};
    \node at (1, 0){$\bullet$};
    \node at (3, 0){$\bullet$};
    \node[left] at (1, 0){$w_{\mathbf{b}}$};
    \node[right] at (3, 0){$v_{\mathbf{b}}$};
    \draw (2,-1) -- (2,-2);
    \draw (1.29289321881345,-0.707106781186547) -- (1.29289321881345,-1.707106781186547);
    \draw (2.7,-0.707106781186547) -- (2.7,-1.707106781186547);
\end{tikzpicture}
\]

    Continuing in this manner, because $h(-p)$ is odd, we will arrive at a pair of vertices $v_\mathbf{c}, w_\mathbf{c}$ that share an edge, accounting for all of the vertices in the component. The symmetry holds. \qedhere
\end{enumerate}

\end{proof}

\begin{remark}
Proposition \ref{prop:1728_folding} shows, for $p \equiv 7 \mod 8$, the $2$-isogeny between the pair of vertices $v_\mathbf{c},w_\mathbf{c}$ corresponding to the same $\jp$-invariant $\mathbf{c}$ at the end of the process will be precisely one loop at $\mathbf{c}$ in $\FpBarGraphtwo$. The only vertices with precisely one self-isogeny in $\FpBarGraphtwo$ are $\jp = 8000$ and $\jp=1728$. Since $v_\mathbf{c}, w_\mathbf{c}$ are on the surface of $\FpGraphtwo$, $\mathbf{c} = 8000$ (see Section \ref{subsubsec:SelfIsogenies}). There is an $\Fp$-rational $2$-power isogeny between any two supersingular elliptic curves with $\jp$-invariants $1728$ and $8000$.
\end{remark}

\begin{cor}[Folding] \label{cor:folding_only_once}
Suppose $V \subset \FpGraphtwo$ is a component which folds in $\FpSubGraph\subset\FpBarGraphtwo$. \begin{enumerate}
    \item If $p \equiv 1 \mod 4$, then $V$ is a single edge between two vertices with $j$-invariant $8000.$
    \item If $p \equiv 3 \mod 4$, then $V$ contains both the vertices corresponding to $\jp = 1728$.
\end{enumerate} 
\end{cor}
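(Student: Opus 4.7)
The plan splits along the two congruence classes $p \bmod 4$ and draws on the structural results already in place.

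For Case~1 ($p \equiv 1 \pmod 4$), Lemma~\ref{lem:spine-1-mod-12} gives that each connected component of $\FpGraphtwo$ is a single edge, so if $V$ folds then necessarily $V = \{v_a, w_a\}$ for some $a \in \Fp$ and its edge becomes a self-loop at $a$ in $\FpBarGraphtwo$. From the factorization $\Phi_2(X,X) = -(X+3375)^2(X-1728)(X-8000)$ recorded in Equation~\eqref{eq:self-loops-for-1728} we see $a \in \{-3375, 1728, 8000\} \pmod p$. The value $1728$ is excluded because $\jp=1728$ is supersingular only when $p \equiv 3 \pmod 4$. To rule out $a = -3375$, I would observe that $\Phi_2(-3375,Y) = (Y - 16581375)(Y+3375)^2$ makes the edge $[-3375, 16581375]$ simple in $\FpBarGraphtwo$, so the contrapositive of Lemma~\ref{lemma:one-two-isogeny} forces it to come from an $\Fp$-rational isogeny and hence from an edge in $\FpGraphtwo$; but under the fold hypothesis the unique edge leaving $v_{-3375}$ would go to $w_{-3375}$, not to any vertex of $j$-invariant $16581375$, giving the desired contradiction. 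Thus $a = 8000$.

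For Case~2 ($p \equiv 3 \pmod 4$), Proposition~\ref{prop:1728_folding} already provides that the component containing $\jp = 1728$ folds; the real task is the converse, that no other component can fold. The main input is Corollary~\ref{cor:surface_floor_is_true}: for any $a \neq 1728$, the twists $v_a$ and $w_a$ lie at the same level of their respective volcanoes. When $p \equiv 3 \pmod 8$ each component is a claw with a single surface vertex, so a folding claw with surface vertex $v_a$ ($a \neq 1728$) would need $w_a$ on its surface as well --- impossible, since the claw has only one surface vertex. When $p \equiv 7 \pmod 8$ the surface of a volcano is a cycle of length $n = \mathrm{ord}_{\Cl(\OO_K)}(\mathfrak{p})$ where $\mathfrak{p}$ lies above $2$. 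Since $h(-p)$ is odd for $p \equiv 3 \pmod 4$ and $n \mid h(-p)$, the length $n$ is odd. If $V$ folds and omits $\jp = 1728$, then by Corollary~\ref{cor:surface_floor_is_true} together with the folding hypothesis, every surface vertex of $V$ is paired with its twist on $V$'s own surface, so the surface has even cardinality --- contradicting odd $n$.

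The main obstacle I anticipate is the volcano parity argument for $p \equiv 7 \pmod 8$: care is needed to conclude that the twist of a surface vertex of $V$ lies on \emph{the same} surface rather than on some other volcano. This follows by combining the folding hypothesis ($w_a \in V$ whenever $v_a \in V$) with Corollary~\ref{cor:surface_floor_is_true} (which places $w_a$ at the surface level): together they place $w_a$ on the surface of $V$ itself. The exceptional status of $\jp = 1728$ is exactly the slack in this parity argument: by Example~\ref{cor:endo_of_1728}, $w_{1728}$ lies on the floor rather than on the surface, so in the $1728$-component the surface consists of $v_{1728}$ together with twist-pairs, producing an odd count that is compatible with odd $n$.
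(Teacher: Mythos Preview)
Your proof is correct and follows essentially the same approach as the paper's. In Case~1 you and the paper both reduce to $a \in \{-3375, 1728, 8000\}$ via the factorization of $\Phi_2(X,X)$, exclude $1728$ by supersingularity, and exclude $-3375$ using Lemma~\ref{lemma:one-two-isogeny}; the only difference is that the paper applies the lemma to the two self-loops at $-3375$ (concluding neither is $\Fp$-rational), while you apply it to the simple edge $[-3375, 16581375]$ (concluding it \emph{is} $\Fp$-rational) --- dual versions of the same argument. In Case~2 your parity argument via Corollary~\ref{cor:surface_floor_is_true} and oddness of $h(-p)$ is exactly the paper's, though you are more explicit about separating the $p \equiv 3 \pmod 8$ (claw) and $p \equiv 7 \pmod 8$ cases and about why the surface of a single volcano has odd size ($n \mid h(-p)$); the paper's proof states ``the surface has $h(-p)$ vertices'' somewhat loosely but means the same thing.
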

\begin{proof}
 \begin{enumerate}
     \item  If $p \equiv 1 \mod 4$, then $V$ is an edge: $[v_\mathbf{a}, v_\mathbf{b}]$. Folding happens if and only if $\mathbf{a}= \mathbf{b}$, resulting in a self-$2$-isogeny in $\FpBarGraph$. For $p \equiv 1 \mod 4$, the only vertices with self-$2$-isogenies are $\jp = -3375,8000$, when these $j$-invariants are supersingular (see Section \ref{subsubsec:SelfIsogenies}). 
     
     For $j = 8000$, there is a $2$-isogeny from the curve with $j$-invariant $8000$ given by the equation $E: y^2 = x^{3} - 4320 x - 96768 $ to its twist $y^2 = x^{3} - 17280 x - 774144 $. The latter is a twist of $E$ by $2$, and $8000$ is only supersingular for $p \equiv 5 \mod 8$, so $2$ is a nonsquare modulo $p$. 
     
     For $j=-3375$, there are two self-loops in $\FpBarGraphtwo$, and at least one of them not defined over $\Fp$. Applying Lemma \ref{lemma:one-two-isogeny} to this loop, we conclude that neither of these loops are defined over $\Fp$ and folding does not happen for the edge containing $-3375$.
\item 
If $p \equiv 3 \mod 4$, let $V$ be a component that folds. The surface has  $h(-p)$ vertices and this class number is odd. We assume that $V$ folds, so every vertex in it gets identified with the vertex corresponding to its twist. By Corollary  \ref{cor:surface_floor_is_true}, for $j \neq 1728$, the two vertices are either both on the surface or both on the floor. Since there are odd number of vertices on the surface, there cannot only be pairs of twists on the surface, so $V$ must contain the two vertices corresponding to $\jp = 1728$, one on the floor and the other on the surface. \qedhere

 \end{enumerate}
\end{proof}

Now, we prove Theorem \ref{theorem:stacking-folding-attaching}.

\begin{proof}[Proof of Theorem \ref{theorem:stacking-folding-attaching}]
Recalling the possible events when passing from $\FpGraphtwo$  to $\FpSubGraph\subset \FpBarGraphtwo$. We  identify the vertices with the same $j$-invariants, causing:

\begin{enumerate}
    \item Folding: Vertices corresponding to twists of the same $j$-invariant lie on the same component and get identified when we pass to $\FpBarGraphtwo$. \label{case:folding}
    \item Stacking: two isomorphic volcanoes (not just as graphs, but with vertices corresponding to the same $j$-invariants) have the twist vertices identified. 
    \item Attaching along a $j$-invariant: Corollary \ref{cor:attach_j_invariant_2} shows this is not possible. \end{enumerate}

First, let $p \equiv 1 \mod 4$. The components of $\FpGraphtwo$ are edges. Corollary \ref{cor:folding_only_once} shows that the edge containing the two vertices with $j$-invariant $8000$ folds (if $8000$ is a supersingular $j$-invariant for $p$, i.e. $p \equiv 5 \mod 8$). For the other edges, Proposition \ref{prop:neighbour_set} says that for any edge $[v_a,v_b] \in \FpGraphtwo$ the twists $w_a, w_b$ also give an edge $[w_a, w_b] \in \FpGraphtwo$. Moreover, Proposition \ref{prop:possible_attachments} gives that there is at most $1$ attachment among these edges. 

For $p \equiv 3 \mod 4$, take any component $V$ of $\FpGraphtwo$ and any vertex $v_a$ on the surface of $V$, $a\neq1728$. Choose a neighbour $v_b$ of $v_a$. Continue along the surface in the direction of the edge $[v_a,v_b]$ and consider the sequence $j$-invariants of neighbours $\mathcal{V} = \{v_i\}$ until we reach a vertex with $j$-invariant $a$. Similarly, on the component $W$ containing the edge $w_a, w_b$, consider the sequence of $j$-invariants of the neighbours $\mathcal{W} = \{w_i\}$ until we reach a vertex with $j$-invariant $a$ (every surface is a cycle, so this will happen in finitely many steps). We have the following possible outcomes:
\begin{enumerate}
    \item For some $i$, we find that $v_i \neq w_i$. This means that the curve $i$ away from $v_a$ on $V$ has a different neighbour than its twist, which is $i$ away from $w_a$. But this can only happen for $w_i = 1728$ and hence the component folds by Proposition \ref{prop:1728_folding}. 
    \item The sequences are equal, but $\mathcal{V}$ stops at the twist $w_a$ and $\mathcal{W}$ stopped at the curve $v_a$. Then $v_a, w_a$ are on the same component $V$ and the cycle on the surface has length $2 \cdot \text{length}(\mathcal{V})$. As $h(-p)$ is odd, this is not possible.
    \item The sequences $\mathcal{V}$ and $\mathcal{W}$ are the same and the components $V$ and $W$ are isomorphic as graphs upon replacing labels of vertices by their $j$-invariants. In this case, the components $V$ and $W$ stack.
\end{enumerate}
Finally, in Proposition \ref{prop:possible_attachments}, we showed that at most one attachment is possible. 
\end{proof}

Finally, we study the possible attachments given by the roots of the polynomial $f(X) =   X^2 + 191025X - 121287375$. Because the polynomial $f(X)$ is the Hilbert class polynomial of $\Q(\sqrt{-15})$, roots of $f(X)$ in $\Fpbar$ give supersingular $j$-invariant if and only if $p$ is inert in $\Q(\sqrt{-15})$. By factoring the discriminant of $f(X)$ 
\begin{align*}
    191025^2 +4 \cdot 121287375 = 36975700125 = 3^{6} \cdot 5^{3} \cdot 7^{4} \cdot 13^{2},
\end{align*} we see that there is a root in $\Fp$
 if and only if $p \equiv \pm 1 \mod 5$. Combining with the congruence condition that $\left( \frac{-15}{p} \right)= -1$, we obtain that there the roots of $f(X)$ are  $j$-invariants of a supersingular elliptic curves defined over $\Fp$:
\begin{enumerate}
    \item $p \equiv 1 \mod 4$ and $p \equiv 1 \mod 3$ and $p \equiv \pm 1 \mod 5$ $\longrightarrow$ $p \equiv 1, 24 \mod 60$
    \item $p \equiv 3 \mod 4$ and $p \equiv 2 \mod 3$ and $p \equiv \pm 1 \mod 5$ $\longrightarrow$ $p \equiv 11, 59 \mod 60$
\end{enumerate}

We have an additional result about when attachment occurs, as a corollary to Proposition \ref{prop:possible_attachments}:

\begin{cor}[Attachment happens for $p \not \equiv 7 \mod 8$.] Suppose that $p \not\equiv 7 \mod 8$ and suppose that $\jp$ and $\jp'$ are two distinct $\Fp$-roots of $f(X) = X^2 + 191025X - 121287375$ (it suffices to assume $p >101$). Then, the new edge $[\jp,\jp' ]\in \FpBarGraph$ is an attaching edge.
\end{cor}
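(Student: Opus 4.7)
The proof breaks into three pieces that I plan to combine: first, establish that there is a double edge $[\jp,\jp']\in\FpBarGraphtwo$; second, show that under the hypotheses $\FpGraphtwo$ contains no multi-edges; third, rule out the mixed case in which exactly one of the two parallel $2$-isogenies forming the double edge is $\Fp$-defined. Together these imply that both parallel isogenies fail to be $\Fp$-defined, which by Definition \ref{def:attaching} makes $[\jp,\jp']$ an attaching edge.

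The first step is handled by CM theory. The polynomial $f(X)$ is the Hilbert class polynomial of the maximal order $\OO_K=\Z[\tfrac{1+\sqrt{-15}}{2}]$ of $K=\Q(\sqrt{-15})$, so its two roots are the $j$-invariants of the two elliptic curves over the Hilbert class field with CM by $\OO_K$. Because $-15\equiv 1\pmod{8}$, the prime $2$ splits as $(2)=\p\bar{\p}$ with $\p\neq\bar{\p}$, and since $h(K)=2$, the class $[\p]$ generates $\Cl(\OO_K)\cong\Z/2\Z$ (and coincides with $[\bar{\p}]$ because $\p\bar{\p}$ is principal). The actions of $\p$ and $\bar{\p}$ therefore give two inequivalent $2$-isogenies interchanging the two CM curves, with distinct order-$2$ kernels $E[\p]$ and $E[\bar{\p}]$ spanning the full $2$-torsion. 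Reducing modulo $p$, which is inert in $\OO_K$ under our hypotheses, yields two inequivalent $2$-isogenies in $\FpBarGraphtwo$ between curves with $j$-invariants $\jp$ and $\jp'$, giving the desired double edge.

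For the second step, combining the supersingularity conditions for the roots of $f(X)$ worked out just before the corollary with $p\not\equiv 7\pmod{8}$ forces $p\equiv 1\pmod{4}$ or $p\equiv 3\pmod{8}$. In the first case every connected component of $\FpGraphtwo$ is a single edge by Lemma \ref{lem:spine-1-mod-12}; in the second case every component is a claw by the analysis in Section \ref{sec:p3mod4}. Neither structure admits multi-edges, so the double edge $[\jp,\jp']$ cannot arise from two $\Fp$-edges. For the third step, suppose one isogeny $\phi$ in the double edge were $\Fp$-defined while the parallel isogeny $\psi$ were not; then the $p$-power Frobenius conjugate $\psi^{\Frob}$ would be a third $2$-isogeny $E_\jp\to E_{\jp'}$, inequivalent to $\phi$ (which is Frobenius-invariant) and to $\psi$ (which is not, since $\psi^{\Frob}=\phi$ would force $\psi=\phi$). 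Since $\jp\neq 0$, Corollary \ref{cor:double_edges_for_ell_2} prohibits a triple edge at $\jp$ in $\FpBarGraphtwo$, a contradiction.

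The main obstacle is step one: showing that the double edge predicted at $\jp$ by the factor $f(X)$ of $\Res_2(X)$ lands specifically at $\jp'$ and not at a root of another irreducible factor of $\Res_2(X)$. The CM argument via $\Cl(\OO_K)\cong\Z/2\Z$ is the natural way to pin this down; the remaining volcano and multiplicity arguments are then routine applications of the tools developed earlier in this section.
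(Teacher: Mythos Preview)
Your proof is correct, and it takes a genuinely different route from the paper in the case $p\equiv 3\pmod 8$. The paper treats the two residue classes separately: for $p\equiv 1\pmod 4$ it uses exactly your triple-edge argument, but for $p\equiv 3\pmod 8$ it instead assumes $v_\jp,v_{\jp'}$ lie on the same claw, observes that this produces two non-backtracking $3$-cycles through $\jp$ (hence two cyclic endomorphisms of degree $8$), and then rules this out by computing the resultant $\Res(f(X),\Phi_8(X,X))$ and checking that its prime factors are all at most $101$. Your uniform triple-edge argument avoids this resultant computation entirely and is cleaner: in the claw case, either $v_\jp$ is on the surface, in which case all three $2$-isogenies from it are $\Fp$-rational and Proposition~\ref{prop:neighbour_set}(2b) forbids two of them landing at the same $j$-invariant; or $v_\jp$ is on the floor and has a unique $\Fp$-rational $2$-isogeny, so your step~3 applies verbatim. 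The paper's $\Phi_8$ argument does buy a slightly stronger conclusion (namely that the attaching edge genuinely connects two distinct claws rather than a claw to itself), but that is not what the corollary asserts.

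Your CM argument in step~1 is also an improvement in clarity: the paper takes the existence of the double edge $[\jp,\jp']$ essentially as given from the factorisation of $\Res_2(X)$, whereas you explain why the repeated root of $\Phi_2(\jp,Y)$ is precisely $\jp'$ by using that $2$ splits in $\Q(\sqrt{-15})$ and that both primes above $2$ represent the nontrivial class in $\Cl(\OO_K)\cong\Z/2\Z$.

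One small wording issue: in step~2 what you actually need (and what Proposition~\ref{prop:neighbour_set} supplies) is that no vertex of $\FpGraphtwo$ has two neighbours with the same $j$-invariant, not merely that $\FpGraphtwo$ has no multi-edges; the two $\Fp$-isogenies forming a putative double edge could a priori land on different twists $v_{\jp'},w_{\jp'}$. This is a phrasing point only, since the stronger statement follows immediately from the component shapes (single edges or claws) together with Proposition~\ref{prop:neighbour_set}(2b).
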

Rephrased, this means that attachment happens whenever it can happen (i.e., when the roots of $f(X)$ are in $\mathbb{F}_p$) for $p \not\equiv 7\mod 8$.

\begin{proof}
First, let $p\equiv 1\pmod{4}$. The $\FpGraphtwo$ components are horizontal edges. Suppose that the $j$-invariant $\jp$ admits a double edge $[\jp,\jp'] \in \FpBarGraph$ that it is not an attaching edge, i.e., there is an edge $[v_\jp,v_{\jp'}]$ in $\FpGraphtwo$. By Lemma \ref{lemma:one-two-isogeny}, there is then a triple edge $[\jp, \jp]$. 
This is only possible if $\jp =0$. For $\jp$ or $\jp'$ to be equal to 0, we would need $X$ to be a factor of $f(X)$. Since $121287375 = 3^{6} \cdot 5^{3} \cdot 11^{3}$, for $p > 11$ attachment happens whenever it can.

Next, let $p\equiv 3\pmod{4}$. The components of $\FpGraphtwo$ are claws.
If the double edge is not between two different components, then $v_\jp$ and $v_{\jp'}$ are on the same claw (for some choice of the twists). Assume, $\jp \neq 1728$, they both lie on the floor.

Let $v_\mathbf{a}$ be the unique surface vertex of $V$ (see Figure \ref{fig:double_edge}).
    \begin{figure}[h!]
  \centering
  \includegraphics[width=50mm]{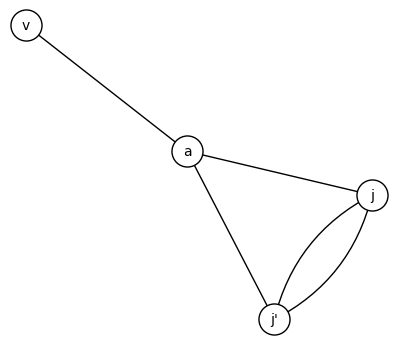}
\caption{The double edge from $\jp$ to $\jp'$.}
\label{fig:double_edge}
\end{figure}
This gives us two distinct loops in $\FpBarGraphtwo$ of length $3$.

These correspond to endomorphisms of norm $8$ in $\End_{\Fpbar}(E_{\jp})$.

We check for the existence of such an endomorphism using the modular polynomial $\Phi_8(X,X)$: We need to check whether the roots of $f(X)$ can simultaneously be the roots of the polynomial \begin{align*}
    \Phi_8(X,X)  =  &
    \left(-1\right) \cdot (X - 16581375)^{2} \cdot (X - 287496)^{2} \cdot (X + 3375)^{2} \cdot (X^{2} - 52250000 X + 12167000000)\\
     &\cdot (X^{3} + 3491750 X^{2} - 5151296875 X + 12771880859375)^{2} 
     \\ & \cdot (X^{3} + 39491307 X^{2} - 58682638134 X + 1566028350940383)^{2}.
\end{align*}
Take the resultant \begin{align*}
    \operatorname{Res}(f(X), \Phi_8(X,X))  = & 
    (-1) \cdot 3^{72} \cdot 5^{36} \cdot 7^{48} \cdot 11^{34} \cdot 13^{24} \cdot 37^{10} \cdot 41^{2} \cdot 43^8 \cdot 59^2 \cdot 71 \cdot 89^2 \cdot 101^2.
 \end{align*} For primes $p > 101$, this will be nonzero, and there is no such a loop in $\FpBarGraphtwo$, hence attachment happens. In the factorization of the resultant, there is one prime $p \equiv 11, 59 \mod 60$ and $3 \mod 4$. For $p = 11$, we only have one connected component of $\FpGraphtwo$, for $p = 59$, attachment happens.
\end{proof}

In the case $p \equiv 7 \mod 8$, attachments that can happen do \textit{not} necessarily.
We checked this for all primes $p \equiv 7 \mod 8$ between $50,000$ and $100,000$ such that the primes above $2$ do not generate the class group (in this case, there is only one component in $\FpGraphtwo$, see the following Section \ref{subsection:distances_of_components}). There are $217$ such primes, and for $41$ of them the attachment happens. However, there are $12$ primes $p$ for which the attachment can happen ($p\equiv11$ or $59$ $\mod 60$) but there is no attachment:
\begin{align*}
    53639, 58511, 66959, 71879, 72431, 72551, 79151, 86711, 88919, 90239, 93911, 99719.
\end{align*}

For $p =53639$, the two roots of $f(X)$ are $\jp = 30505$ and $\jp = 46665$. There are two elliptic curves with these $j$-invariants on the same component of $\FpGraphtwo$ which are $48$ edges apart.

\subsection{Distances of components of the $\Fp$-subgraph $\FpSubGraph$.} \label{subsection:distances_of_components}

In the above section, we have fully described how the spine $\FpSubGraph$ is formed by passing from $\FpGraphtwo$ to $\FpBarGraphtwo$. A natural question is how the spine $\FpSubGraph$ sits inside the graph $\FpBarGraph$.

For primes $p \equiv 1 \mod 4$, the subgraph is given by single edges (with a possibility of a few isolated vertices and one component of size $4$), as we proved in Section \ref{subsec:folding-stacking-attaching}. These components seem to be distributed the same way in the graph as random vertices: we compare the mean of the distances of the components with the distances between random vertices (100 random choices), normalized by the diameter. We compared these distances for $254$ primes with $p\equiv 1\mod{4}$ from $10253$ to $65437$. The primes were chosen to be spaced with a gap of at least $200$. Our results are shown in Figure ~\ref{fig:dist_comp}.

\begin{figure}[h!]
\begin{subfigure}{.45\textwidth}
  \centering
  \includegraphics[width=\linewidth]{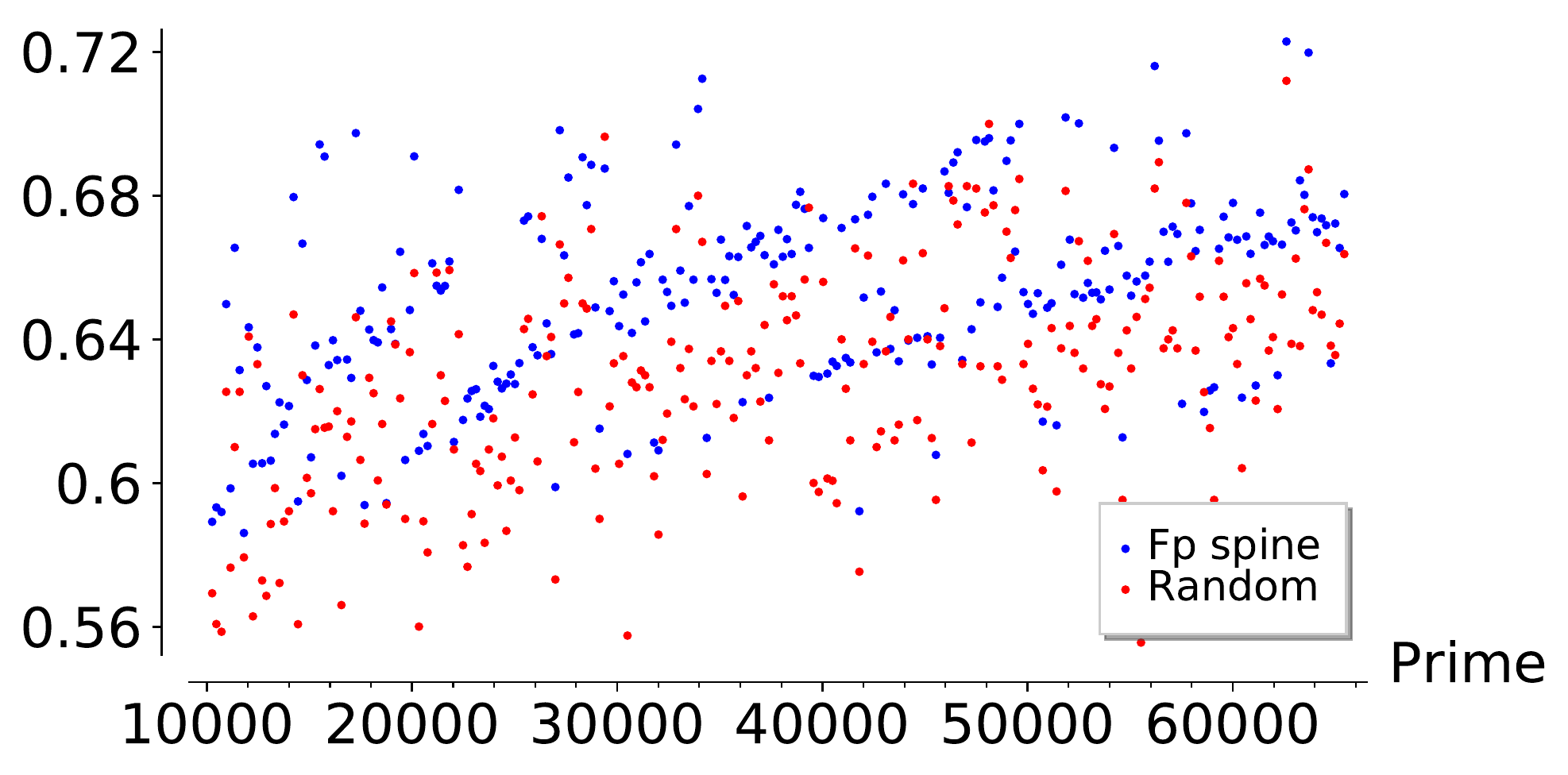}
  \caption{Normalized distance between $\FpSubGraph$ components (blue) and random pairs (red)}
\end{subfigure}
\begin{subfigure}{.45\textwidth}
  \centering
  \includegraphics[width=\linewidth]{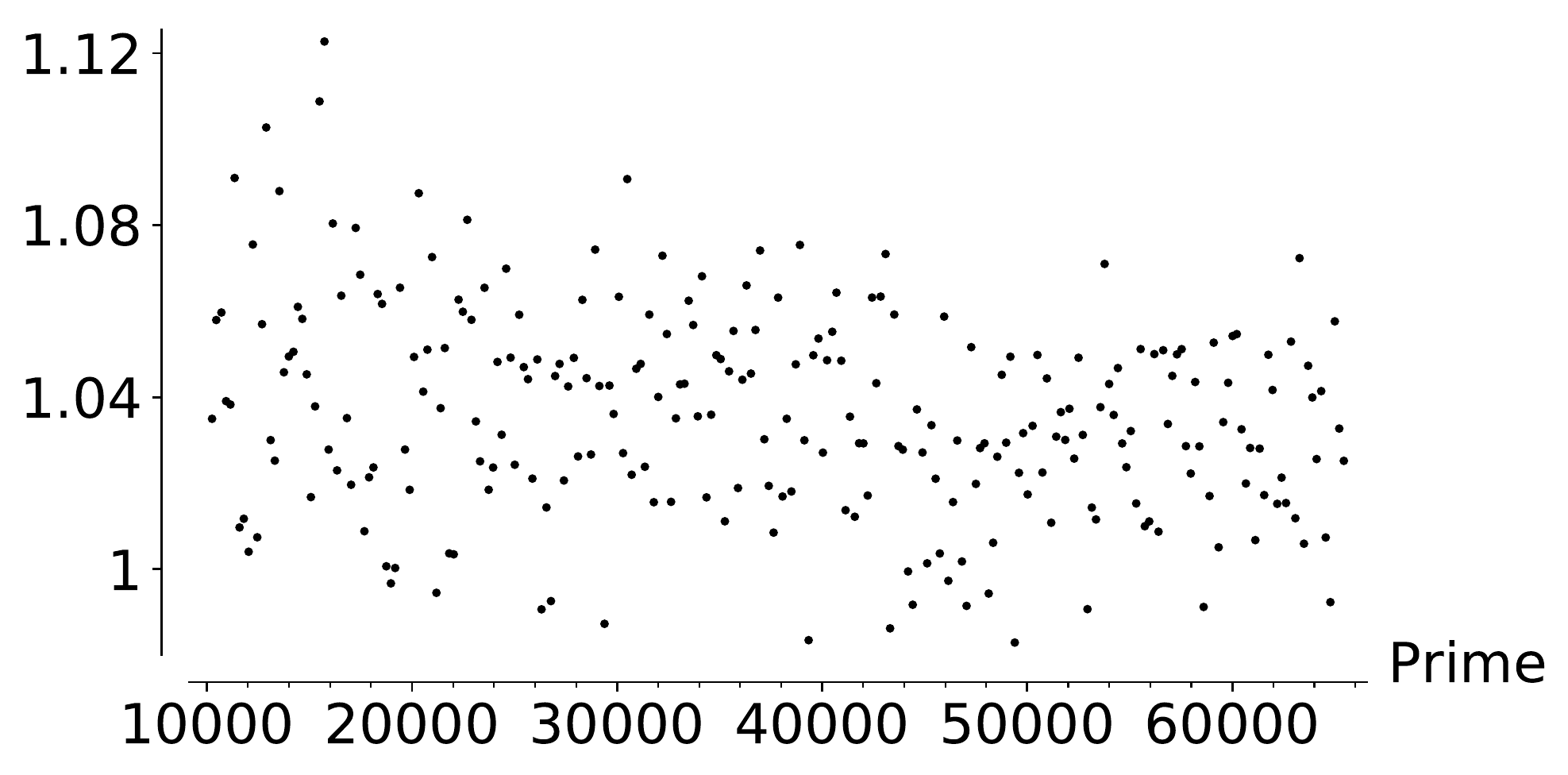}
  \caption{Ratio of distance between components vs distance between random pairs}
\end{subfigure}
\caption{Comparison of distances of $\FpSubGraph$ components versus distances between random vertices for $p\equiv1\mod{4}$.}
\label{fig:dist_comp}
\end{figure}

We do not know how to explain that the average distances between components seem to be larger than distance between two random points in $\FpBarGraphtwo$.

\subsubsection{$p \equiv 7 \mod 8$}

We start with the following easy lemma.

\begin{lemma}
Let $p\equiv 7\mod 8$ and set $K:=\Q(\sqrt{-p})$. Let $\mathfrak{p}$ denote a prime ideal of $\mathcal{O}_K$ above $(2)$ and suppose that $\langle\mathfrak{p} \rangle = \operatorname{Cl}(\mathcal{O}_K)$. 

Then, the $\FpGraphtwo$ is has only one connected component with $$\# \Cl(\OO_K) =  h(-p)$$ vertices on the surface and from every vertex on the surface, there is exactly one isogeny down.

A fortiori, the spine $\FpSubGraph \subset \FpBarGraph$ is connected.
\end{lemma}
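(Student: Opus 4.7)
The plan is to read off this statement directly from Theorem \ref{theorem:volcanos} (case 3(a)) together with the well-known description of the surface of a volcano in terms of the class group action, and then observe that connectedness of the spine follows formally from connectedness of $\FpGraphtwo$.

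First I would recall from Theorem \ref{theorem:volcanos}(3)(a) that, for $p \equiv 7 \mod 8$, each connected component of $\FpGraphtwo$ is a volcano of depth one: the surface is a cycle on which the class of $\mathfrak{p}$ in $\Cl(\OO_K)$ acts freely and transitively, and from each surface vertex there is exactly one $\Fp$-rational vertical $2$-isogeny down to the floor. Consequently, the length of each surface cycle equals the order $n$ of $\mathfrak{p}$ in $\Cl(\OO_K)$, and the number of connected components of $\FpGraphtwo$ is the index $[\Cl(\OO_K):\langle\mathfrak{p}\rangle] = h(-p)/n$, as stated in the paragraph following Theorem \ref{theorem:volcanos}. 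Under the hypothesis $\langle\mathfrak{p}\rangle = \Cl(\OO_K)$ we have $n = h(-p)$ and hence $h(-p)/n = 1$, so $\FpGraphtwo$ consists of a single volcano with exactly $h(-p)$ surface vertices and one downward edge from each of them.

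For the last assertion, I would invoke the description in Section \ref{subsec:folding-stacking-attaching} of how $\FpSubGraph$ is obtained from $\FpGraphtwo$: one first identifies the two vertices corresponding to each $j$-invariant (a quotient of the underlying graph) and then possibly adds new edges coming from isogenies defined only over $\Fpbar$. Both operations can only merge connected components, never split them. Since $\FpGraphtwo$ is connected by the preceding paragraph, its image $\FpSubGraph$ in $\FpBarGraph$ is connected as well.

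There is essentially no obstacle here: the work is entirely contained in Theorem \ref{theorem:volcanos} and the structural discussion of stacking, folding and attaching. The only point that requires a line of justification is the index computation $h/n$ for the number of components, but this is the standard statement that the action of the prime $\mathfrak{p}$ on the $\Cl(\OO_K)$-torsor of surface curves partitions them into orbits of size $n = \operatorname{ord}(\mathfrak{p})$.
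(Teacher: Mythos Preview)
Your proposal is correct and follows the same approach as the paper, which simply states that the lemma is ``an immediate consequence'' of the discussion in Section~\ref{sec:p3mod4}. You have merely unpacked that reference by citing Theorem~\ref{theorem:volcanos}(3)(a) and the paragraph following it on the $h/n$ component count, and then made explicit the trivial observation that passing from $\FpGraphtwo$ to $\FpSubGraph$ via identifications and added edges preserves connectedness.
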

\begin{proof}
An immediate consequence of \ref{sec:p3mod4}.
\end{proof}

It is interesting to know that the converse of this lemma is not true: If primes above $(2)$ do not generate the class group, it is still possible for the $\mathbb{F}_p$ subgraph of $\FpBarGraph$ to be connected, thanks to attaching.  

In the range $50,000 < p < 10000$, there are $217$ primes for which $\p$ does not generate the class group.

We have seen the following:
\begin{enumerate}
    \item for $12$ primes  $57119, 59471, 61871, 64439, 70439, 76871, 85199, 88799, 91631, 92399, 92951,$ and $ 96671$
    the spine $\FpSubGraph$ is nonetheless connected.
    \item for $57$ out of those $66$ primes there will be exactly $2$ connected components of $\FpSubGraph$ and those will be at most $6$ apart (with diameter being about $15$). For $29$ of these primes, $51287, 51383, 53639, 54559, 54767, 58511, 59063, 63439, 
    63799, 65831, 66863, 67751, 69191,\\ 70607, 72679, 74759, 76159, 79151, 80783, 82799, 83471, 84559, 
    85847, 86711, 90239, 91823, \\95959, 99079,$ and $ 99719$, these components are exactly $2$ apart.
 
 \end{enumerate}
The diameter is between 14 and 16.  The graphs have between 5400 and 8300 vertices.

These are the distances of non-normalized. The average distance of two random vertices for $2$-isogeny graphs of this size is around 9. This is approximately $0.6$ times the diameter of the graphs. This number grows slowly (for primes $p \approx 500,000$, the average distance of two random vertices is about $0.7$ times the diameter) and we expect it to converge to the diameter, however, we don't know how quickly.

We also computed the average of the mean distances of connected components of $\FpSubGraph \subset \FpGraphtwo$ for these primes. The mean is $4.3395$, with standard deviation $1.1092$, and the maximum is
$7.000$ and the minimum $2.333$, which indicates that the components tend to be close to each other. 

\subsubsection{The number of components}\label{sec:num-comp}

We estimate the number of connected components of $\FpSubGraph$, under the assumption $h(D) \approx \sqrt{D}$. By Theorem \ref{theorem:stacking-folding-attaching}, the number of vertices of $\FpSubGraph$ is approximately half (respectively, one fourth) of the size of $\FpSubGraph$ if $p \equiv 1 \mod 4$ (resp., $p \equiv 3 \mod 8$) and depends on the order of the prime lying above $2$ for $p \equiv 7 \mod 8$.

\begin{tabular}{c|c|c|c}
   & & &    \\
  prime mod 8 & shape of $\FpGraphtwo$   & $\# \FpSubGraph$ & $\approx$ number of components   \\ \hline
   $1 \mod 4$   & edges & $\frac{1}{2} h(-4p)$ & $\frac{1}{4} h(-4p)$  \\
   $3 \mod 8$ & claw & $2 h(-p)$ &$ \frac{1}{4} \cdot 2 h(-p) = \frac{1}{2}h(-p)$ \\ 
   $7 \mod 8$ & volcanoes  & $h(-p)$ &$ \frac{1}{2 \cdot \operatorname{ ord}(\p_2)} \cdot h(-p) << \frac{1}{2} h(-p) $ \\
    & (2 levels, size $\operatorname{ ord}(\p_2)$) & & 
\end{tabular}

\section{Conjugate vertices, distances, and the spine}

We examine several distances of cryptographic interest. In Section \ref{sec:dist-conj-pairs} we study the distance between Galois conjugate pairs of vertices, that is, pairs of $\jpp$-invariants of the form $\jpp$, $\jpp^p$. Our data suggests these vertices are closer to each other than a random pair of vertices in $\FpBarGraphtwo$. In Section \ref{sec:shortest-paths-through-spine} we test how often the shortest path between two conjugate vertices goes through the spine $\FpSubGraph$, or equivalently, contains a $j$-invariant in $\mathbb F_p$. We find conjugate vertices are more likely than a random pair of vertices to be connected by a shortest path through the spine. Finally, we examine the distance between arbitrary vertices and the spine $\FpSubGraph$ in Section \ref{sec:dist-to-Fp}. 

\subsection{Distance between conjugate pairs}
\label{sec:dist-conj-pairs}
Isogeny-based cryptosystems such as cryptographic hash functions and key exchange rely on the difficulty of computing paths (\textit{routing}) in the supersingular graph $\FpBarGraph$. Our experiments with $\ell=2$ show that two random conjugate vertices are ``closer" than two random vertices. 

We tested the distances of conjugate vertices as follows. First for a given prime $p$, we constructed the graph $\FpBarGraphtwo$. Then we computed the distances $\mathrm{dist}(\jpp_1,\jpp_2)$ between all pairs $\jpp_1,\jpp_2 \in \FpBarGraphtwo$. These values were organized into two lists:
\begin{align*}
    C_p &= [\mathrm{dist}(\jpp,\jpp^p) \colon \jpp \in \F_{p^2} \setminus \Fp]
    \\
    A_p &= [\mathrm{dist}(\jpp_1,\jpp_2) \colon \jpp_1,\jpp_2 \in \F_{p^2} \setminus \Fp].
\end{align*}

The distributions $C_p$ and $A_p$ for $p = 19489$ are shown as histograms in Figure~\ref{fig:distances-between-pairs-19489}. We call the pairs from $C_p$ \textit{conjugate} pairs and pairs from $A_p$ \textit{arbitrary} pairs.

\begin{figure}[H]

\centering
\begin{subfigure}{.45\linewidth}
  \includegraphics[width=\textwidth]{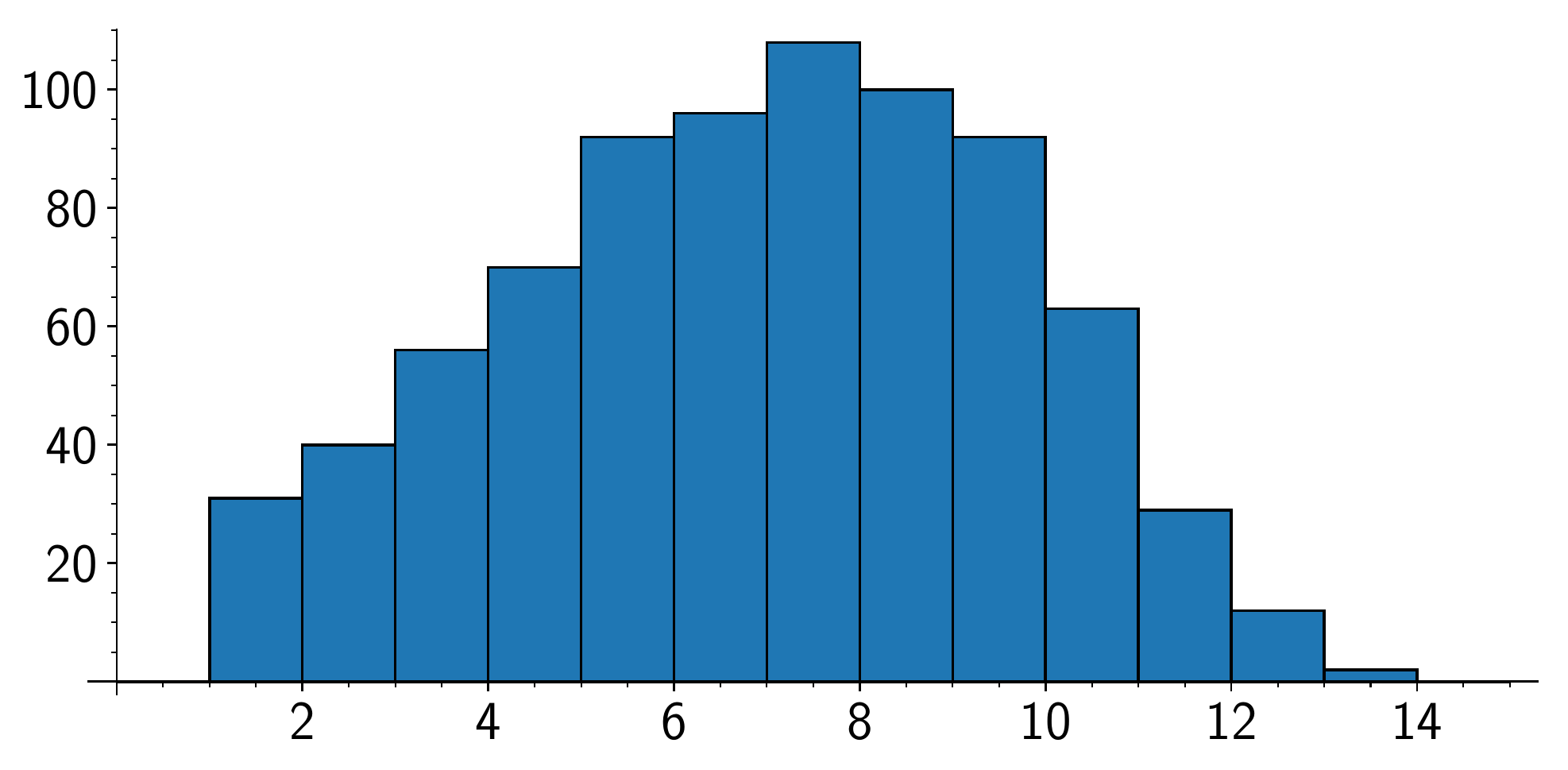}
  \caption{Distances between conjugate pairs.}
\end{subfigure}
\begin{subfigure}{.45\linewidth}
  \includegraphics[width=\textwidth]{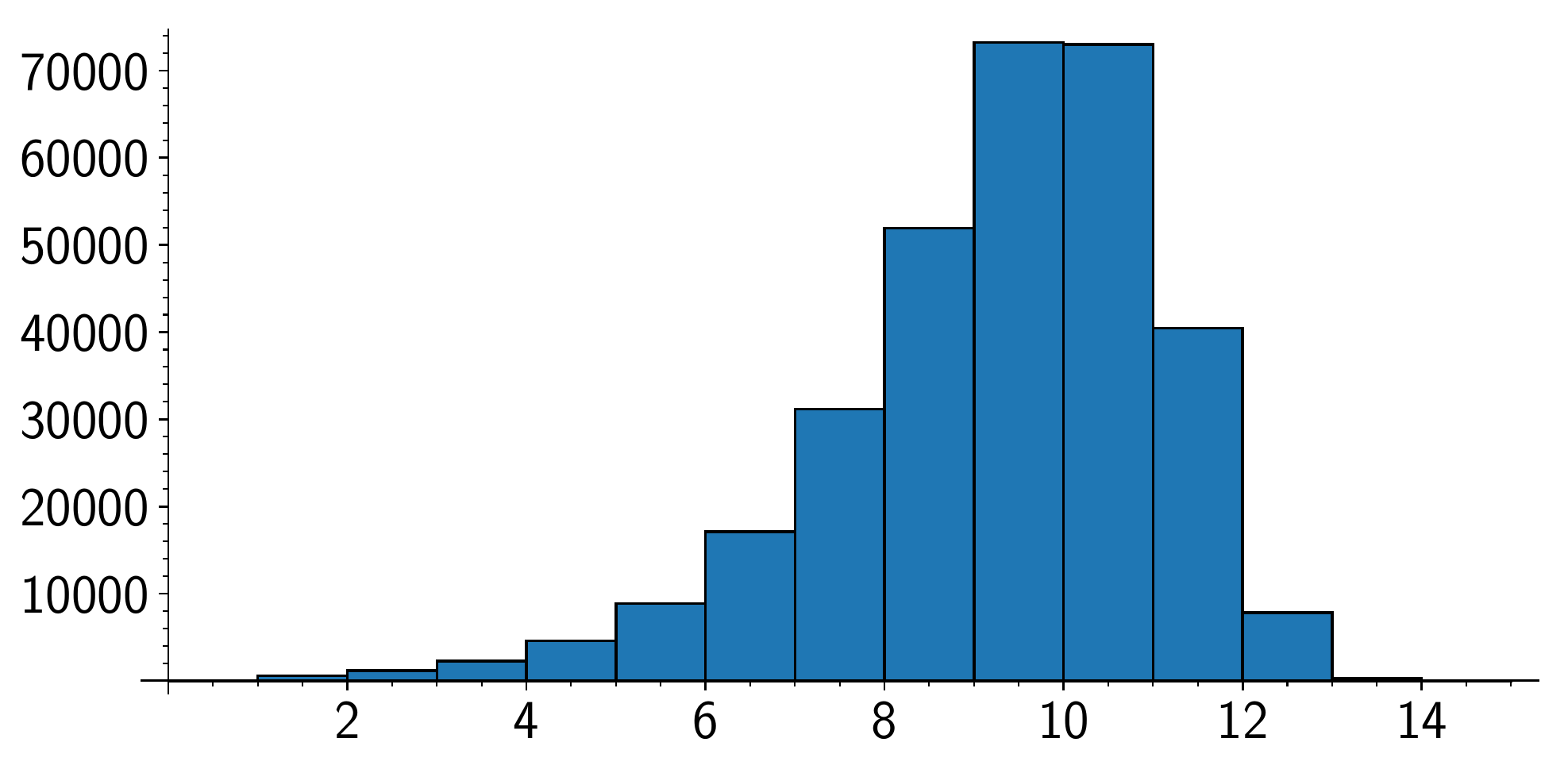}
  \caption{Distances between arbitrary pairs.}
\end{subfigure}
\caption{Distances measured between conjugate pairs and arbitrary pairs of vertices not in $\Fp$ for the prime $p = 19489$.}
\label{fig:distances-between-pairs-19489}
\end{figure}

For a larger prime, it is too costly to iterate over all vertices. Instead, we took a random sample of $1000$ conjugate and arbitrary pairs. The data collected for the prime $p = 1000003$ is shown in Figure~\ref{fig:distances-between-pairs-1000003}.

\begin{figure}[h!]

\centering
\begin{subfigure}{.45\linewidth}
  \includegraphics[width=\textwidth]{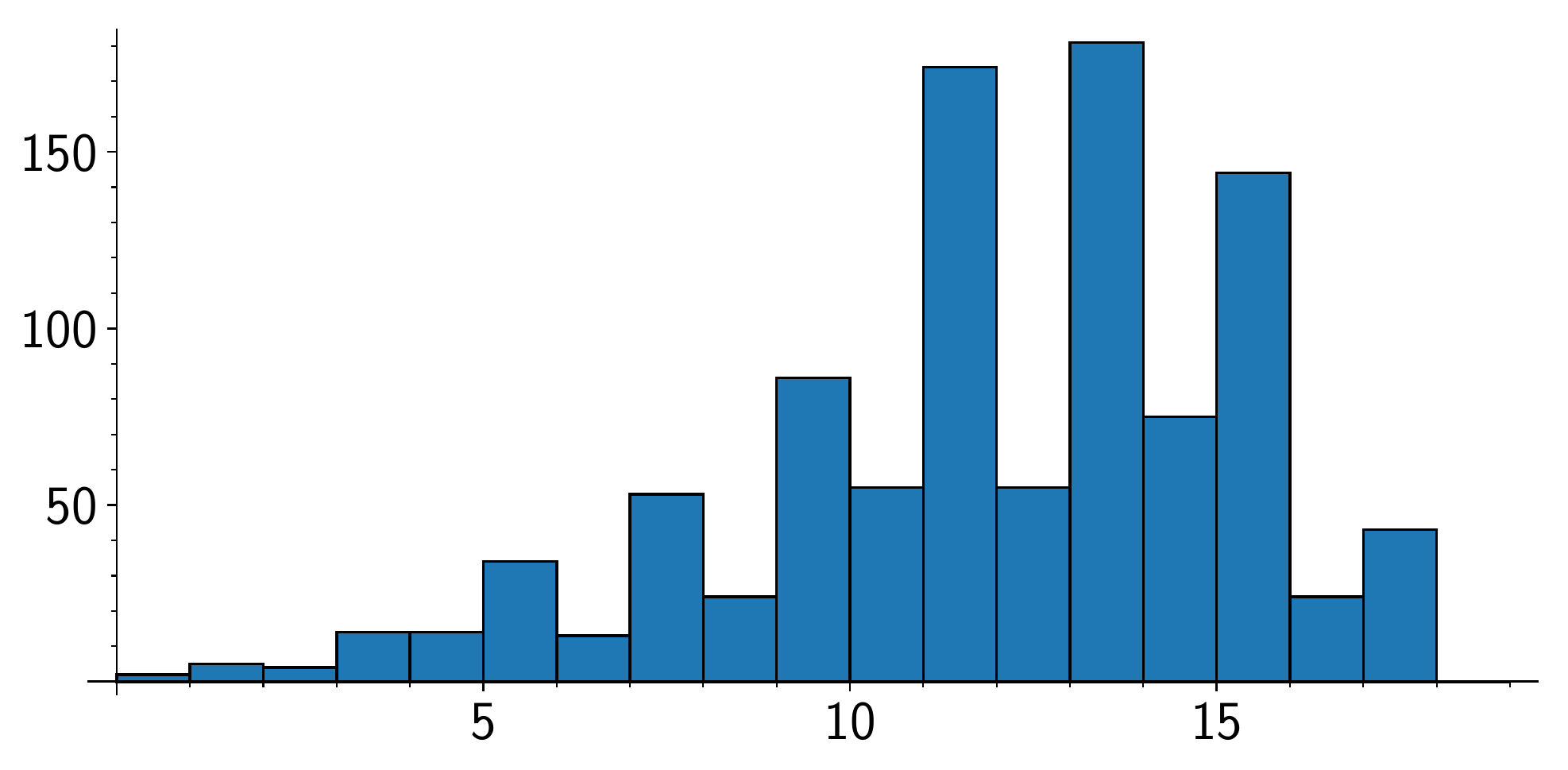}
  \caption{Distances between conjugate pairs.}
\end{subfigure}
\begin{subfigure}{.45\linewidth}
  \includegraphics[width=\textwidth]{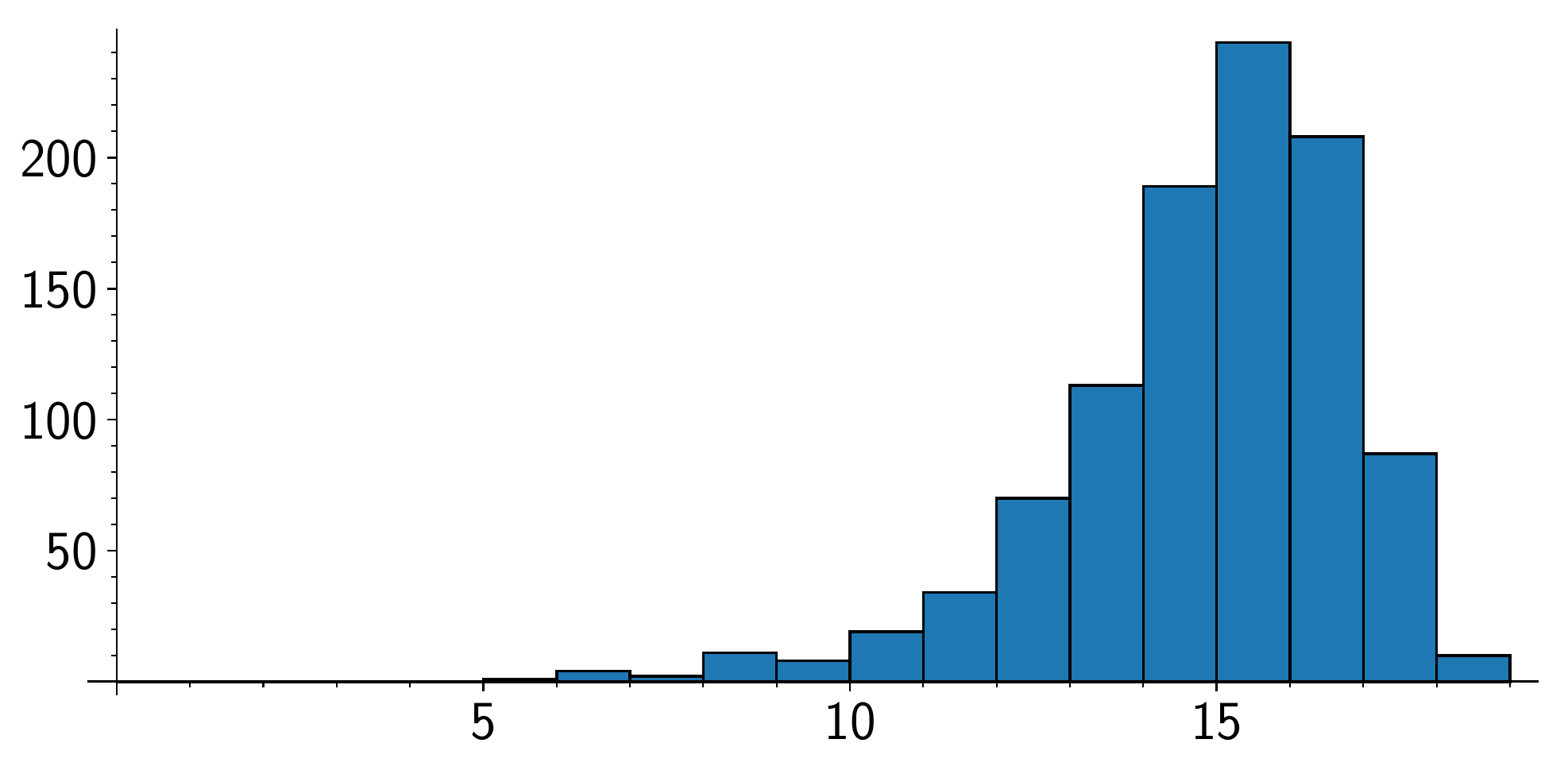}
  \caption{Distances between arbitrary pairs.}
\end{subfigure}
\caption{Distances between $1000$ randomly sampled pairs of arbitrary and conjugate vertices for the prime $p = 1000003$.}
\label{fig:distances-between-pairs-1000003}
\end{figure}

From our data, it seems likely that distances between conjugate vertices have a different distribution than distances between arbitrary vertices. However, more study on a broader sample of primes is needed.

\begin{remark}
    In Figure~\ref{fig:distances-between-pairs-1000003}, we see a clear bias towards paths of odd length (that is, odd number of edges). 
    This is due to the fact that conjugate $j$-invariants often admit a shortest path that is a mirror path (Definition~\ref{def:mirror_path}). These paths do not usually go through the spine $\FpSubGraph$, so they have an even number of vertices and an odd number of edges. This topic is studied further in Section~\ref{sec:shortest-paths-through-spine}.
\end{remark}

\subsection{How often do shortest paths go through the $\Fp$-spine}
\label{sec:shortest-paths-through-spine}

It was shown in \cite{DelGal01} that if one navigates to the spine $\FpSubGraph$, one obtains a subexponential attack on the path finding problem. This attack, however, uses $L$-isogenies, where $L$ is a set of small primes. We study the situation when one only uses $L =\{ 2\}$. When $\jpp'= \jpp^p$, any path from $\jpp$ to the spine $\FpSubGraph$ can then be \textit{mirrored} to obtain a path of equal length from $\jpp^p$ to the same point of the spine, and hence a path between $\jpp$ and $\jpp^p$ passing through the spine. This notion motivates the following definition:

\begin{definition}
A pair of vertices are \textbf{opposite} if there exists a shortest path between them that passes through the $\Fp$ spine.
\end{definition}

\subsubsection{Experimental methods} 

We tested how often a shortest path between two conjugate vertices went through the spine $\FpSubGraph$.
Shortest paths are not necessarily unique, so it is not enough to compute a shortest path and check whether passes through the spine.
We used the built-in function of Sage (\cite{sage}) to perform our computations. For efficiency, we did not compute all the shortest paths. Instead, to verify whether a pair $\mathbcal{j}_1,\mathbcal{j}_2$ is opposite, we run over all vertices in $\mathbf{j}\in\Fp$ and check whether there is a $\mathbf{j}$ such that
\[ dist(\mathbcal{j}_1,\mathbcal{j}_2) = dist(\mathbcal{j}_1,\mathbf{j}) + dist(\mathbf{j},\mathbcal{j}_2).\]

For smaller primes ($<5000$) we computed the proportions for all pairs of vertices in $\Fptwo\backslash\Fp$. For larger primes, we randomly selected $1000$ pairs of points $\mathbcal{j}_1,\mathbcal{j}_2$ in $\Fptwo\backslash\Fp$ and checked whether each of the pairs $(\mathbcal{j}_1,\mathbcal{j}_2), (\mathbcal{j}_1, \mathbcal{j}_1^p)$ were opposite.

\subsubsection{Conjugate pairs vs arbitrary pairs}

\begin{figure}[h!]
\begin{subfigure}{.45\textwidth}
  \centering
  \includegraphics[width=\linewidth]{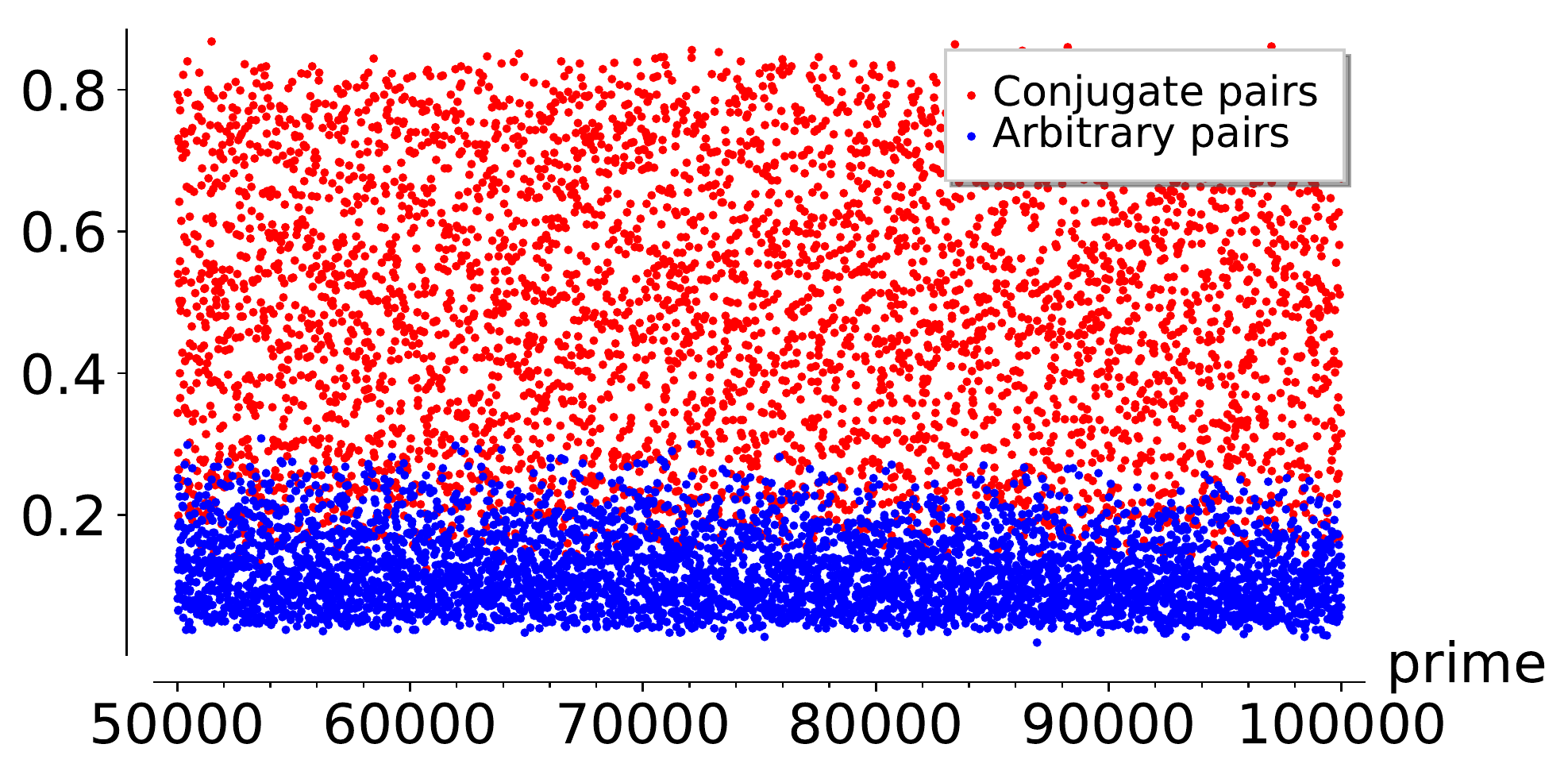}
\caption{Proportions of opposite pairs}
\end{subfigure}
\begin{subfigure}{.45\textwidth}
  \centering
  \includegraphics[width=\linewidth]{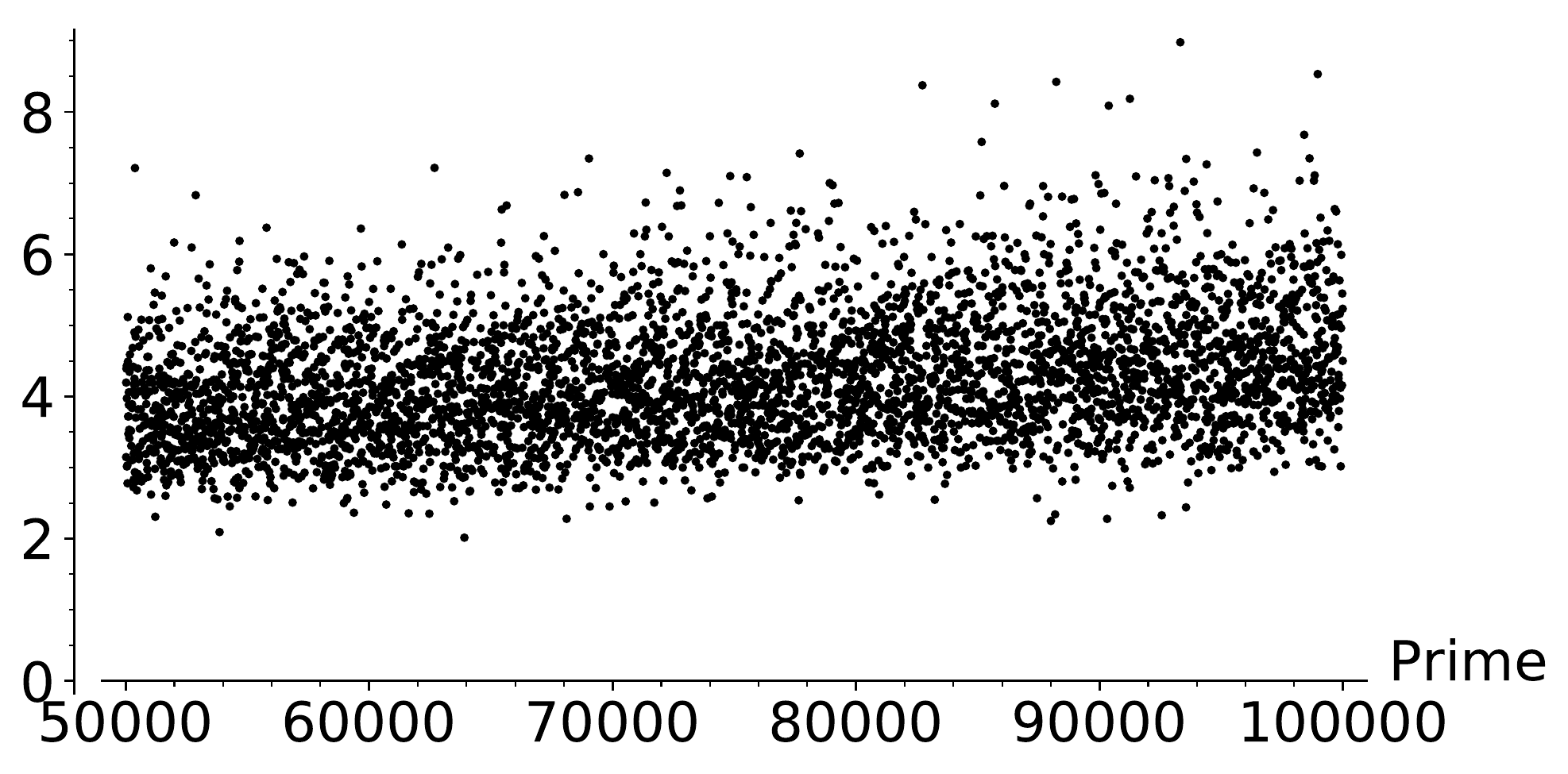}
\caption{Ratio of opposite conjugate pairs vs opposite arbitrary pairs}
\end{subfigure}
\caption{Data for random sample of 1000 pairs of conjugate and arbitrary pairs.}
\label{fig:conj_arb}
\end{figure}

\begin{figure}[h!]
\begin{subfigure}{.45\textwidth}
  \centering
  \includegraphics[width=\linewidth]{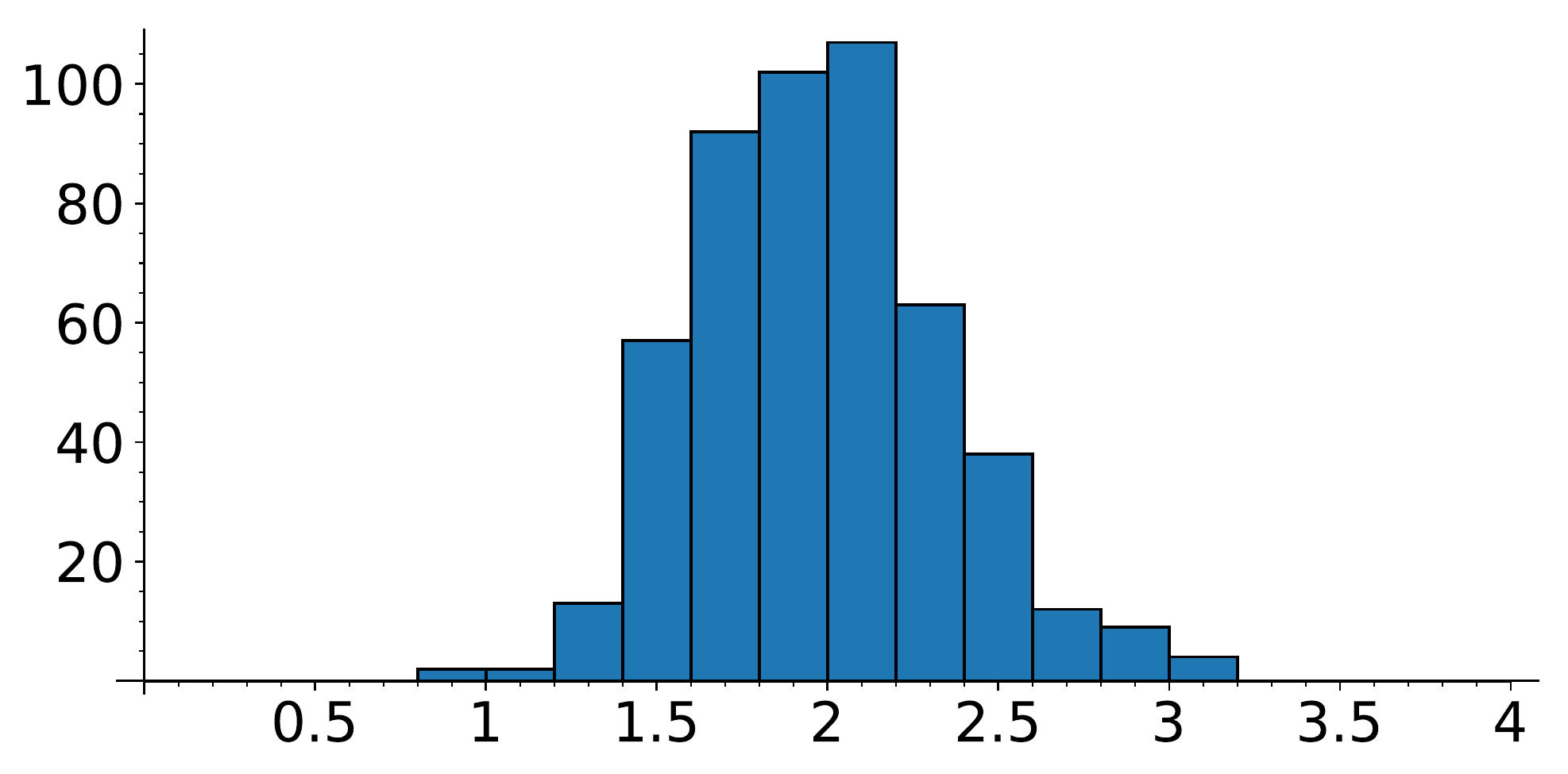}
  \caption{Primes from 1000 to 5000, average is $1.98$}
\end{subfigure}
\begin{subfigure}{.45\textwidth}
  \centering
  \includegraphics[width=\linewidth]{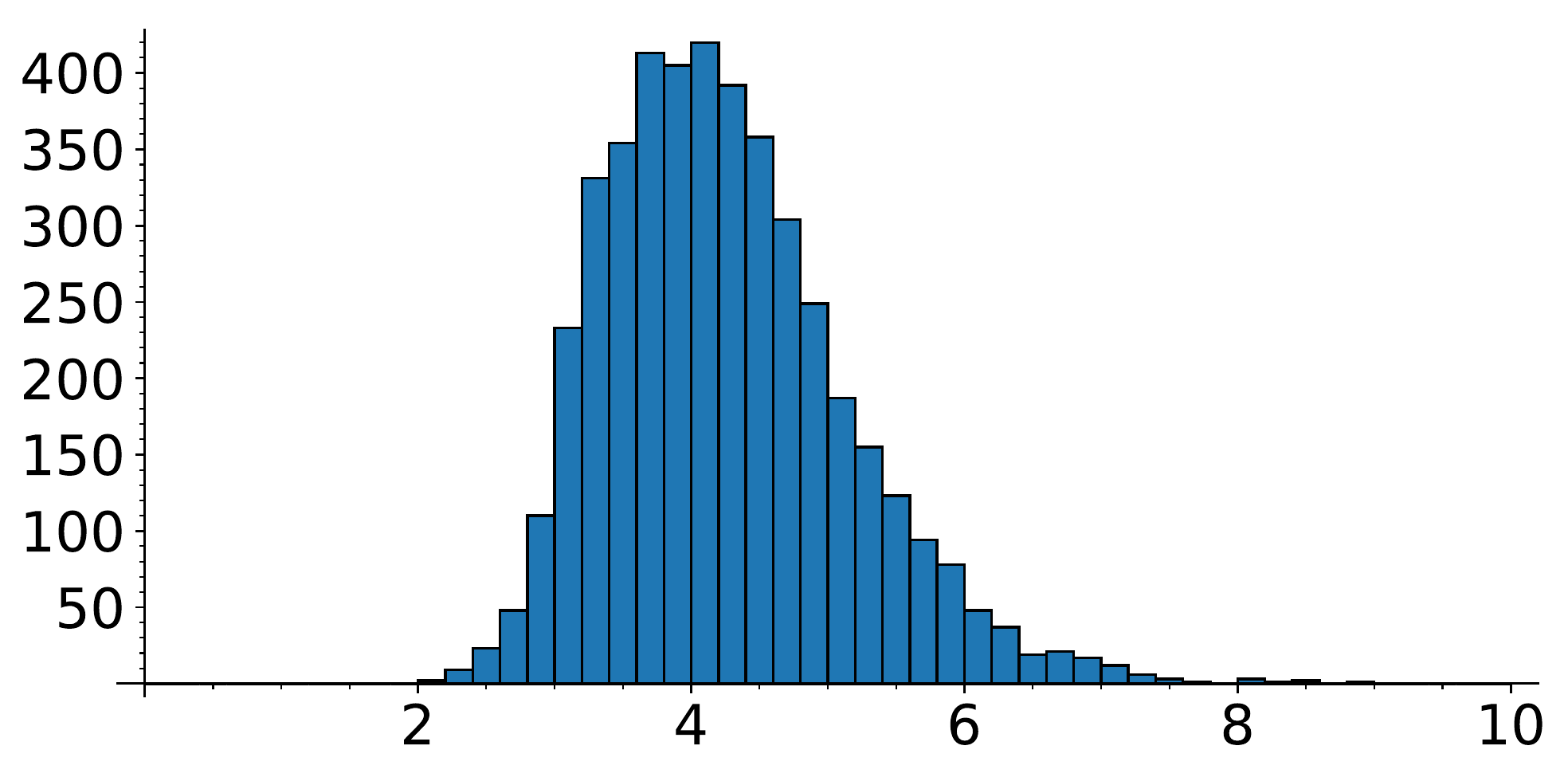}
  \caption{Primes from 50000 to 100000, random sampling of pairs, average is $ 4.25$.}
\end{subfigure}
\caption{ Histogram of primes with proportion of opposite conjugate pairs divided by the proportion of opposite arbitrary pairs as in \eqref{formula:proportion}.}
\label{fig:conj_arb_hist}
\end{figure}
Our data suggests that conjugate vertices are more likely to be opposite than arbitrary vertices. For a random sampling of pairs over primes between $50000$ and $100000$, we observe that 
\begin{align} \label{formula:proportion}
\text{average }\Bigg(\frac{\#\text{opposite conjugate pairs}}{\#\text{opposite arbitrary pairs}}\Bigg)\approx 4.25    
\end{align}

The ratio seems to increase with the size of the prime, as seen in Figures ~\ref{fig:conj_arb} and ~\ref{fig:conj_arb_hist}. 
This leads to the following observation: Due to the mirror involution, to build the graph $\FpBarGraph$, one can start with the spine $\FpSubGraph$ and keep adding edges along with their mirror edges. This might suggest that the spine is central to the graph. However, the shortest paths between arbitrary pairs of vertices are less likely to pass through the spine, contradicting that perspective.

\subsubsection{Proportions varying over different residue classes}

We observe that the proportion of pairs of opposite vertices varies based on the residue class of $p$. In this section, we consider arbitrary pairs of vertices. From the data, as shown in Figure~\ref{fig:opp_mod15}, the proportion is higher for primes $p\equiv 2\mod{3}$ compared to $p\equiv 1\mod{3}$ and higher for primes $p\equiv \pm2\mod{5}$ compared to $p\equiv \pm1\mod{5}$.

\begin{figure}[H]
\begin{subfigure}{.45\textwidth}
  \centering
  \includegraphics[width=\linewidth]{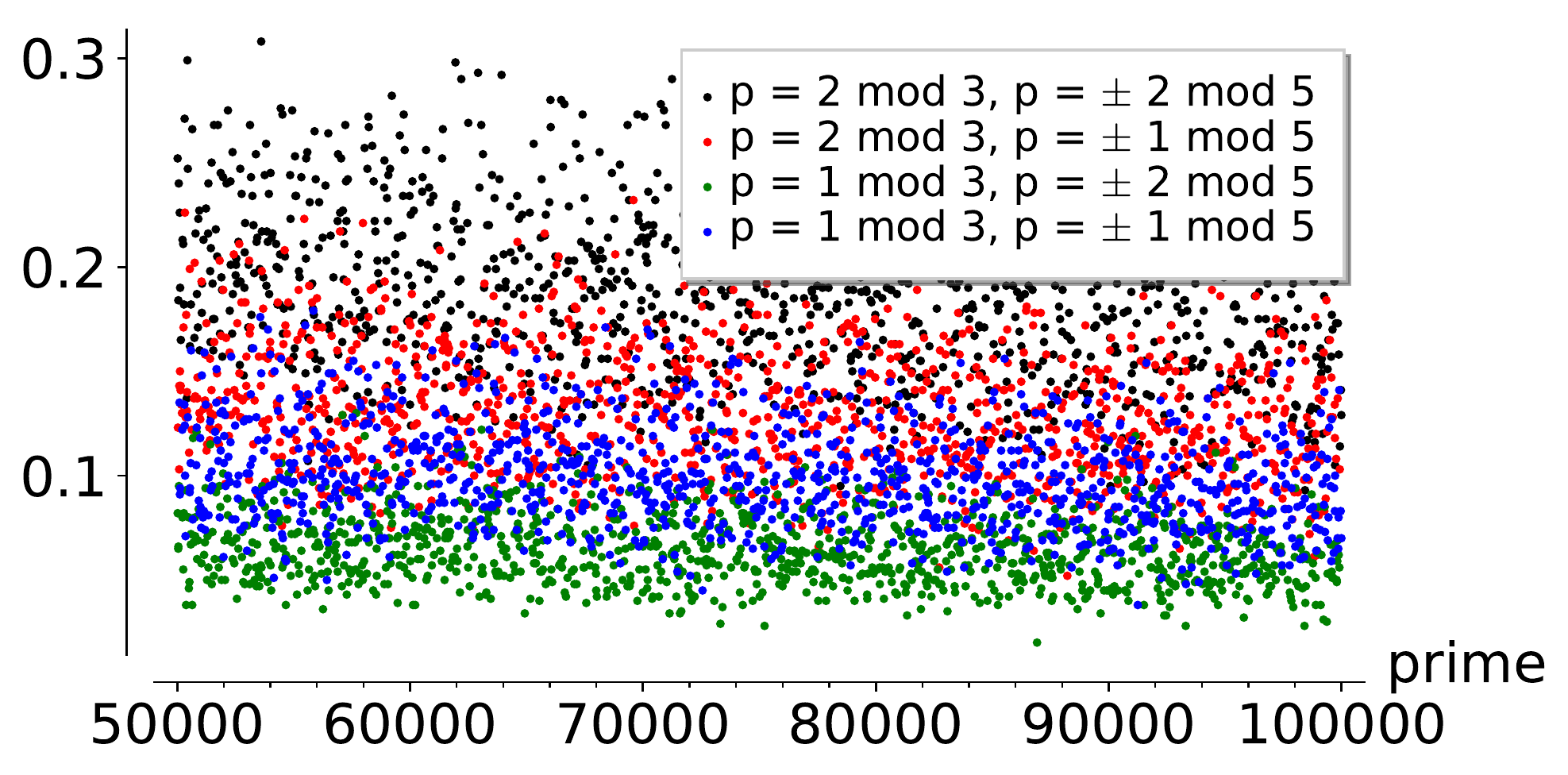}
  \caption{Proportion $\mod{15}$}
  \label{fig:opp_mod15}
\end{subfigure}
\begin{subfigure}{.45\textwidth}
  \centering
  \includegraphics[width=\linewidth]{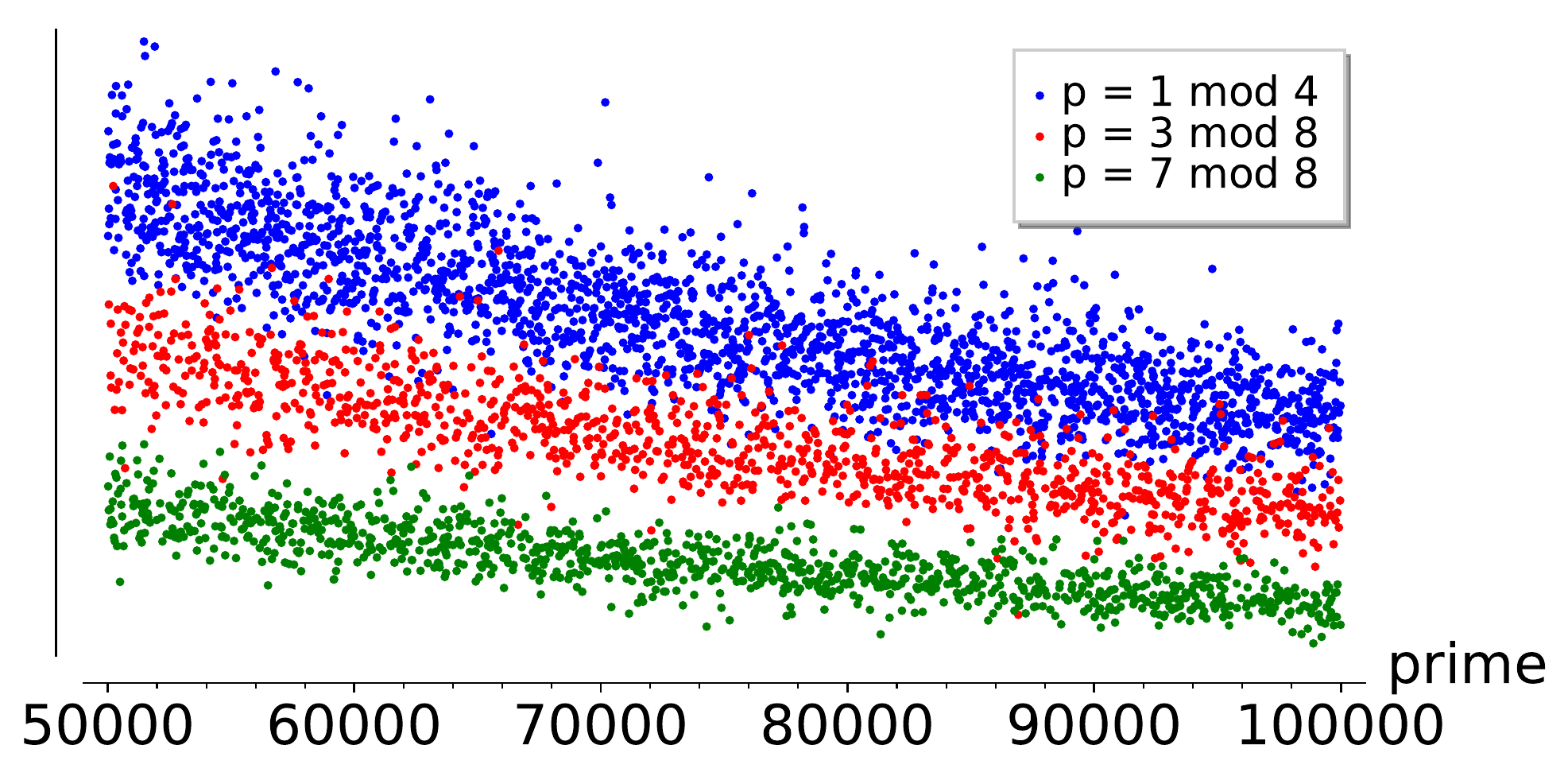}
  \caption{Normalized proportion $\mod{8}$}
  \label{fig:opp_mod8}
\end{subfigure}
\caption{Proportion of opposite pairs out of a random sample of $1000$ pairs.}
\end{figure}
 Based on our results, we suggest that the size and connectedness of the $\Fp$ spine could be key factors affecting the proportion of opposite pairs.

\begin{enumerate}
    \item Size of $\Fp$ spine: when the number of $\Fp$ points is higher, pairs are more likely to have shortest paths through these points. 
    \begin{itemize}
        \item To consider this effect, we study each proportion divided by the number of $\Fp$ points for the prime $p$. After normalizing the proportions, we no longer see clear differences when considering residue classes $\mod{3}$ and $\mod{5}$. This suggests that the underlying cause of the difference was the size of the $\Fp$ spine. 
        \item However, the normalized proportions as shown in Figure~\ref{fig:opp_mod8} appear to fall into three classes $p\equiv1\mod{4}$, $p\equiv3\mod{8}$ and $p\equiv7\mod{8}$. One possible cause for this is the connectedness of the $\Fp$ spine.
    \end{itemize}
    
    \item Connectedness of $\Fp$ spine: when the $\Fp$ spine is less connected to itself, pairs are more likely to have shortest paths through $\FpSubGraph$.
    \begin{itemize}
        \item From the table in Section \ref{sec:num-comp}, the spine is the least connected when $p\equiv1\mod{4}$, and can be highly connected when $p\equiv7\mod{8}$. This could explain the difference in proportions when normalized by the size of $\FpSubGraph$.
        \item For example, we consider the cases $p_1=19991$ ($p_1\equiv 7\mod{8}$, $\FpSubGraph$ is connected, $|\FpSubGraph| = 199$) and $p_2=19993$ ($p\equiv 1\mod{4}$, $\FpSubGraph$ is maximally disconnected, $|\FpSubGraph| = 30$). We would expect $199/30 > 6$ times more opposite pairs in the $p_1$ case. However, for 1000 random pairs, 266 pairs were opposite for $p_1$ compared to 112 pairs for $p_2$.
    \end{itemize}
    To further study whether differences occurring in the normalized proportion $\mod{8}$ were due to the connectedness of the $\Fp$ spine or other structures of $\FpBarGraphtwo$, we took a random subgraph of the same size as $\FpSubGraph$ and obtained the proportion of pairs with a shortest path passing through the random subgraph. We took the average of these results over $10$ random subgraphs for each prime between $1000$ and $5000$.
    
    \begin{figure}[h!]
    \begin{subfigure}{.45\textwidth}
    \centering
    \includegraphics[width=\linewidth]{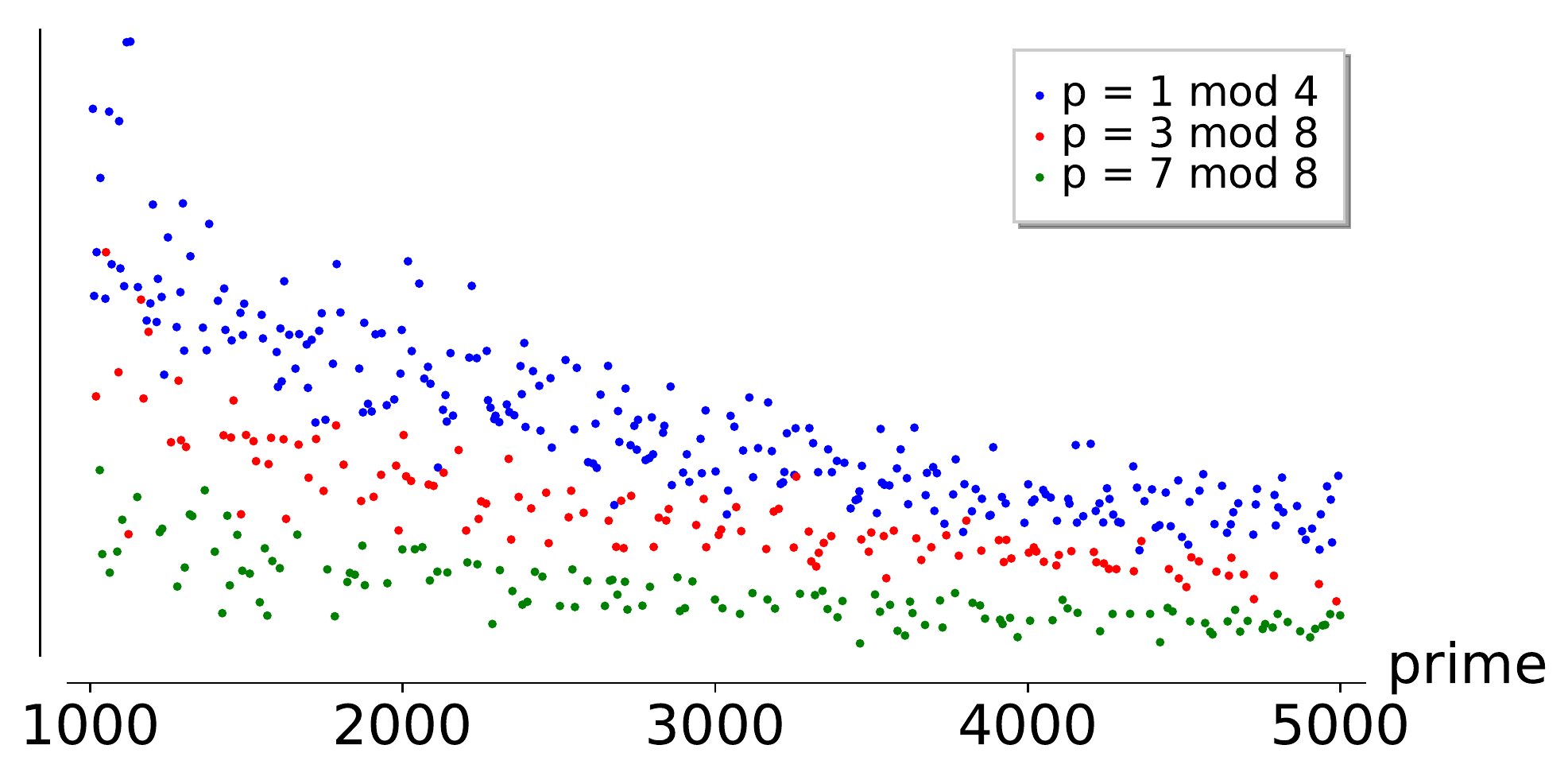}
    \caption{$\Fp$ spine}
    \end{subfigure}
    \begin{subfigure}{.45\textwidth}
    \centering
    \includegraphics[width=\linewidth]{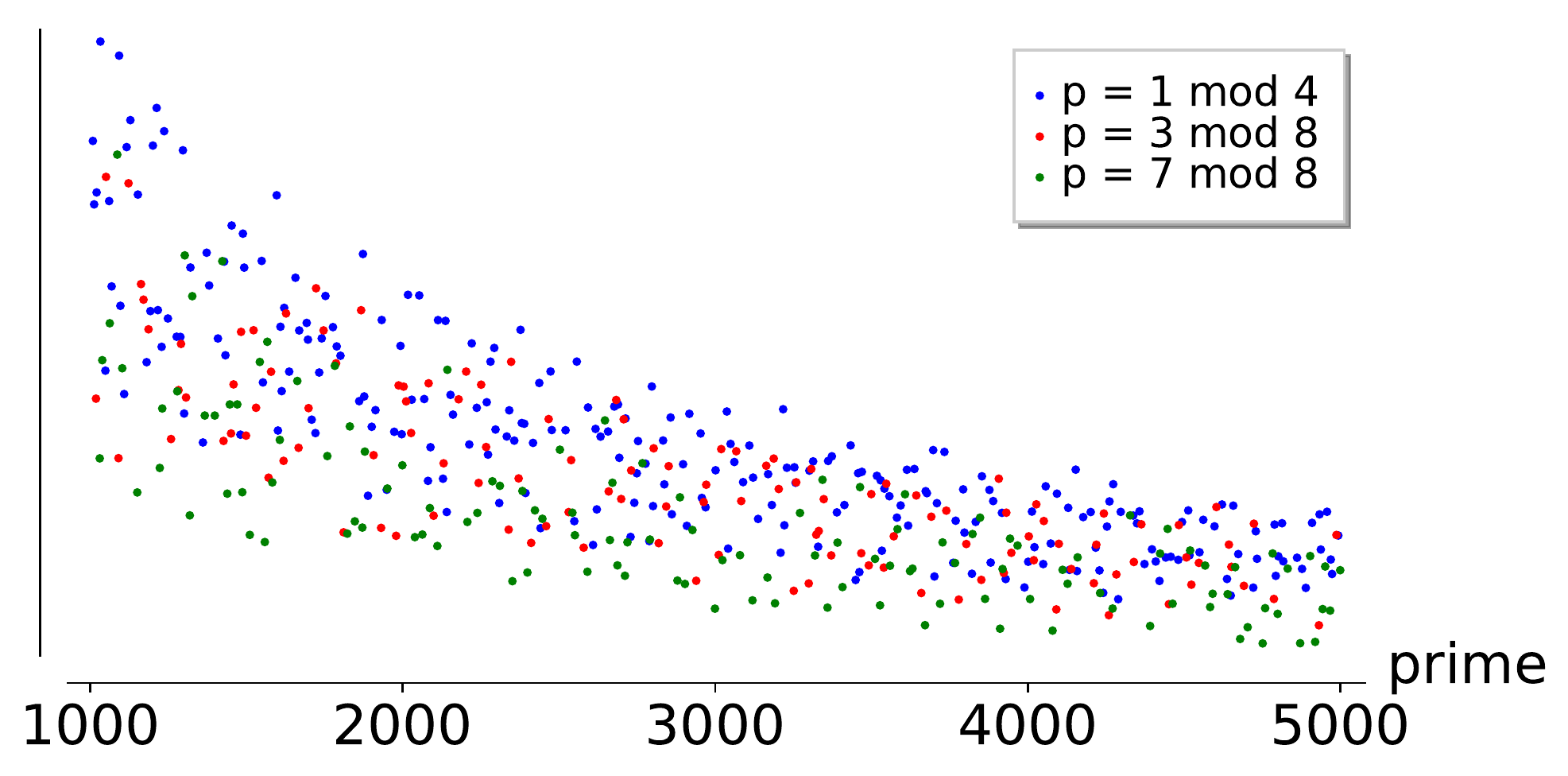}
    \caption{Random subgraph}
    \end{subfigure}
    \caption{Normalized proportion of pairs with a shortest path through the subgraph specified.}
    \label{fig:rand_vs_opp}
    \end{figure}
    
    From the data in Figure~\ref{fig:rand_vs_opp}, there is less distinction $\mod{8}$ for random subgraphs. This suggests that the connectedness of $\FpSubGraph$ is the dominant factor affecting the normalized proportion.
\end{enumerate}

\subsection{Distance to spine}\label{sec:dist-to-Fp}

In this section, we compare the {\it distance from a random vertex to the spine}, with the {\it distance from a random vertex to a random subgraph of the same size as the spine}. We observe that if the spine is connected, then the distance to the spine seems greater than the distance to a random subgraph. This agrees with the intuition that a small connected subgraph (remember that the spine has size $O(\sqrt{p})$) will be further from most vertices than a random subgraph, which will have many connected components uniformly distributed throughout the graph.

We tested the distances as follows. For a value of $p$, we constructed the graph $\FpBarGraphtwo$, the spine $S_0 := \FpSubGraph$, and chose several random subgraphs $S_1,\dots,S_n$. We define the distance between a vertex $j$ and a subgraph $S_i$ to be
\[
    \mathrm{dist}(j,S_i) = \min\{\mathrm{dist}(j,j'): j' \in S_i\}.
\]
We computed lists $d_i = [\mathrm{dist}(j,S_i) \colon j \in \FpBarGraphtwo]$ in order to measure how dispersed $S_i$ is in $\FpBarGraphtwo$.

Distances were computed for two primes, $p = 19991$ and $p = 19993$. Histograms of the distributions of the $d_i$ are given in Figure~\ref{fig:distances-to-spine-vs-random}. For $p = 19991$, the subgraph $\FpSubGraph$ is connected, whereas for $p = 19993$, $\FpSubGraph$ is maximally disconnected because $19993 \equiv 1 \mod{12}$ (see Lemma~\ref{lem:spine-1-mod-12}).

\begin{figure}[H]

\centering
\begin{subfigure}{.45\linewidth}
  \includegraphics[width=\textwidth]{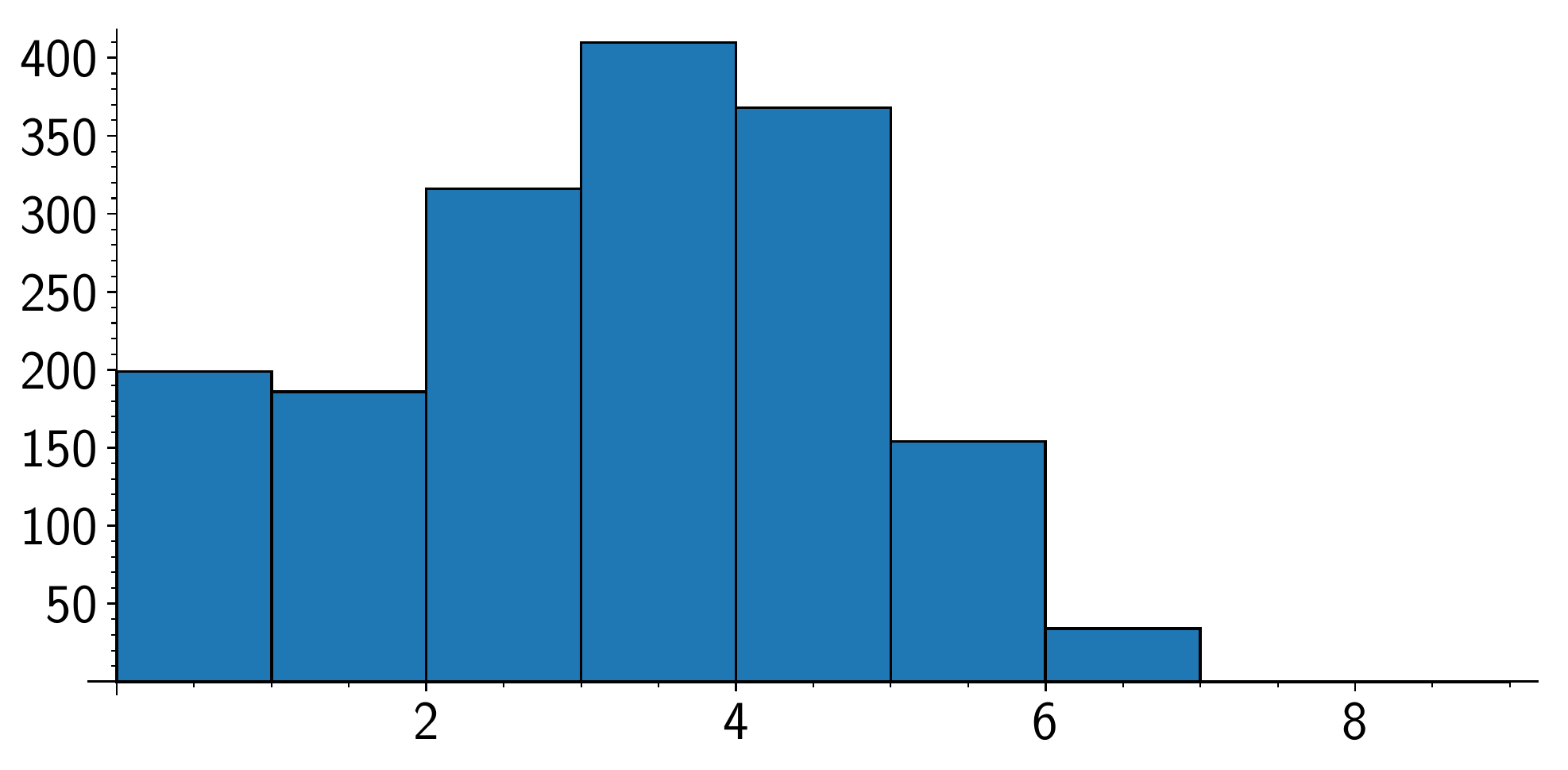}
  \caption{Distances to $\FpSubGraph$ for $p = 19991$.}
\end{subfigure}
\begin{subfigure}{.45\linewidth}
  \includegraphics[width=\textwidth]{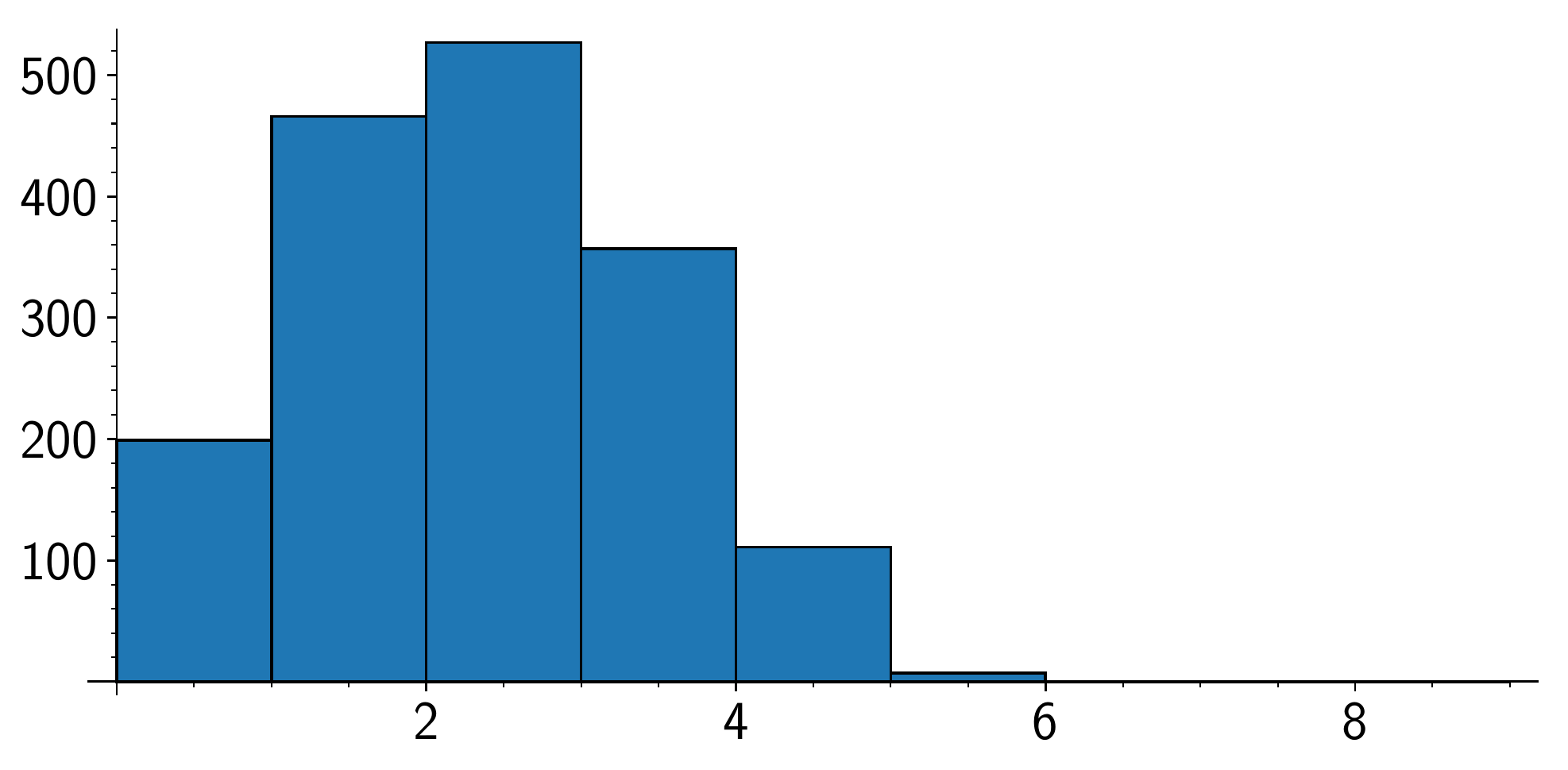}
  \caption{Distances to $R$ for $p = 19991$.}
\end{subfigure}
\begin{subfigure}{.45\linewidth}
  \includegraphics[width=\textwidth]{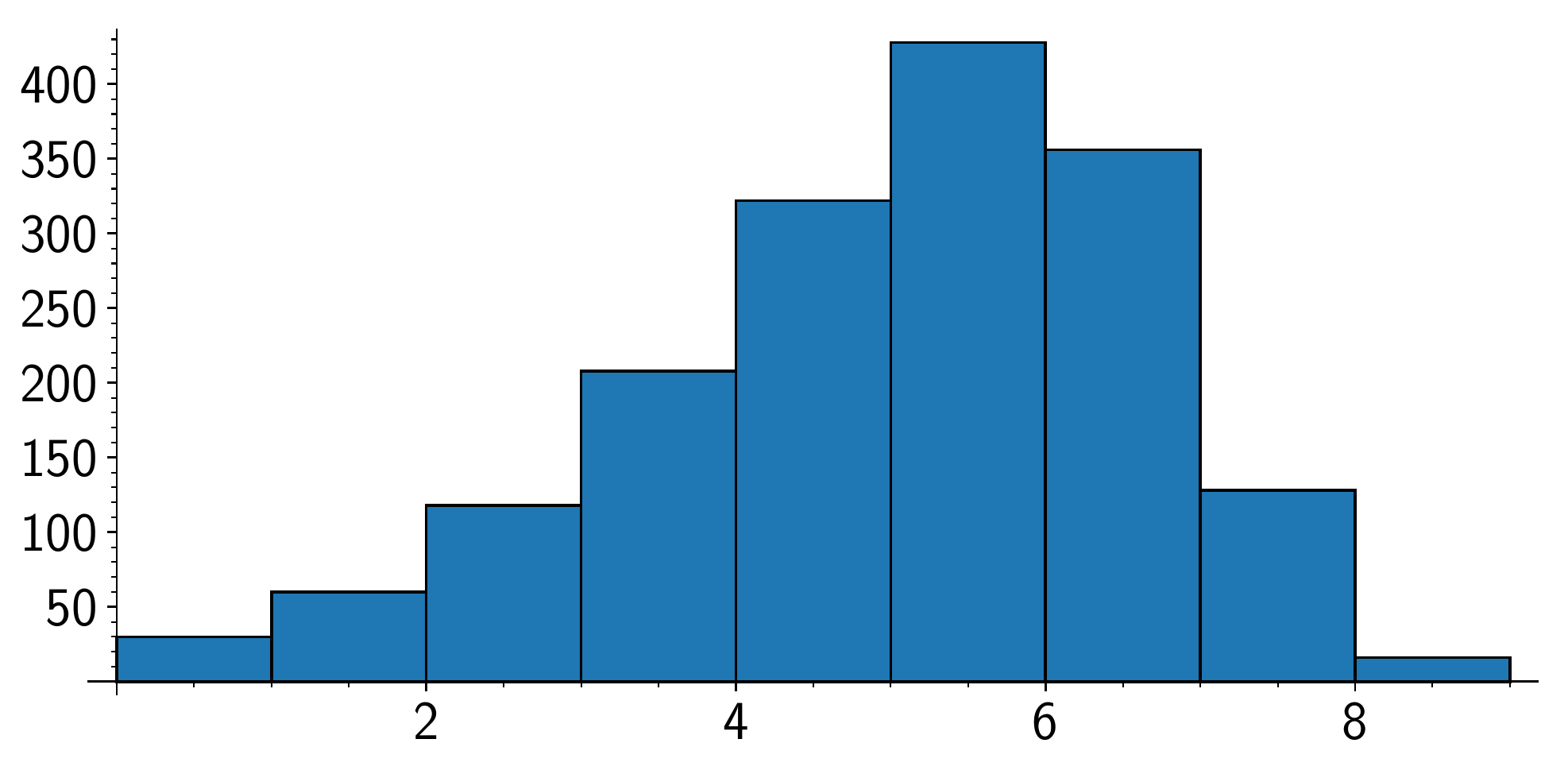}
  \caption{Distances to $\FpSubGraph$ for $p = 19993$.}
\end{subfigure}
\begin{subfigure}{.45\linewidth}
  \includegraphics[width=\textwidth]{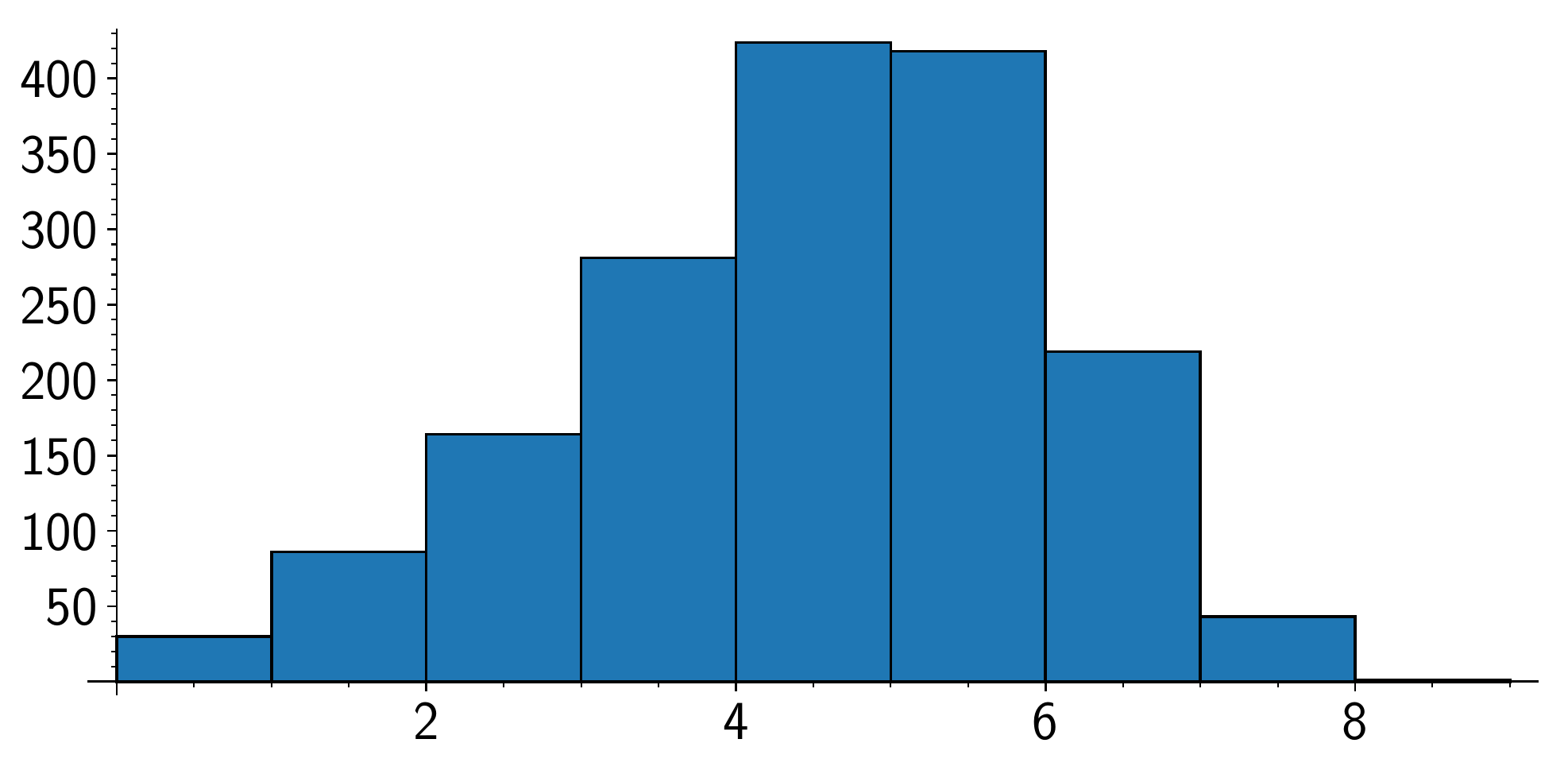}
  \caption{Distances to $R$ for $p = 19993$.}
\end{subfigure}
\caption{Distances to the spine $\FpSubGraph$ compared to distances to a random subgraph of the same size. The subgraph $\FpSubGraph$ is connected  for $p=19991$ and a union of disconnected edges for $p=19993$.}
\label{fig:distances-to-spine-vs-random}
\end{figure}

The significant difference between the two primes shown in Figure~\ref{fig:distances-to-spine-vs-random} can also be explained by the number of vertices in $\FpSubGraph$. Since $\FpBarGraphtwo$ is a $3$-regular graph, for a random vertex $j$, there are at most $3 \cdot 2^{d-1}$ vertices of distance $d$ away from $j$ (and this limit is achieved if there are no collisions on the paths leaving $j$). If $\FpBarGraphtwo$ has $N$ vertices and $H$ is a random subgraph with $M$ vertices, then the expected distance to $H$ from a random vertex should be $\approx \log_2(N/M)$.

For $p=19991$, $|\FpSubGraph| = 199$, so we expect the average distance to $\FpSubGraph$ to be $3.06$. For $p=19993$, $|\FpSubGraph| = 30$, so we expect the average distance to $\FpSubGraph$ to be $5.80$.

\subsubsection{Comparison across primes $p$}
In order to compare the distances to $\FpSubGraph$ across different primes and account for the expected average distance based on the size of $\FpSubGraph$ we consider normalized distances as follows:
\[d_p = (\text{average distance to } \FpSubGraph \text{ for prime }p) / \log_2(|\FpBarGraphtwo|/|\FpSubGraph|)\]
Recall that $\log_2(|\FpBarGraphtwo|/|\FpSubGraph|)$ is the expected distance to the spine from a random vertex.
We observed that the average distances were lower than the expected distance based on the connectedness of $\FpSubGraph$. There are also clear differences in the distributions of $d_p$ when considering residue classes of $p$ modulo $8$. This is shown in Figure \ref{fig:distance_mod_8}. In particular, the data $\mod{8}$ matches our findings on the proportion of opposite pairs, see Figure~\ref{fig:opp_mod8}.

However, the different behaviour of $d_p$ for the different congruence classes $\mod 8$ can be explained by the size of the spine. If the size of the spine $|\FpSubGraph|$ is large, we will need fewer steps to reach the spine from a random vertex $v$. Hence, when counting the paths of length $2^d$ from $v$, we will encounter less backtracking and the estimate is more precise. Looking at Figure \ref{fig:spine_mod_8}, we see that for $p \equiv 7 \mod 8$, the size of the spine is the largest, and for $p \equiv 1 \mod 4$, the size of the spine is the smallest. 

We also tested this within a fixed congruence class:
for primes with $p \equiv 7 \mod 8$ and $15,000< p < 20,000$, the mean distance to the spine is $4.040$ with standard deviation $0.413$ if $|\FpSubGraph| < 100$ and mean $3.007$ with standard deviation $ 0.335$ if $|\FpSubGraph| > 100$.

\begin{figure}[H]
    \begin{subfigure}{.45\textwidth}
    \centering
    \includegraphics[width=\linewidth]{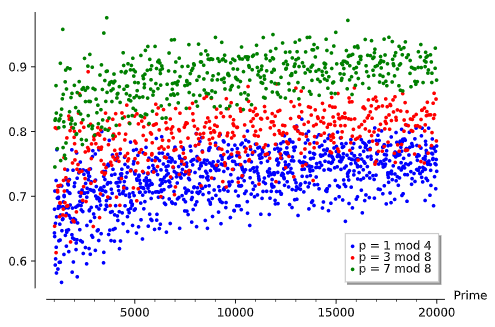}
    \caption{Comparison varying $p \mod{8}$}
    \label{fig:distance_mod_8}
    \end{subfigure}
    \begin{subfigure}{.45\textwidth}
    \centering
    \includegraphics[width=\linewidth]{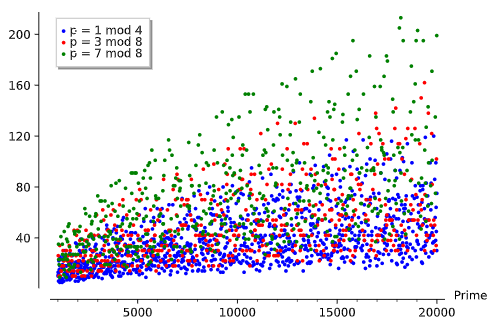}
    \caption{Size of spine $\FpSubGraph$ varying $p \mod 8$}
    \label{fig:spine_mod_8}
    \end{subfigure}
    \caption{Normalized average distances to the $\Fp$ spine versus the size of the spine.}
    \label{fig:dist_across_p}
\end{figure}

\section{When are conjugate $j$-invariants $\ell$-isogenous?}

\subsection{Motivation}
In Section~\ref{sec:shortest-paths-through-spine} we studied paths between conjugate $j$-invariants in $\FpGraphtwo$ that go through the spine $\FpSubGraph$. On the other hand, if $\jpp$ and $\jpp^p$ are $2$-isogenous, then the shortest path between them has length one and does not go through $\FpSubGraph$. This leads us to the natural question:

\begin{center}
\textbf{Question 1:} \textit{How often are conjugate $\mathbb{F}_{p^2}\setminus\mathbb{F}_p$ $j$-invariants $\ell$-isogenous, for $\ell = 2,3$?}
 \end{center}

\subsection{Methods}

For varying primes $p$, we want to collect data on how often conjugate $\mathbb{F}_{p^2}\setminus\mathbb{F}_p$ $j$-invariants are $\ell$-isogenous, for $\ell = 2,3$. We used two main approaches for this, and here we compare their efficiency. The first one corresponds to a modified Breadth-First Search (BFS) algorithm. The second one is based on the supersingular $j$-invariant polynomial and the modular polynomial.

\vspace{0.2in}

\textbf{Breadth-First Search.} Breadth-First Search (BFS) is an algorithm for exploring a graph starting at a fixed vertex.  From the starting vertex, the algorithm explores all of the neighbor nodes at a given depth before moving on to the nodes at the next depth. 

In our experiments we generated the graph $\FpBarGraph$ as follows. First we find a supersingular $j$-invariant $j_0$ over $\Fp$ using the CM method described in \cite{Broker}. It works by finding the smallest prime $q$ such that $p$ is inert in $\QQ(\sqrt{-q})$ and then finding a root of the Hilbert class polynomial for $\QQ(\sqrt{-q})$ over $\Fp$. In practice this step is very efficient for small $p$.

Next we generate the graph using BFS. BFS takes $O(|V| + |E|)$ steps when performed on a graph with $|V|$ vertices and $|E|$ edges. Because $\FpBarGraph$ is an $\ell$-regular graph with $\approx p/12$ vertices, this will run in $O(p)$ steps. Each step requires finding the neighbors of a particular vertex $j$ in the graph. This is done by finding the roots (with multiplicities) of $\Phi_\ell(j,x) \in \Fptwo[x]$. So the total time complexity for this step is $\tilde{O}(p)$.

 We modified this algorithm to collect the $\ell$-isogenous conjugate pairs of $\mathbb{F}_{p^2}\setminus\mathbb{F}_p$ $j$-invariants as we explore the graph, so the time complexity of the algorithm is essentially the same as the complexity for exploring the graph. 

\vspace{0.2in}

\textbf{Supersingular $j$-invariant polynomial with $\Phi_\ell$.}
Another method to calculate the proportion of  $\ell$-isogenous conjugate pairs uses the supersingular $j$-invariant polynomial. Sage has a built-in command to computes this polynomial. It uses the fact that an elliptic curve over $\Fpbar$ given by the Legendre equation $y^2=x(x-1)(x-\lambda)$ is supersingular if and only if $\lambda$ is a root of the polynomial 
\begin{align}\label{eq:Hpoly}
    H(t)=\sum_{i=0}^{m}{{m}\choose{i}}^2t^i
\end{align}
for $m=\frac{p-1}{2}$ (see \cite[Section V.4]{AEC}). 
If $j$ is the $j$-invariant of an elliptic curve as above, then the polynomial $$F(s,t)=st^2(t-1)^2-2^8(t^2-t+1)^3$$ vanishes at $(j,\lambda)$. Sage then computes the resultant $R(s)$ of $H(t)$ and $F(s,t)$ with respect to $t$, factors it over $\Fp$ and defines the supersingular $j$-invariant polynomial as the product of these factors counted only once. If $(s-1728)$ or $s$ are factors, they are excluded. 

Once we obtain this polynomial, we proceed as follows:
\begin{enumerate}
    \item Compute the roots of the supersingular $j$-invariant polynomial over $\mathbb{F}_{p^2}$.
    \item Sort the roots over $\mathbb{F}_{p^2}\setminus\mathbb{F}_p$ by conjugate pairs and count the number of such pairs.
    \item Use the $\ell^{th}$ modular polynomial $\Phi_{\ell}$ to determine which conjugate pairs are $\ell$-isogenous.  
\end{enumerate}
For a prime $p$, the number of supersingular $j$-invariants over $\mathbb{F}_{p^2}$ is $\lfloor\frac{p}{12}\rfloor + \varepsilon$ for $\varepsilon \in \{0,1,2\}$ \cite[Thm~V.4.1]{AEC} and the supersingular $j$-invariant polynomial is thus a polynomial of degree $\lfloor\frac{p}{12}\rfloor$.

\subsubsection{Timing data}

 We expected the BSF algorithm to be faster than the supersingular $j$-polynomial one, since the latter must factoring a polynomial of degree $(p-1)/2$. To experimentally verify the difference in running time, we used both algorithms to find the $2$-isogenous conjugate pairs for 37 primes between 103 and 95471. The resulting data is displayed in Figure \ref{fig:timing}. 
 \begin{figure}[ht!]
    \centering
    \includegraphics[scale=0.65]{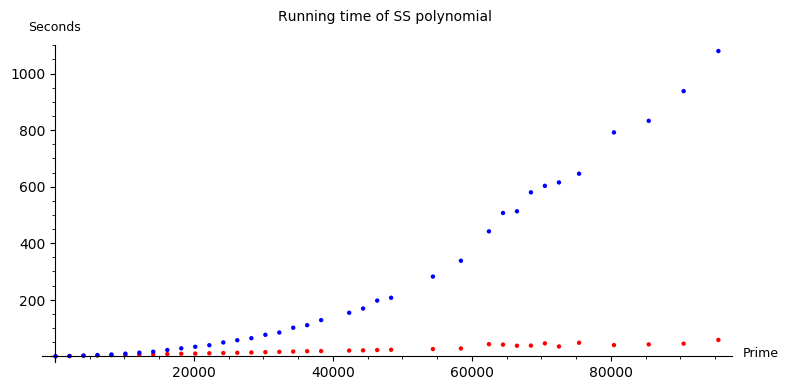}
    \caption{Timing data for the BFS (red) and Supersingular polynomial algorithms (blue), showing the time it took each algorithm to find all $2$-isogenous conjugate pairs for that prime.}
    \label{fig:timing}
\end{figure}

\subsection{Experimental data: $2$-isogenies}
\label{Section:ExpData2IsogenousConjs}

We collected data on supersingular $j$-invariants over $\mathbb{F}_{p^2}$  for all primes $5\leq p\leq 100193$ (a total of 9,605 primes). For each $p$, we collected all of the $\Fptwo\setminus\Fp$ $j$-invariants and counted those that are also $2$-isogenous. 
The plot shown in Figure \ref{fig:2IsogAll} shows the proportion of conjugate pairs that are $2$-isogenous.

\begin{figure}[h!]
    \centering
    \includegraphics[scale=0.65]{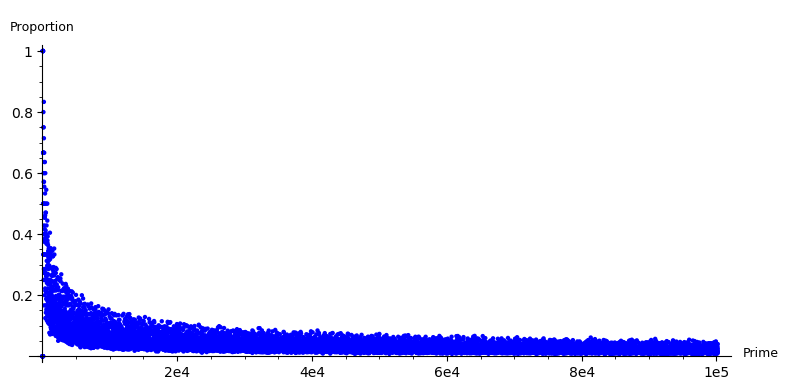}
    \caption{Proportion of $2$-Isogenous Pairs of Conjugate $j$-Invariants in $\FpBarGraph$. Points are of the form $(p,y)$ where $p$ is a prime and $y$ is the proportion of conjugate pairs of $j$-invariants which are $2$-isogeneous.}
    \label{fig:2IsogAll}
\end{figure}

With a few exceptions, all of the proportions computed are positive and strictly less than 1. The small primes (roughly $p<5000$) have a wide range of proportions, between $0$ and $1$. This is expected due to the small number of points on their $\FpBarGraph$ graphs. For example: there are some primes $p$ such that all of the pairs of conjugates are $2$-isogenous. On the other hand, if $\Fptwo\setminus\Fp = \emptyset$,  which can happen for small primes, then the proportion will be trivially zero. Notably, the only examples of $p$ for which the proportion is zero are $p = 101,131$. 

To avoid small prime phenomena, we focused on analyzing the data we collected for $10007\leq p \leq 100193$ (a total of 8378 primes). When referring to this data, we will use the phrase ``main data''. When referring to all of the data collected for $5\leq p \leq 100193$, we use the phrase ``all data''. 

The graph of proportions for the main data can be found in Figure \ref{fig:2IsogMain}.
\begin{figure}[h!]
    \centering
    \includegraphics[scale=0.65]{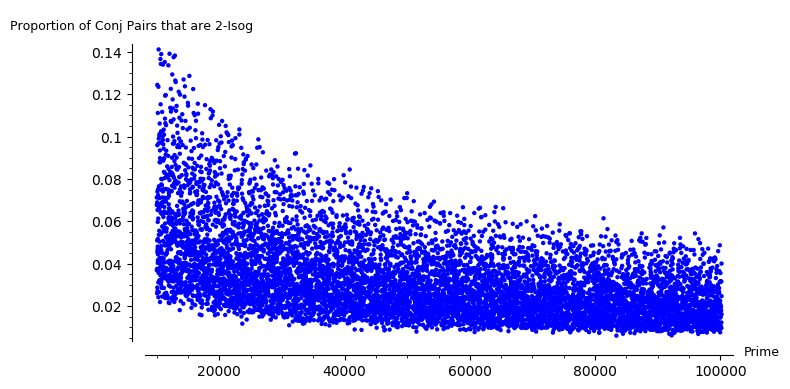}
    \caption{Proportion of 2-isogenous conjugate pairs in $\FpBarGraphtwo$ for $p>10000$}
    \label{fig:2IsogMain}
\end{figure}

In this collection of data, for primes $10007\leq p \leq 100193$, we found there to be a mean proportion of $0.032780$ with standard deviation of $0.019134$.

We then sorted the data by congruence conditions to look for patterns. The biggest difference appeared when we re-sorted the data according to the congruence class of the primes modulo $12$.

\subsubsection{Primes Modulo $12$} \label{sec:conjugate_pairs_mod_12}
In Table \ref{Tab:2IsogMod12}, we summarize the differences between the different congruence classes modulo 12. Note the similar, higher means for $p\equiv 1,7\mod{12}$ and the similar, lower means for $p\equiv 5,11\mod{12}$.

\begin{table}[h!]
\begin{center}
\begin{tabular}{|l|l|l|}
\hline
                    & $p \equiv 1\pmod{12}$ & $p \equiv 5\pmod{12}$  \\ \hline
Total \# of primes: & 2079                  & 2104                   \\ \hline
Mean:               & 0.043551    & 0.021969     \\ \hline
Standard Deviation: & 0.019815    & 0.010206     \\ \hline
                    &                       &                        \\ \hline
                    & $p \equiv 7\pmod{12}$ & $p \equiv 11\pmod{12}$ \\ \hline
Total \# of primes: & 2101                  & 2094                   \\ \hline
Mean:               & 0.043375    & 0.022244     \\ \hline
Standard Deviation: & 0.020140    & 0.010512     \\ \hline
\end{tabular}
\caption{Proportions of 2-isogenous conjugates, $10007\leq p \leq 100193$, sorted by $p\mod{12}$}
\label{Tab:2IsogMod12}
\end{center}
\end{table}

These distributions are skewed according to the congruence class, as we can also see from the graph in Figure \ref{fig:2IsogMainMod12}.

\begin{figure}[h!]
    \centering
    \includegraphics[scale=0.65]{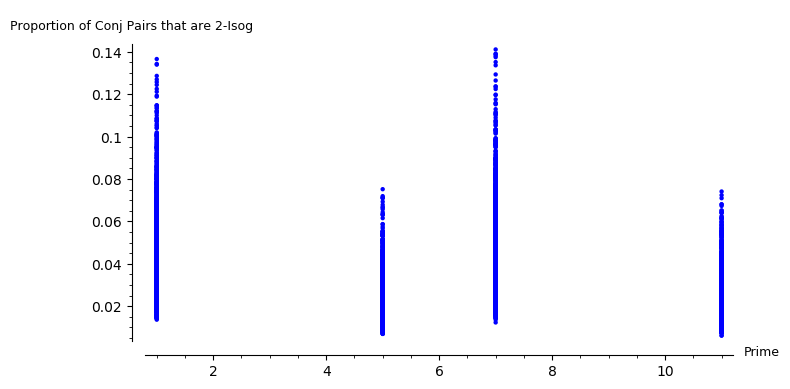}
    \caption{Proportions of 2-isogenous conjugates, $10007\leq p \leq 100193$, sorted by $p\mod{12}$}
    \label{fig:2IsogMainMod12}
\end{figure}

There appears to be a correlation between primes $p\equiv 1,7\mod{12}$ and between primes $p\equiv 5,11\mod{12}$. A two-sample $t$-test confirms these correlations at the $99.8\%$ level.

\subsection{Experimental data: $3$-isogenies}
We collected data on the supersingular $j$-invariants over $\mathbb{F}_{p^2}$  for all the primes $5\leq p\leq 100193$ (a total of 9,605 primes) and computed the proportion of conjugate pairs that are also $3$-isogenous. We present this data in the same format as the $2$-isogeny data presented in \ref{Section:ExpData2IsogenousConjs}.

In Figure \ref{fig:3IsogAll}, observe the proportions of conjugate pairs of primes for all of the primes we collected data on. 
\begin{figure}
    \centering
    \includegraphics[scale=0.65]{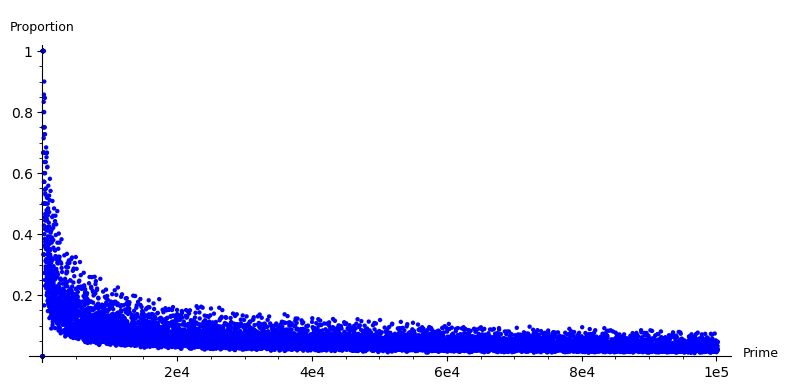}
    \caption{Proportion of $3$-Isogenous Pairs of Conjugate $j$-Invariants in $\FpBarGraph$}
    \label{fig:3IsogAll}
\end{figure}

Again, we observe some small prime phenomena (proportions of 1 and 0 for $p$ small). However, in the $3$-isogeny case we do not have nontrivial examples of primes $p$ for which the proportion of $3$-isogenous conjugates is 0: if there exist conjugate $j$-invariants in $\mathbb{F}_{p^2}\setminus\Fp$, then there is at least one pair of $3$-isogenous conjugates. (Recall that the two counterexamples to this statement in the $2$-isogeny case were $p=101,131$.)

To avoid small prime phenomena, we again focused on analyzing the data we collected for $10007\leq p \leq 100193$ (a total of 8378 primes). Again, when referring to this data, we will use the phrase ``main data''. When referring to all of the data collected for $5\leq p \leq 100193$, we use the phrase ``all data''. 

The graph of proportions for the main data can be found in Figure \ref{fig:3IsogMain}.
\begin{figure}
    \centering
    \includegraphics[scale=0.65]{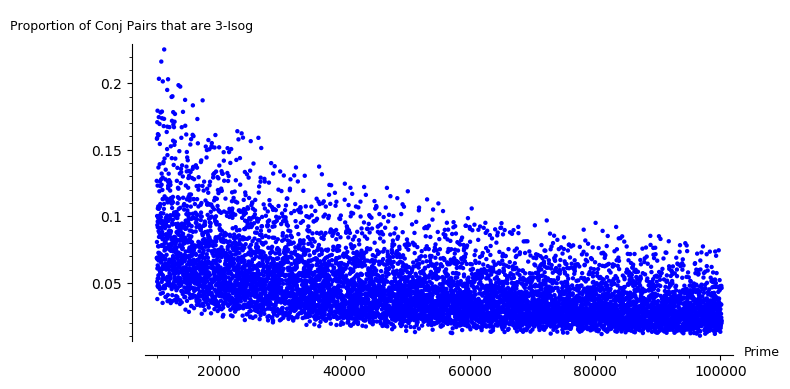}
    \caption{Proportion of 3-isogenous conjugate pairs in $\FpBarGraph$ for primes $p>10000$}
    \label{fig:3IsogMain}
\end{figure}
In this collection of data, for primes $10007\leq p \leq 100193$, we found there to be a mean proportion of $0.047306$ with standard deviation of $0.026568$.

As in the $2$-isogeny case, we again sorted the data by congruence conditions to look for patterns. The biggest difference appeared when we re-sorted the data according to the congruence class of the primes modulo $12$.

\subsubsection{Primes Modulo $12$}

In Table \ref{Tab:3IsogMod12}, we summarize the differences between the different congruence classes modulo 12. Note the similar and higher means for $p\equiv 1,5\mod{12}$ and the similar and lower means for $p\equiv 7,11\mod{12}$.

\begin{table}[h]
\centering
\begin{tabular}{|l|l|l|}
\hline
                    & $p \equiv 1\pmod{12}$ & $p \equiv 5\pmod{12}$  \\ \hline
Total \# of primes: & 2079                  & 2104                   \\ \hline
Mean:               & 0.058526    & 0.059034     \\ \hline
Standard Deviation: & 0.029488    & 0.029729     \\ \hline
                    &                       &                        \\ \hline
                    & $p \equiv 7\pmod{12}$ & $p \equiv 11\pmod{12}$ \\ \hline
Total \# of primes: & 2101                  & 2094                   \\ \hline
Mean:               & 0.035620    & 0.036107     \\ \hline
Standard Deviation: & 0.016369    & 0.016706     \\ \hline
\end{tabular}
\caption{Proportions of $3$-isogenous conjugates for $10007\leq p \leq 100193$, sorted by $p\mod{12}$}
\label{Tab:3IsogMod12}
\end{table}

These distributions are skewed according to the congruence class, as we can also see from the graph in Figure \ref{fig:3IsogMainMod12}.

\begin{figure}[h]
    \centering
    \includegraphics[scale=0.65]{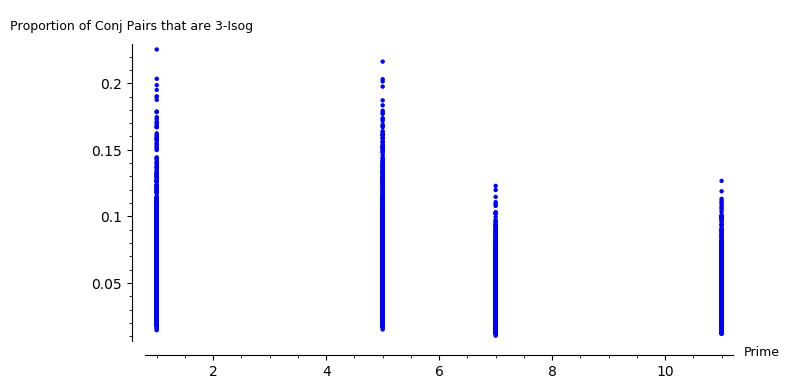}
    \caption{Proportions of 3-isogenous conjugates for $10007\leq p \leq 100193$, sorted by $p\mod{12}$}
    \label{fig:3IsogMainMod12}
\end{figure}

There appears to be a correlation between primes $p\equiv 1,5\mod{12}$ and between primes $p\equiv 7,11\mod{12}$. A two-sample $t$-test confirms these correlations at the $99.8\%$ level.

\subsection{Analysis of data}

Our experimental data suggests that, at least for $\ell=2,3$ and with the exception of a few small primes, the proportion of conjugate pairs that are $\ell$-isogenous is a small positive number. In particular, all of the primes $p\neq 101,131$ with supersingular $j$-invariants in $\mathbb{F}_{p^2}\setminus\mathbb{F}_p$ observed have at least one such pair. This motivates the following two questions:

\textbf{Question 2:}  \textit{For $p>131$, is there always at least one pair of $\ell$-isogenous conjugate $j$-invariants on $\FpBarGraph$?}\\
\indent \textbf{Question 3:} \textit{For large p, is there a nontrivial lower and/or upper bound for the proportion of $\ell$-isogenous conjugate $j$-invariants on $\FpBarGraph$?}\\

There is a significant difference on the average of the proportion $\ell$-isogenous conjugate pairs when we look at the congruence class of modulo 12. We see that this number tends to be smaller when $p\equiv 5,11\pmod {12}$ than when $p\equiv 1,7\pmod {12}$.  

\textbf{Question 4:} \textit{How does the proportion of $\ell$-isogenous conjugate $j$-invariants on $\FpBarGraph$ relate to the conjugacy class of $p$ $\mod$ 12?}

\section{Diameter}

Numerical experiments in \cite{Sardari} estimated the diameters of $k$-regular LPS Ramanujan graphs and random Cayley graphs to be asymptotically $(4/3)\log_{k-1}n$ and $\log_{k-1}n$ respectively, where $n$ is the number of vertices. 
In this section, we present data on the diameters of the supersingular $2$-isogeny graphs, which are $3$-regular on approximately $p/12$ vertices (precisely $\lfloor p/12\rfloor + 0,1$ or $2$ vertices, depending on $p$). 

We can see a lower bound 
$$\log_2 \left(\lfloor \frac{p}{12} \rfloor\right) - \log_2(3) + 1 $$
on the diameter as follows.
Starting from a random vertex and taking a walk of length $n$, the walk reaches at most $3 \cdot 2^{n-1}$ vertices as endpoints (exactly that number if there are no collisions).  Since there are 
$\lfloor \frac{p}{12} \rfloor + \epsilon$ vertices in the graph, with 
$\epsilon = 0,1,2$,
the diameter cannot be less that the smallest $n_0$ such that 
$$3 \cdot 2^{n_0-1} \geq  \lfloor \frac{p}{12} \rfloor.$$
This lower bound is shown in green in Figure~\ref{fig:DiametersAll} below.

Our numerical data suggests the diameter of the supersingular $2$-isogeny graph do \textit{not} grow like $(4/3)\log_{2}(p/12)$, contrary to the behaviour of LPS graphs. This can be seen from the blue line in Figure \ref{fig:DiametersAll}, which has been shifted vertically to fit the data as well as possible, but has too large a slope to match the shape of the distribution. We found $O(\log_{2}(p/12))$ (the red line in Figure \ref{fig:DiametersAll}) to be a better fit, suggesting the $2$-isogeny graph might behave more like random Cayley graphs.

We collected graph data for the diameters of the supersingular $2$-isogeny graphs $\mathcal{G}_2(\overline{\mathbb{F}}_p)$ for 3387 primes $p$. We used the built-in Sage "diameter" function (\cite{sage}) on graphs.  The implementation can be found in the walk.sage worksheet available on our github repository. We collected the data in batches, taking snapshots of the possible diameters of $\FpBarGraphtwo$ for ranges of primes. The smallest prime we have data for is $p = 1009$ and the largest is $p = 4010173$. This data is displayed in figure \ref{fig:DiametersAll}.

\begin{figure}[h!]
    \centering
    \includegraphics[scale=0.45]{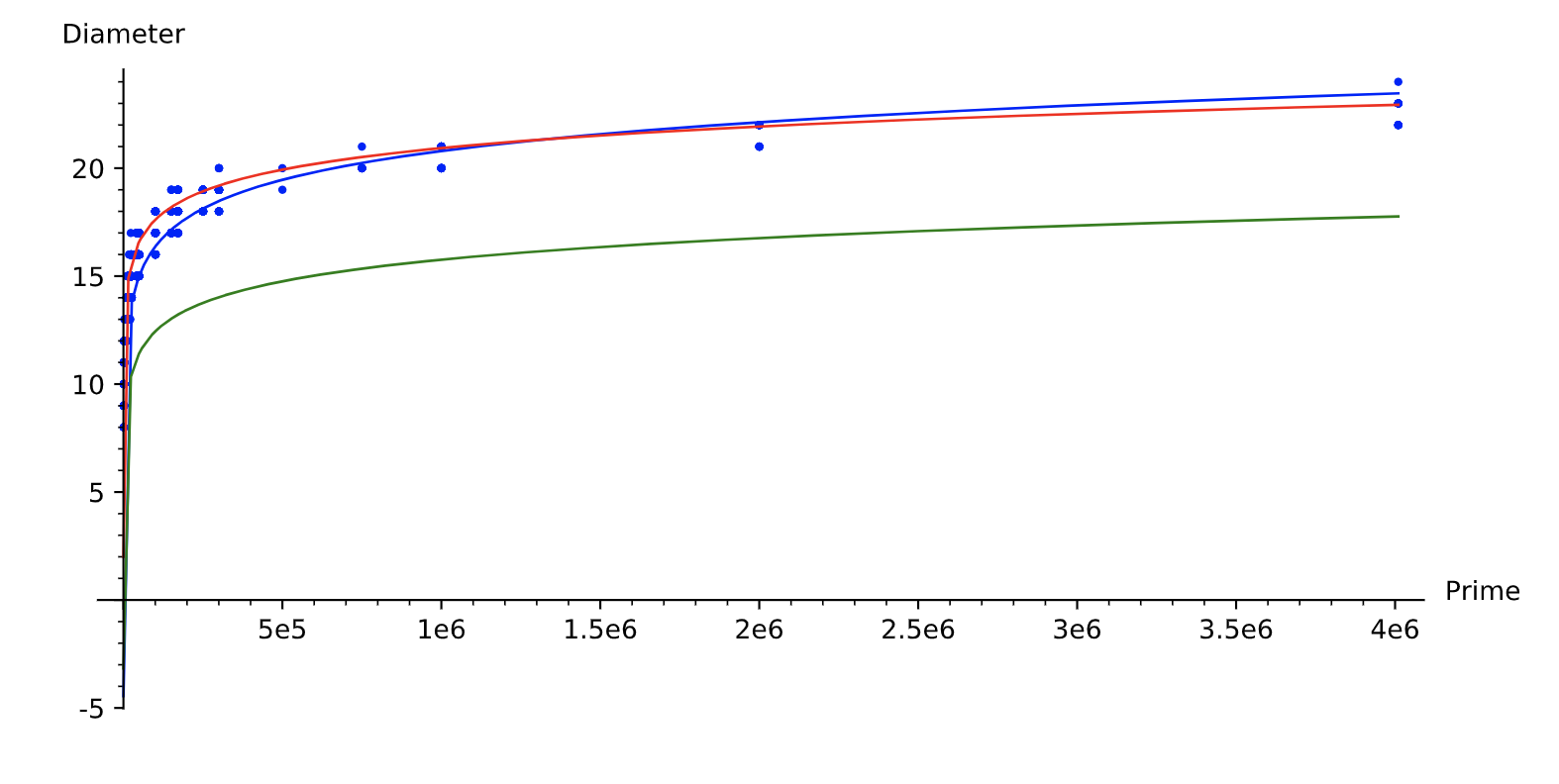}
    \caption{Diameters of $2$-isogeny graph over $\overline{\mathbb{F}}_p$, with $y = \log_2(p/12) + \log_2(12) + 1$ (red) and $y = \frac{4}{3}\log_2(p/12)-1$ (blue).}
    \label{fig:DiametersAll}
\end{figure}

\begin{figure}[h!]
    \centering
    \includegraphics[scale=0.55]{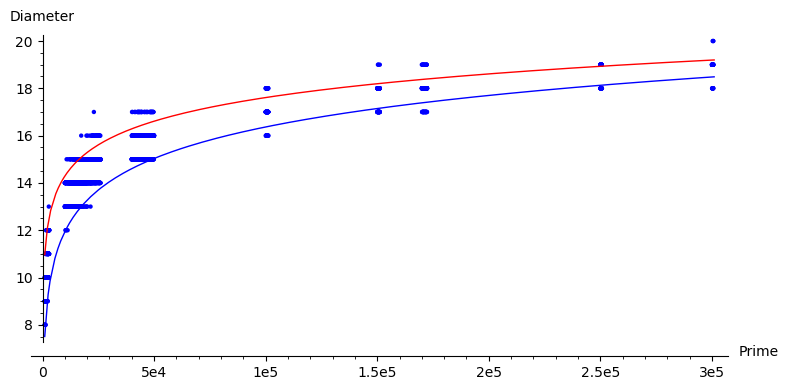}
    \caption{Cropped and enlarged graph of Figure~\ref{fig:DiametersAll}, for the data collected on 3313 primes $p$ with $1009 \leq p \leq 300361$.}
    \label{fig:DiametersZoomedIn}
\end{figure}

\subsection{Diameters of Primes Modulo 12} \label{subsec:diameters_mod_12}
Recall that the number of spinal components and $2$-isogenous conjugate pairs is dependent on the congruence class of $p$ modulo $8$. Motivated by this, we investigated the behaviour of the diameter as $p$ varies modulo $8$. We found a slight, but noticeable, bias for primes congruent to $5$ and $11$ modulo $12$ to have a $2$-isogeny graph of larger diameter compared with primes congruent to $1$ or $7$ modulo $12$. 

This is visible in Figures \ref{fig:Diameters1And7} and \ref{fig:Diameters5And11}. Notice that in Figure \ref{fig:Diameters5And11}, the scatter plot points tend to be slightly higher than the graph of $y = \log_2(p/12)+\log_2(12)+1$, whereas those in Figure~\ref{fig:Diameters1And7} tend to be more evenly distributed above and below $y = \log_2(p/12)+\log_2(12)+1$. 

\begin{figure}[h!]
    \centering
    \includegraphics[scale=0.55]{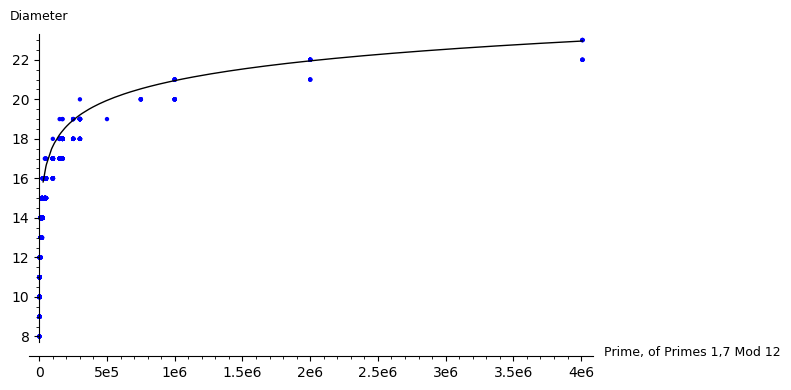}
    \caption{Diameters of $\FpBarGraphtwo$ for $p\equiv 1,7\mod{12}$, with $y =\log_2(p/12) + +\log_2(12)+1$}
    \label{fig:Diameters1And7}
\end{figure}

\begin{figure}[H]
    \centering
    \includegraphics[scale=0.55]{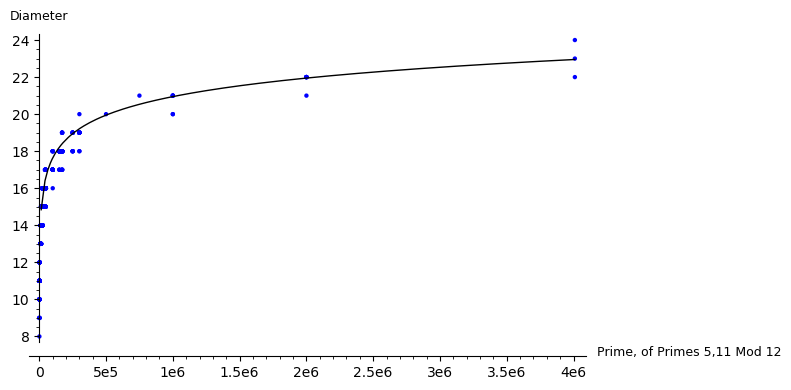}
    \caption{Diameters of $2$-isogeny graph over $\overline{\mathbb{F}}_p$, for $p\equiv 5,11\mod{12}$, with $y = \log_2(p/12)++\log_2(12)+1$}
    \label{fig:Diameters5And11}
\end{figure}

Table \ref{tab:diameter_difference_mod_12} confirms the visible bias.
\begin{table}[htbp]
    \centering
    \begin{tabular}{c|c|c|c}
 \multicolumn{4}{c}{ average diameter for $100,000 < p < 300,000$ } \\ 
 \hline $ 1 \mod 12 $ & $17.2190476190476 $  & $ 5 \mod 12 $ & $17.8761061946903$ \\
 $ 7 \mod 12 $ & $17.7346938775510$ & 
 $ 11 \mod 12 $ & $17.9919354838710$  \\
 \hline  \multicolumn{4}{c}{ average diameter for $300,000 < p < 500,000$  } \\  
 \hline    $ 1 \mod 12 $ & $18.4000000000000 $ & 
 $ 5 \mod 12 $ & $18.9230769230769$  \\ 
 $ 7 \mod 12 $ & $18.8235294117647$ &
 $ 11 \mod 12 $ & $19.1000000000000$ \\
    \end{tabular}
    \caption{Average diameters sorted by primes modulo $12$. The first data set contains around $100$ primes in each congruence class, the latter between $10 $ to $17$ primes.}
    \label{tab:diameter_difference_mod_12}
\end{table}

\section{Conclusions}

We determined how the connected components of $\FpGraph$ merge together to give the spine $\FpSubGraph \subset \FpBarGraph$. For any specific $\ell$ and any $p$, one can determine the resulting shape explicitly if one knows the structure of the class group $\Cl(\OO_K)$. 

For $\ell =2$, we gave heuristics on the distances of the connected components of $\FpSubGraph$, paths that pass through the spine, the proportion of conjugate pairs, and the diameters of graphs $\FpBarGraphtwo$.
We saw differences between the congruence classes modulo $12$. In summary, the data suggests the following, although more careful analysis is needed to confirm:
\begin{itemize}
    \item $p\equiv 1,7\mod{12}$: 
    \begin{itemize}
        \item smaller $2$-isogeny graph diameters
        \item larger number of spinal components
        \item larger proportion of $2$-isogenous conjugate pairs
    \end{itemize}
    
    \item $p\equiv 5,11\mod{12}$: 
    \begin{itemize}
        \item larger $2$-isogeny graph diameters
        \item smaller number of spinal components
        \item smaller proportion of $2$-isogenous conjugate pairs
    \end{itemize}
\end{itemize}

\bibliographystyle{alpha}
\bibliography{biblio}\vspace{0.75in}

\end{document}